%------------------------------------------------
% 
%------------------------------------------------
\documentclass[10pt,twoside]{amsart}
\usepackage{amsmath}
\usepackage{amssymb}
\usepackage[arrow, matrix, curve]{xy}
\usepackage{hyperref}

\numberwithin{equation}{section}
\numberwithin{table}{section}

\theoremstyle{plain}

\newtheorem{thm}{Theorem}[section]
\newtheorem{thm*}{Theorem}
\newtheorem{lem}[thm]{Lemma}
\newtheorem{prop}[thm]{Proposition}

\theoremstyle{definition}

\newtheorem{defi}[thm]{Definition}
\newtheorem{exa}[thm]{Example}
\newtheorem{setup}[thm]{Setup}

\newtheorem{rem}[thm]{Remark}
\newtheorem{con}[thm]{Construction}

%------------------------------------------------
%
%             Symbols in "Fraktur"
%
%------------------------------------------------
\newcommand{\mm}{\mathfrak m}
%------------------------------------------------
%
%            Symbols in "mathbb"
%
%------------------------------------------------
\newcommand{\C}{\mathbb{C}}
\newcommand{\D}{\mathbb{D}}
\newcommand{\I}{\mathbb{I}}
\newcommand{\N}{\mathbb{N}}
\newcommand{\Obb}{\mathbb{Obb}}
\newcommand{\T}{\mathbb{T}}
\newcommand{\Z}{\mathbb{Z}}
%------------------------------------------------
%
%            Symbols in "mathcal"
%
%------------------------------------------------
\newcommand{\Ac}{\mathcal{A}}
\newcommand{\Dc}{\mathcal{D}}
\newcommand{\Ec}{\mathcal{E}}
\newcommand{\Fc}{\mathcal{F}}
\newcommand{\Ic}{\mathcal{I}}
\newcommand{\Mcc}{\mathcal{M}}
\newcommand{\Nc}{\mathcal{N}}
\newcommand{\Lc}{\mathcal{L}}
\newcommand{\Oc}{\mathcal{O}}
\newcommand{\Rc}{\mathcal{R}}
\newcommand{\Sc}{\mathcal{S}}
\newcommand{\Tc}{\mathcal{T}}

%------------------------------------------------
%
%           Buchstaben für Mathemodus
%
%------------------------------------------------

\newcommand{\Th}{\text{h}}
\newcommand{\TF}{\text{F}}

%------------------------------------------------
%
%           Math-Names
%
%------------------------------------------------

\DeclareMathOperator{\chara}{char}
\DeclareMathOperator{\dirsum}{\oplus}
\DeclareMathOperator{\Ima}{im}
\DeclareMathOperator{\Ker}{ker}
\DeclareMathOperator{\coKer}{coker}
\DeclareMathOperator{\Dim}{dim}
\DeclareMathOperator{\Det}{det}
\DeclareMathOperator{\Spec}{Spec}
\DeclareMathOperator{\lK}{H}
\DeclareMathOperator{\Syz}{Syz}
\DeclareMathOperator{\Proj}{Proj}
\DeclareMathOperator{\Deg}{deg}
\DeclareMathOperator{\SL}{SL}
\DeclareMathOperator{\HKF}{HK}
\DeclareMathOperator{\HKM}{e_{HK}}
\DeclareMathOperator{\FS}{F_{sign}}
\DeclareMathOperator{\Mat}{Mat}

\newcommand{\qpot}{^{[q]}}
\newcommand{\pb}{^{\ast}}
\newcommand{\fpb}[1]{\text{F}^{#1\ast}}
\newcommand{\inj}{\hookrightarrow}
\newcommand{\ra}{\rightarrow}
\newcommand{\lra}{\longrightarrow}
\newcommand{\trans}{^{\text{T}}}
\newcommand{\dimglo}[1]{\text{h}^0(#1)}
\newcommand{\punctured}[2]{\Spec(#1)\setminus\{#2\}}

\setlength{\parindent}{0pt}

\begin{document}

\subjclass[2010]{
13A35, %char p
13A50, %invariantentheorie
13C14, %CM moduln
13D02, %Syzygien
13D40, %hk und hs
14J17, %Sing auf flächen
14J60, %VB auf flächen
20G05  %Represent th
}

\keywords{ADE singularity, Hilbert-Kunz function, vector bundle, Frobenius, Hilbert-series, syzygy module, matrix factorization, maximal Cohen-Macaulay}

\author{Daniel Brinkmann}

\address{Institut f\"ur Mathematik \\ Universit\"at Osnabr\"uck}

\email{dabrinkm@uos.de}

\title{The Hilbert-Kunz functions of two-dimensional rings of type ADE}

\date{\today}

\begin{abstract}
We compute the Hilbert-Kunz functions of two-dimensional rings of type ADE by using representations of their indecomposable, maximal 
Cohen-Macaulay modules in terms of matrix factorizations, and as first syzygy modules of homogeneous ideals.
\end{abstract}

\maketitle

\section*{Introduction}
The central objects in this paper will be the two-dimensional \textit{rings of type ADE} (or $ADE$-rings for short), namely
\begin{itemize}
\item $A_n:=k[X,Y,Z]/(X^{n+1}-YZ)$ with $n\geq 0$,
\item $D_n:=k[X,Y,Z]/(X^2+Y^{n-1}+YZ^2)$ with $n\geq 4$,
\item $E_6:=k[X,Y,Z]/(X^2+Y^3+Z^4)$,
\item $E_7:=k[X,Y,Z]/(X^2+Y^3+YZ^3)$ and
\item $E_8:=k[X,Y,Z]/(X^2+Y^3+Z^5)$,
\end{itemize}
as well as their $(X,Y,Z)$-adic completions, where $k$ denotes an algebraically closed field. These rings were studied by several authors, e.g. Klein (in \cite{klein}), du Val (in \cite{duval1}, \cite{duval2}, \cite{duval3}), Brieskorn (in 
\cite{brieskorn}) or Artin (in \cite{artin1}, \cite{artin2}), where they appeared in various forms, e.g as quotient singularities or rational 
double points. They also appear in string theory (cf. \cite{string} for a survey).

The goal is to compute (in positive characteristic) their \textit{Hilbert-Kunz functions}
$$e\mapsto \Dim_k\left(k[X,Y,Z]/\left(F,X^{p^e},Y^{p^e},Z^{p^e}\right)\right),$$
where $F$ denotes one of the defining polynomials above. Note that the (non-local) rings of type $ADE$ are homogeneous with respect to some positive grading.

Recall that the rings $\C[X,Y,Z]/(F)$ of type $ADE$ appear as rings of invariants of $\C[x,y]$ by the actions of the finite subgroups of $\SL_2(\C)$. 
The groups corresponding to the singularities of type $A_n$, $D_n$, $E_6$, $E_7$ resp. $E_8$ are the cylic group with $n+1$ elements, the binary 
dihedral group $\D_{n-2}$ of order $4n-8$, the binary tetrahedral group $\T$ of order 24, the binary octahedral group $\Obb$ of order 48 resp. the binary 
icosahedral group $\I$ of order 120. If $k$ is algebraically closed of characteristic $p>0$ the groups above can be viewed as finite subgroups of $\SL_2(k)$, provided their order 
is invertible in $k$. In these cases $k[X,Y,Z]/(F)$ is again the ring of invariants of $k[x,y]$ under the action of the corresponding group 
(cf. \cite[Chapter 6, \S 2]{leuwie}). Using this fact, Watanabe and Yoshida showed in \cite{watyo1} that the Hilbert-Kunz multiplicity of rings of type 
$ADE$ is given by $2-\tfrac{1}{|G|}$, provided the order of the corresponding group $G$ is invertible modulo $p$. Moreover, by a result of Brenner 
(cf. \cite{bredim2func}) the Hilbert-Kunz functions are of the form
$$e\mapsto \HKM\cdot p^{2e}+\gamma(p^e),$$
where $\gamma$ is an eventually periodic function. For example, the Hilbert-Kunz function of a ring of type $A_n$ is due to Kunz 
(cf. \cite[Example 4.3]{kunz2}) given by
\begin{equation*}
e\mapsto \left(2-\frac{1}{n+1}\right)p^{2e}-r+\frac{r^2}{n+1},
\end{equation*}
where $r$ is the smallest non-negative representative of the class of $p^e$ modulo $n+1$. We also want to mention that the Hilbert-Kunz functions of rings 
of type $E_6$ and $E_8$ can be computed with the algorithm of Han and Monsky (cf. \cite{monskyhan}), provided the characteristic is known.

If $R$ denotes an $ADE$-ring, the flat map 
$$\bar{\phi}:R\ra k[U,V,W]/(\phi(F)),X\mapsto U^{\Deg(X)},Y\mapsto V^{\Deg(Y)},Z\mapsto W^{\Deg(Z)},$$
will allow us to switch between the non-standard graded situation we are interested in and a standard-graded situation. Explictly, the corresponding 
Hilbert-Kunz functions in these two cases are related in the following way (cf. \cite[Remark 1.22]{drdaniel})
\begin{align*}
 & \dim_k\left(R/\left(X^{p^e},Y^{p^e},Z^{p^e}\right)\right)\\
=& \frac{\dim_k\left(k[U,V,W]/\left(\phi(F),U^{\Deg(X)\cdot p^e},V^{\Deg(Y)\cdot p^e},W^{\Deg(Z)\cdot p^e}\right)\right)}{\Deg(X)\cdot\Deg(Y)\cdot\Deg(Z)}.
\end{align*}
This makes it possible to use strong tools on both sides. Namely, in the non-standard graded situation, the $ADE$-rings are 
\textit{graded Cohen-Macaulay finite} (cf. \cite{knoerrer} and \cite{buchmcm}), meaning that there are up to isomorphism 
and to degree shifts only finitely many indecomposable, graded, maximal Cohen-Macaulay modules. In the standard-graded case we can use the geometric interpretation 
of Hilbert-Kunz functions due to Brenner and Trivedi (cf. \cite{bredim2} resp. \cite{tri1}) and the theory of vector bundles.

To be more precise, instead of computing the Hilbert-Kunz functions of the rings $R$ of type $ADE$ (with respect to the ideal $(X,Y,Z)$), we might 
compute the Hilbert-Kunz function of $S:=k[U,V,W]/(\phi(F))$ with respect to the monomial ideal $I:=(U^{\Deg(X)},V^{\Deg(Y)},W^{\Deg(Z)})$. For this purpose, 
we need to control the Frobenius pull-backs of $\Syz_{\Proj(S)}(I)$. The idea for this is the following. Since $R$ is graded Cohen-Macaulay finite 
and $\Syz_R(X,Y,Z)$ as well as all $\Syz_R(X^{p^e},Y^{p^e},Z^{p^e})$, $e\geq 1$, are maximal Cohen-Macaulay there has to be an eventually periodicity 
in the sense that the isomorphism class of $\Syz_R(X^{p^e},Y^{p^e},Z^{p^e})$, $e\gg 0$, depends only on the residue class of $p^e$ modulo a certain number. This 
periodicity of the $R$-modules $\Syz_R(X^{p^e},Y^{p^e},Z^{p^e})$ translates via $\bar{\phi}$ to a periodicity of the $S$-modules 
$\Syz_S(U^{\Deg(X)p^e},V^{\Deg(Y)p^e},W^{\Deg(Z)p^e})$, which makes it possible to compute the Hilbert-Kunz function of $S$ with respect to $I$.

In order to detect this periodicity, we have to find a full list of representatives of the isomorphism classes of the indecomposable, maximal Cohen-Macaulay 
$R$-modules as well as an (easy computable) invariant that allows us to distinguish between these isomorphism classes.

The full list of representatives of the isomorphism classes of the indecomposable, maximal Cohen-Macaulay $R$-modules should consist of first syzygy 
modules of ideals. This is done with the help of Eisenbud's theory of \textit{matrix factorizations}, which encodes maximal Cohen-Macaulay modules over hypersurface rings 
as a pair of matrices. We will see how one might translate a matrix factorization into a first syzygy module of an ideal. This will give the desired 
list of first syzygy modules of ideals. For example in the case of a ring of type $E_8$, we obtain the following theorem.

\begin{thm*}
Let $R:=k[X,Y,Z]/(X^2+Y^3+Z^5).$ The pairwise non-isomorphic modules 
\begin{align*}
M_1 &= \Syz_R(X,Y,Z),\\
M_2 &= \Syz_R(X,Y^2,YZ,Z^2),\\
M_3 &= \Syz_R(XY,XZ,Y^2,YZ^2,Z^3),\\
M_4 &= \Syz_R(XY,XZ^2,Y^3,Y^2Z,YZ^3,Z^4),\\
M_5 &= \Syz_R(XY^2,XYZ^2,XZ^4,Y^4,Y^3Z,Y^2Z^3,Z^5),\\
M_6 &= \Syz_R(X,Y^2,YZ,Z^3),\\
M_7 &= \Syz_R(XY,XZ,Y^2,YZ^2,Z^4),\\
M_8 &= \Syz_R(X,Y,Z^2)
\end{align*}
give a complete list of representatives of the isomorphism classes of indecomposable, non-free, maximal Cohen-Macaulay modules. 
\end{thm*}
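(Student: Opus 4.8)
The plan is to exploit Eisenbud's correspondence between the maximal Cohen-Macaulay (MCM) modules over the hypersurface ring $R=k[X,Y,Z]/(F)$, $F=X^2+Y^3+Z^5$, and the reduced matrix factorizations of $F$ over $P:=k[X,Y,Z]$. Recall that a matrix factorization is a pair $(\phi,\psi)$ of square matrices over $P$ with $\phi\psi=\psi\phi=F\cdot\mathrm{Id}$, that $\coKer(\phi)$ is then an MCM $R$-module, and that this assignment sets up a bijection between the indecomposable non-free MCM $R$-modules and the indecomposable reduced matrix factorizations of $F$. Since $R$ is of type $E_8$ it is CM-finite, and the classification of the matrix factorizations of the simple (ADE) singularities (Kn\"orrer \cite{knoerrer}, Buchweitz--Greuel--Schreyer \cite{buchmcm}; see also \cite{leuwie}) provides exactly eight indecomposable non-free MCM modules, in bijection with the eight non-affine nodes of the $E_8$ Dynkin diagram. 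The first step is to record these eight matrix factorizations explicitly.

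Next I would verify that each $M_i$ is MCM and determine its rank. Every listed ideal $I_i$ is $\mm$-primary in $R$: this is clear for $I_1=(X,Y,Z)$ and $I_8=(X,Y,Z^2)$, while for the others one uses the relation $X^2=-Y^3-Z^5$ to place a power of each variable in $I_i$ (for instance $X^2\in I_3=(XY,XZ,Y^2,YZ^2,Z^3)$ because $Y^3=Y\cdot Y^2$ and $Z^5=Z^2\cdot Z^3$). From the short exact sequence $0\ra\Syz_R(I_i)\ra R^{n_i}\ra I_i\ra 0$, where $n_i$ is the number of generators, the depth lemma shows $\Syz_R(I_i)$ has depth $2$ and is therefore MCM, and a rank count gives $\mathrm{rk}\,M_i=n_i-1$. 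The eight ideals have $3,4,5,6,7,4,5,3$ generators, so the ranks are $2,3,4,5,6,3,4,2$; as a multiset this is $\{2,2,3,3,4,4,5,6\}$, which is exactly the multiset of Coxeter labels of the non-affine nodes of $E_8$ (and the rank data predicted for the indecomposable MCM modules by the McKay correspondence). This already serves as a strong consistency check.

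The core of the argument is the translation of each matrix factorization into the first syzygy module of the prescribed ideal. Given an indecomposable matrix factorization $(\phi_i,\psi_i)$ with cokernel $N_i=\coKer(\phi_i)$, whose minimal free resolution over $R$ is two-periodic, I would realize $N_i$ as a first syzygy of an ideal by identifying a cyclic module $R/I_i$ with $\Syz_R(I_i)=\Syz_R^2(R/I_i)\cong N_i$. Concretely, since $N_i$ is a second syzygy it embeds into a free module $R^{n_i}$ with rank-one torsion-free cokernel; choosing this embedding so that the cokernel is the ideal $I_i=(f_1,\dots,f_{n_i})$ of the statement, and reading off the induced relations using $F=0$, one recovers the matrix factorization $(\phi_i,\psi_i)$. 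Equivalently, one may start from the generators given in the statement, compute a presentation of $\Syz_R(I_i)$, and reduce it to $(\phi_i,\psi_i)$. Carrying this out for $i=1,\dots,8$ produces isomorphisms $M_i\cong N_i$ with the eight classified indecomposables.

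Once this matching is in place the remaining assertions follow formally: completeness holds because the eight matrix factorizations exhaust the indecomposable non-free MCM modules, so the isomorphic modules $M_1,\dots,M_8$ do as well, and in particular each $M_i$ is indecomposable and non-free. Pairwise non-isomorphism is immediate for modules of different rank, and for the three coincidences in rank (the rank-$2$ pair $M_1,M_8$, the rank-$3$ pair $M_2,M_6$, and the rank-$4$ pair $M_3,M_7$) it is guaranteed by the bijection, though one can also separate them by a finer invariant such as the minimal number of generators or the graded Hilbert series. The step I expect to be the main obstacle is precisely the explicit translation: for each of the eight matrix factorizations one must produce the concrete isomorphism onto the syzygy module of the stated ideal, by choosing the embedding into a free module and performing row and column operations over $P$ modulo $F$, and this bookkeeping must be done consistently across all cases---especially for $M_6,M_7,M_8$, whose ideals branch off the chain $M_1,\dots,M_5$ of strictly increasing rank and therefore have to be verified individually rather than read off from a uniform pattern.
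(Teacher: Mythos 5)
Your proposal follows essentially the same route as the paper: both invoke Eisenbud's correspondence together with the known classification of the indecomposable reduced matrix factorizations of the $E_8$ equation (from Kn\"orrer/Buchweitz--Greuel--Schreyer, as tabulated in \cite{leuwie}), and then translate each matrix factorization case by case into the first syzygy module of the stated ideal, with completeness and indecomposability inherited from the classification. The only cosmetic difference is that the paper's construction first realizes the \emph{dual} $M^{\vee}$ as $\Syz_U(f_1,\dots,f_{m+1})$ by deleting columns of $\psi$ and computing the kernel of $\psi^{J}$, and then uses self-duality of all $M_j$ in the $E_8$ case, whereas you describe the direct embedding $0\ra N_i\ra R^{n_i}\ra I_i\ra 0$; both correctly identify the explicit column-by-column bookkeeping as the substantive work, which the paper likewise defers to \cite[Chapter 3]{drdaniel}.
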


The second step is to find an invariant of modules which is easy to calculate and detects the isomorphism 
class of the module. Computing the Hilbert-series of the syzygy modules from our finite list, one recognizes that the Hilbert-series might be a good 
candidate for this invariant. But we will have two further problems, namely we need an algorithm to compute the Hilbert-series of 
$\Syz_R(X^{p^e},Y^{p^e},Z^{p^e})$, which depends on the parameters $p$ and $e$ and the second problem is that in the cases where $R$ is of type $D$ or $E$ 
the Hilbert-series does not detect whether a given module is indecomposable or not.

The problem that we need to know that the modules $\Syz_R(X^{p^e},Y^{p^e},Z^{p^e})$ are indecomposable is solved by using ringinclusions of the form 
$\iota: R\ra A_n$, where $R$ is of type $D$ or $E$. Translating everything to the corresponding punctured spectra $\Rc$ and $\Ac_n$, we can 
pull the $\Oc_{\Rc}$-modules in question back to $\Ac_n$ using the map induced by $\iota$. Since every $\Oc_{\Ac_n}$-module is a direct sum of line 
bundles and since we know how the Frobenius acts on line bundles, we will deduce from the commutativity of the pull-backs via the Frobenius and the map induced 
by $\iota$, that all $\Syz_R(X^{p^e},Y^{p^e},Z^{p^e})$ are indecomposable.

Last but not least we need to deal with the problem of computing the Hilbert-series of $\Syz_R(X^{p^e},Y^{p^e},Z^{p^e})$. To do so, we will prove 
the following theorem.

\begin{thm*}
Let $k$ be a field and $$R:=k[X,Y,Z]/(X^d-F(Y,Z))$$ with $\Deg(X)=\alpha$, $\Deg(Y)=\beta$, $\Deg(Z)=\gamma$ and $F\in k[Y,Z]$ homogeneous of degree 
$d\alpha$ ($d$, $\alpha$, $\beta$, $\gamma\in\N_{\geq 1}$). Let $a\in\N_{\geq 1}$ and write $a=d\cdot q +r$ with $0\leq r\leq d-1$ and $q\in\N$.
For $l\in\N$ we use the abbreviation $\Sc(l):=\Syz_R(X^l,Y^b,Z^c)$. Then the Hilbert-series of $\Sc(a)=\Syz_R(X^a,Y^b,Z^c)$ is given by
\begin{equation*}
\lK_{\Sc(a)}(t) = \frac{(t^{\alpha\cdot r}-t^{\alpha\cdot d})\cdot \lK_{\Sc(a-r)}(t)+(1-t^{\alpha\cdot r})\lK_{\Sc(a+d-r)}(t)}{1-t^{\alpha\cdot d}}.
\end{equation*}
\end{thm*}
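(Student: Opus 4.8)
The plan is to reduce the claimed identity for the Hilbert series of the syzygy modules to a purely formal identity among the Hilbert series of the Artinian residue rings $R/(X^{l},Y^{b},Z^{c})$, and then to compute the latter explicitly. Writing $I_{l}:=(X^{l},Y^{b},Z^{c})$, the defining presentation
$$0\lra \Sc(l)\lra R(-l\alpha)\dirsum R(-b\beta)\dirsum R(-c\gamma)\lra R\lra R/I_{l}\lra 0$$
of the syzygy module yields
$$\lK_{\Sc(l)}(t)=\left(t^{l\alpha}+t^{b\beta}+t^{c\gamma}-1\right)\lK_{R}(t)+\lK_{R/I_{l}}(t).$$
Substituting this for $l\in\{a,\,a-r,\,a+d-r\}$ into the asserted recursion, the terms carrying $\lK_{R}$ and the part carrying $(t^{b\beta}+t^{c\gamma}-1)\lK_{R}(t)$ cancel on both sides; this is the elementary identity $t^{a\alpha}(1-t^{\alpha d})=(t^{\alpha r}-t^{\alpha d})t^{(a-r)\alpha}+(1-t^{\alpha r})t^{(a+d-r)\alpha}$ together with $(t^{\alpha r}-t^{\alpha d})+(1-t^{\alpha r})=1-t^{\alpha d}$. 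Hence it suffices to prove the reduced identity
$$(1-t^{\alpha d})\lK_{R/I_{a}}(t)=(t^{\alpha r}-t^{\alpha d})\lK_{R/I_{a-r}}(t)+(1-t^{\alpha r})\lK_{R/I_{a+d-r}}(t).$$

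Next I would realize these residue rings over a convenient base. Since $R=k[X,Y,Z]/(X^{d}-F)$, one has $R/(Y^{b},Z^{c})=B:=\bar{P}[X]/(X^{d}-\bar{F})$, where $\bar{P}:=k[Y,Z]/(Y^{b},Z^{c})$ and $\bar{F}$ denotes the image of $F$; as a graded $\bar{P}$-module $B$ is free with basis $1,X,\dots,X^{d-1}$, the generator $X^{i}$ contributing the shift $t^{i\alpha}$. Because $R/I_{l}\cong B/X^{l}B$, the graded exact sequence
$$0\lra \operatorname{Ann}_{B}(X^{l})(-l\alpha)\lra B(-l\alpha)\xrightarrow{\;\cdot X^{l}\;}B\lra B/X^{l}B\lra 0$$
gives
$$\lK_{R/I_{l}}(t)=(1-t^{l\alpha})\lK_{B}(t)+t^{l\alpha}\lK_{\operatorname{Ann}_{B}(X^{l})}(t).$$

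The crux is therefore the computation of the annihilators $\operatorname{Ann}_{B}(X^{l})$, and this is where the relation $X^{d}=\bar{F}$ enters. Writing $a=dq+r$ and an element of $B$ as $\sum_{i=0}^{d-1}p_{i}X^{i}$ with $p_{i}\in\bar{P}$, I would use $X^{a}=\bar{F}^{q}X^{r}$ and expand $X^{a}\sum p_{i}X^{i}$, reducing every exponent $i+r\geq d$ via $X^{i+r}=\bar{F}X^{i+r-d}$. The key structural point is that exactly the top $r$ coefficients, those of $X^{i}$ with $i\geq d-r$, acquire an extra factor $\bar{F}$, so that $X^{a}\sum p_{i}X^{i}=0$ forces $p_{0},\dots,p_{d-r-1}\in A_{q}$ and $p_{d-r},\dots,p_{d-1}\in A_{q+1}$, where $A_{m}:=\operatorname{Ann}_{\bar{P}}(\bar{F}^{m})$. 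Consequently, setting $\sigma_{<d-r}:=\sum_{i=0}^{d-r-1}t^{i\alpha}$ and $\sigma_{\geq d-r}:=\sum_{i=d-r}^{d-1}t^{i\alpha}$, one obtains
$$\lK_{\operatorname{Ann}_{B}(X^{a})}(t)=\sigma_{<d-r}\,\lK_{A_{q}}(t)+\sigma_{\geq d-r}\,\lK_{A_{q+1}}(t),$$
whereas the two bounding exponents $a-r=dq$ and $a+d-r=d(q+1)$, both with vanishing remainder, give $\lK_{\operatorname{Ann}_{B}(X^{a-r})}=\sigma\,\lK_{A_{q}}$ and $\lK_{\operatorname{Ann}_{B}(X^{a+d-r})}=\sigma\,\lK_{A_{q+1}}$ with $\sigma:=\sum_{i=0}^{d-1}t^{i\alpha}$.

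Finally I would substitute these three expressions into the reduced identity. The coefficients of $\lK_{B}$, of $\lK_{A_{q}}$ and of $\lK_{A_{q+1}}$ each match separately, every matching reducing to the elementary geometric series identity $(1-t^{\alpha d})\sigma_{<d-r}=(1-t^{(d-r)\alpha})\sigma$ and its mirror for $\sigma_{\geq d-r}$, which follow from $\sigma=(1-t^{\alpha d})/(1-t^{\alpha})$. This establishes the reduced identity and hence the theorem. I expect the only genuine obstacle to be the annihilator computation, specifically keeping track of the wrap-around $X^{i+r}=\bar{F}X^{i+r-d}$ that splits the coefficients between $A_{q}$ and $A_{q+1}$; once that bookkeeping is done, the remainder is a formal manipulation of rational functions valid over an arbitrary field, consistent with the hypothesis on $k$.
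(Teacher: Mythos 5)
Your proof is correct, but it takes a genuinely different route from the paper. The paper's proof rests on Lemma \ref{ses}, a short exact sequence of syzygy modules
$0 \ra \Sc(a+d-2r)(-\alpha r) \ra \Sc(a-r)(-\alpha r)\dirsum\Sc(a+d-r) \ra \Sc(a) \ra 0$
(whose surjectivity is the nontrivial input, quoted from Miyaoka); applying it once to $a$ with remainder $r$ and once to $a':=a+d-2r$ with remainder $d-r$ gives two linear relations among the four Hilbert series, from which $\lK_{\Sc(a')}$ is eliminated to yield the stated formula. You instead never touch the module structure of the syzygies: you strip off the presenting sequences to reduce the identity to one among the series $\lK_{R/(X^l,Y^b,Z^c)}$, and then compute those directly from the fact that $B:=R/(Y^b,Z^c)$ is free over $\bar P:=k[Y,Z]/(Y^b,Z^c)$ on $1,X,\dots,X^{d-1}$, with $\operatorname{Ann}_B(X^{dq+r})$ splitting coefficientwise into copies of $\operatorname{Ann}_{\bar P}(\bar F^{q})$ and $\operatorname{Ann}_{\bar P}(\bar F^{q+1})$ according to whether the exponent wraps past $d$. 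I checked the bookkeeping: the wrap-around sends the index sets $\{r,\dots,d-1\}$ and $\{0,\dots,r-1\}$ to disjoint basis elements, so the annihilator condition does decouple as you claim, and the three coefficient matchings in the final substitution all reduce to geometric-series identities as stated. What your approach buys is self-containedness and an explicit formula for $\lK_{R/(X^a,Y^b,Z^c)}$ in terms of $\lK_{A_q}$ and $\lK_{A_{q+1}}$; what the paper's approach buys is a statement at the level of modules rather than Hilbert series, which is reusable elsewhere. One cosmetic remark: your reduction and the final identity are identities of formal power series with denominators $1-t^{\alpha d}$; since everything is an equality of honest power series after clearing denominators, this causes no difficulty.
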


By iteration this theorem reduces the task of computing the Hilbert-series of the modules $\Syz_R(X^{p^e},Y^{p^e},Z^{p^e})$ to the computation of the Hilbert-series of the 
modules $\Syz_R(X^{p^e-1},Y^{p^e},Z^{p^e})$ and $\Syz_R(X^{p^e+1},Y^{p^e},Z^{p^e})$ for only finitely many small values of $e$, which can be done over the polynomial ring $k[Y,Z]$. Over 
$P:=k[Y,Z]$ these modules split as a direct sum $P(m)\oplus P(n)$ for some $m$, $n\in\Z$ by Hilbert's Syzygy Theorem. To compute the number $|n-m|$ 
we will define a non-standard graded analogue $\tau$ of Han's $\delta$-function and prove a connection between $\tau$ and $\delta$ which allows us 
to compute $\tau$ effectively. To give an example, assume $R$ is of type $E_8$. Then $\Syz_R(X^{2a},Y^{b},Z^{c})$ with non-negative $a,b,c\in\Z$ splits as 
$k[Y,Z](m)\oplus k[Y,Z](n)$ and we have 
$$|m-n|=30\cdot\delta\left(a,\frac{b}{3},\frac{c}{5}\right).$$

At the end of this introduction, we want to give an outline of this article.

In Section 1 we give some basic notations and discuss what is known about the Hilbert-Kunz functions of two-dimensional rings of type $ADE$. 

In Section 2 the process how one gets syzygy modules out of matrix factorizations will be explained.

In the third section we will prove that the modules $\Syz_R(X^{p^e},Y^{p^e},Z^{p^e})$ are indecomposable in almost all characteristics if $R$ is of type 
$D$ or $E$.

In Section 4 we explain how the Hilbert-series of the modules $\Syz_R(X^{p^e},Y^{p^e},Z^{p^e})$ can be computed.

Finally, in Section 5 we will see how everything works together to obtain the Hilbert-Kunz functions of rings of type $ADE$.

We will use the ring of type $E_8$ as a running example to illustrate the various steps in the computation of the Hilbert-Kunz functions of 
two-dimensional $ADE$-rings.

\section*{Acknowledgements}
Since the results in this paper are part of my PhD-thesis, I thank first of all my advisor Holger Brenner for his patience and friendship during 
the last years.

I thank Almar Kaid and Axel St\"abler for many usefull discussions on Hilbert-Kunz theory as well as on vector bundles and 
Alessio Caminata for the discussions on the proof that two-dimensional $ADE$-rings are Cohen-Macaulay finite.

Moreover, I want to thank Igor Burban who brought the theory of matrix factorizations to my attention and Kevin Tucker
who is responsible for the corollary about the $F$-signature functions of the rings of type $ADE$.

\section{Hilbert-Kunz functions via vector bundles}
\begin{defi}
Let $(R,\mm)$ be a local Noetherian ring of characteristic $p>0$ and dimension $d$. Let $I=(f_1,\ldots,f_m)$ be an $\mm$-primary ideal and 
let $\lambda_R$ denote the length function.
\begin{enumerate} 
\item The function $\HKF(I,R,p^e):\N\ra\N$, 
$$e\longmapsto\lambda_R\left(R/\left(f_1^{p^e},\ldots,f_m^{p^e}\right)\right)$$
is called the \textit{Hilbert-Kunz function of $R$ with respect to $I$}.
\item We call the limit
$$\lim_{e\ra\infty}\frac{\HKF(I,R,p^e)}{p^{ed}},$$
whose existence was proven by Monsky in \cite{hkmexists}, the \textit{Hilbert-Kunz multiplicity of $R$ with respect to $I$}. We denote this 
limit by $\HKM(I,R)$ and set $\HKM(R):=\HKM(\mm,R)$ for short. We will call $\HKM(R)$ the Hilbert-Kunz multiplicity of $R$.
\end{enumerate}
\end{defi}

Sometimes we will use the symbol $(f_1,\ldots,f_m)^{[p^e]}$ to denote the ideal $(f_1^{p^e},\ldots,f_m^{p^e})$.

\begin{rem}
Note that for any finitely generated $R$-module $M$ its length can be computed as $\lambda_R(M)=\tfrac{\Dim_k(M)}{\Dim_k(R/\mm)}$ if $R$ is an algebra over some field $k$ and $\Dim_k(R/\mm)$ is finite.
\end{rem}

Next we give some basic facts on syzygy bundles, which are the vector bundles that will appear in the application of vector bundles to Hilbert-Kunz theory.

\begin{defi}
Let $k$ be a field and $R$ an affine $k$-algebra. Let $f_1,\ldots,f_n\in R$ and $X:=\Spec(R)$. 
\begin{enumerate}
\item We call the kernel of the map 
 $$\Oc_X^n\stackrel{f_1,\ldots,f_n}{\lra}\Oc_X$$
 the \textit{sheaf of syzygies} for $f_1,\ldots,f_n$ and denote it by $\Syz_X(f_1,\ldots,f_n)$. 
\item Let $\Ic_Z$ be the ideal sheaf of the closed subscheme $Z:=V(f_1,\ldots,f_n)\subseteq X$. We call the short exact sequence
 $$0\lra\Syz_X(f_1,\ldots,f_n)\lra\Oc_X^n\stackrel{f_1,\ldots,f_n}{\lra}\Ic_Z\lra 0$$
 the \textit{presenting sequence} of the sheaf of syzygies.
\end{enumerate}
\end{defi}

\begin{defi}
Let $k$ be a field and $R$ a standard-graded, affine $k$-algebra. Let $f_1,\ldots,f_n\in R$ be homogeneous elements with $\Deg(f_i)=d_i$. Let $Y:=\Proj(R)$. 
\begin{enumerate}
\item We call the kernel of the map 
 $$\bigoplus_{i=1}^n\Oc_Y(-d_i)\stackrel{f_1,\ldots,f_n}{\lra}\Oc_Y$$
 the \textit{sheaf of syzygies} for $f_1,\ldots,f_n$ and denote it by $\Syz_Y(f_1,\ldots,f_n)$. 
\item Let $\Ic_Z$ be the ideal sheaf of the closed subscheme $Z:=V_+(f_1,\ldots,f_n)\subseteq Y$. We call the short exact sequence
 $$0\lra\Syz_Y(f_1,\ldots,f_n)\lra\bigoplus_{i=1}^n\Oc_Y(-d_i)\stackrel{f_1,\ldots,f_n}{\lra}\Ic_Z\lra 0$$
 the \textit{presenting sequence} of the sheaf of syzygies.
\end{enumerate}
\end{defi}

Note that the sheaf $\Syz_X(f_1,\ldots,f_n)$ in the affine case as well as $\Syz_Y(f_1,\ldots,f_n)$ in the projective case are nothing but the sheafification of the (graded) 
$R$-module $\Syz_R(f_1,\ldots,f_n)$. Therefore we may define in the graded case for any $m\in\Z$ the \textit{sheaf of syzygies of total degree $m$} of 
$f_1,\ldots,f_n$, denoted by $\Syz_Y(f_1,\ldots,f_n)(m)$, as the sheafification of the $R$-module $\Syz_R(f_1,\ldots,f_n)(m)$. Note that the equality
$$\Syz_Y(f_1,\ldots,f_n)(m)=\Syz_Y(f_1,\ldots,f_n)\otimes\Oc_Y(m)$$ 
holds, since $R$ is standard-graded. Moreover, if 
$R$ has dimension at least two and satisfies $S_2$ the sheaf 
$\Syz_Y(f_1,\ldots,f_n)(m)$ encodes the homogeneous component of degree $m$ of the $R$-module $\Syz_R(f_1,\ldots,f_n)$. 
We get this homogeneous component back by taking global sections (cf. \cite[Exercise III.3.5]{hartshorne})
$$\Gamma\left(Y,\Syz_Y(f_1,\ldots,f_n)(m)\right)\cong \Syz_R(f_1,\ldots,f_n)_m.$$

Recall some basic properties of syzygy modules. The proofs might be found in \cite[Chapter 1.2.3]{drdaniel}.

\begin{prop}\label{manipulationlemma} Let $R$ be a standard-graded, affine $k$-domain and pick some homogeneous elements 
$f,f_1,\ldots,f_n\in R$ with $f\neq 0$.
\begin{enumerate}
 \item The following isomorphism holds
 $$\Syz_R(ff_1,\ldots,ff_n)\cong\Syz_R(f_1,\ldots,f_n)(-\Deg(f)).$$
 \item If $f,f_n$ is an $R$-regular sequence, we have 
 $$\Syz_R(ff_1,\ldots,ff_{n-1},f_n)\cong\Syz_R(f_1,\ldots,f_n)(-\Deg(f)).$$
 \item If $\Deg(ff_i)=\Deg(f_n)$ for some $i\in\{1,\ldots,n-1\}$, we have $$\Syz_R(f_1,\ldots,f_n)\cong\Syz_R(f_1,\ldots,f_{n-1},f_n+ff_i).$$ 
 \end{enumerate}
\end{prop}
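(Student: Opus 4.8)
The plan is to prove all three isomorphisms by exhibiting explicit graded maps on the realizations of the syzygy modules as graded submodules of free modules. Recall that $\Syz_R(g_1,\ldots,g_n)$ consists of the tuples $(a_1,\ldots,a_n)$ with $\sum_{j=1}^n a_j g_j=0$, sitting inside $\bigoplus_{j=1}^n R(-\Deg(g_j))$; such a tuple is homogeneous of total degree $m$ precisely when $\Deg(a_j)=m-\Deg(g_j)$ for every $j$. In each part the set-theoretic correspondence between tuples is essentially forced, and the real content is to check that the correspondence respects the relations defining the two syzygy modules and that it is homogeneous with the claimed degree shift.

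For part (1), since $R$ is a domain and $f\neq 0$, the element $f$ is a nonzerodivisor, so $\sum_j a_j(ff_j)=f\sum_j a_j f_j$ vanishes if and only if $\sum_j a_j f_j=0$. Hence the two syzygy modules consist of literally the same tuples, and it remains only to compare gradings: a degree-$m$ syzygy for $(ff_1,\ldots,ff_n)$ has $\Deg(a_j)=m-\Deg(f)-\Deg(f_j)$, which is exactly the degree of the $j$-th entry of a degree-$(m-\Deg(f))$ syzygy for $(f_1,\ldots,f_n)$. This produces the twist by $-\Deg(f)$.

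For part (2), I would use the map $(a_1,\ldots,a_n)\mapsto(a_1,\ldots,a_{n-1},fa_n)$ from $\Syz_R(f_1,\ldots,f_n)$ to $\Syz_R(ff_1,\ldots,ff_{n-1},f_n)$. The forward direction is immediate, since $\sum_{j<n}a_j(ff_j)+(fa_n)f_n=f\sum_j a_j f_j=0$, and the degree bookkeeping works as in part (1) using $\Deg(fa_n)=\Deg(f)+\Deg(a_n)$. I expect the main obstacle to be surjectivity: given a syzygy $(c_1,\ldots,c_n)$ of $(ff_1,\ldots,ff_{n-1},f_n)$ one has $c_n f_n=-f\sum_{j<n}c_j f_j\in(f)$, and concluding $c_n\in(f)$—so that $c_n=fa_n$ and $(c_1,\ldots,c_{n-1},a_n)$ is the desired preimage—is exactly where the hypothesis that $f,f_n$ is an $R$-regular sequence enters, since it says $f_n$ is a nonzerodivisor on $R/(f)$. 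Cancelling the nonzerodivisor $f$ then shows $(c_1,\ldots,c_{n-1},a_n)$ really is a syzygy of $(f_1,\ldots,f_n)$.

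For part (3), writing $g_n:=f_n+ff_i$ (homogeneous of degree $\Deg(f_n)=\Deg(ff_i)$ by hypothesis), I would use the shear $(a_1,\ldots,a_n)\mapsto(b_1,\ldots,b_n)$ with $b_i:=a_i-fa_n$ and $b_j:=a_j$ for $j\neq i$. The substitution $a_n f_n=a_n g_n-a_n ff_i$ turns the relation $\sum_j a_j f_j=0$ into $\sum_{j\neq n}b_j f_j+a_n g_n=0$, so the image lies in $\Syz_R(f_1,\ldots,f_{n-1},g_n)$, and the map is invertible via $a_i=b_i+fa_n$. Homogeneity of $b_i$ is precisely what the degree hypothesis secures: $\Deg(fa_n)=\Deg(f)+(m-\Deg(f_n))=m-\Deg(f_i)=\Deg(a_i)$, so both summands of $b_i$ lie in the same graded piece and no degree shift occurs.
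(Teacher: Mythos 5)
Your proof is correct, and all three verifications (cancelling the nonzerodivisor $f$, using that $f_n$ is a nonzerodivisor on $R/(f)$ for surjectivity in (2), and the degree-preserving shear in (3)) are exactly the standard element-level arguments; the paper itself does not prove the proposition but defers to \cite[Chapter 1.2.3]{drdaniel}, so there is nothing to contrast with. The only point worth making explicit is that the regular-sequence hypothesis in (2) is used precisely where you say it is, and that $f$ being a nonzerodivisor is automatic since $R$ is a domain and $f\neq 0$.
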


\begin{prop}\label{syzprop}
Let $k$ be an algebraically closed field and $R$ a standard-graded $k$-algebra of dimension at least two and $Y:=\Proj(R)$. Let 
$f_1,\ldots,f_n\in R$ be homogeneous elements with $\Deg(f_i)=d_i$. Assume that at least one of the $f_i$ is a non-zero 
divisor. Then the following statements hold.

\begin{enumerate}
\item The rank of $\Syz_Y(f_1,\ldots,f_n)$ is $n-1$.
\item If the ideal generated by the $f_i$ is $R_+$-primary, then the morphism 
 $$\bigoplus_{i=1}^n\Oc_Y(-d_i)\stackrel{f_1,\ldots,f_n}{\lra}\Oc_Y$$
 is surjective and $\Syz_Y(f_1,\ldots,f_n)$ is locally free.
\item The sheaf $\Syz_Y(f_1,\ldots,f_n)$ is locally free on $U:=D_+(f_1,\ldots,f_n)\subseteq Y$.
\item If $Y$ is an irreducible curve and the ideal $(f_1,\ldots,f_n)$ is $R_+$-primary, then
 $$\Det(\Syz_Y(f_1,\ldots,f_n)(m))\cong\Oc_Y\left((n-1)m-\sum_{i=1}^nd_i\right).$$
 In particular, we have $$\Deg(\Syz_Y(f_1,\ldots,f_n)(m))=\left((n-1)m-\sum_{i=1}^nd_i\right)\cdot \Deg(Y).$$
\item Let $\phi:R\ra S$ be a morphism of degree $d$ of normal, standard-graded $k$-domains of dimension two. Let $g:Y:=\Proj(S)\ra X$ be the 
 corresponding morphism of smooth projective curves. Then the following holds 
 $$g\pb(\Syz_X(f_1,\ldots,f_n)(m))\cong\Syz_Y(\phi(f_1),\ldots,\phi(f_n))(dm).$$
\item Assume that $\chara(k)=p>0$, that $Y$ is an integral curve and that the ideal $(f_1,\ldots,f_n)$ is $R_+$-primary. For the iterated pull-backs under the Frobenius, we have
 $$\fpb{e}(\Syz_X(f_1,\ldots,f_n)(m))\cong\Syz_X(f_1^q,\ldots,f_n^q)(mq),$$
 where $q=p^e$ and $e\in\N$.
\end{enumerate}
\end{prop}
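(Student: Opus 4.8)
The plan is to read off every assertion from the presenting sequence
$$0\lra\Syz_Y(f_1,\ldots,f_n)\lra\bigoplus_{i=1}^n\Oc_Y(-d_i)\stackrel{f_1,\ldots,f_n}{\lra}\Ic_Z\lra 0,$$
together with the local behaviour of the structure map $\bigoplus_{i=1}^n\Oc_Y(-d_i)\ra\Oc_Y$. For (1), I would note that multiplication by a non-zero divisor $f_i$ makes this map generically surjective onto $\Oc_Y$, so $\Ic_Z$ is a nonzero ideal sheaf of rank one; since the middle term has rank $n$, additivity of ranks forces $\Syz_Y(f_1,\ldots,f_n)$ to have rank $n-1$. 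For (3), over $U:=D_+(f_1,\ldots,f_n)$ at least one $f_i$ is a unit in each stalk, so the structure map is surjective on $U$ and $Z\cap U=\emptyset$; the presenting sequence there reads $0\ra\Syz_Y(f_1,\ldots,f_n)\ra\bigoplus_i\Oc_Y(-d_i)\ra\Oc_Y\ra 0$, and the kernel of a surjection onto the locally free sheaf $\Oc_U$ is locally a direct summand, hence locally free. For (2), the $R_+$-primary hypothesis gives $\sqrt{(f_1,\ldots,f_n)}=R_+$, so $V_+(f_1,\ldots,f_n)=\emptyset$ and $U=Y$; the argument of (3) then runs globally, yielding surjectivity of the structure map and local freeness of $\Syz_Y(f_1,\ldots,f_n)$ on all of $Y$.

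For (4), I would twist the (now short exact, with $\Ic_Z=\Oc_Y$) presenting sequence by $\Oc_Y(m)$ to obtain
$$0\lra\Syz_Y(f_1,\ldots,f_n)(m)\lra\bigoplus_{i=1}^n\Oc_Y(m-d_i)\lra\Oc_Y(m)\lra 0$$
and take determinants. Multiplicativity of $\Det$ along this short exact sequence on the curve $Y$ gives $\Det(\Syz_Y(f_1,\ldots,f_n)(m))\otimes\Oc_Y(m)\cong\Oc_Y\big(nm-\sum_{i=1}^n d_i\big)$, whence $\Det(\Syz_Y(f_1,\ldots,f_n)(m))\cong\Oc_Y\big((n-1)m-\sum_{i=1}^n d_i\big)$. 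The degree statement is then immediate from $\Deg(\Oc_Y(j))=j\cdot\Deg(Y)$.

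For (5) the crux is exactness of the pullback. Since $R$ and $S$ are normal domains of dimension two, $X$ and $Y$ are smooth projective curves and the induced non-constant morphism $g$ is finite, hence flat; therefore $g\pb$ is exact and carries the presenting sequence of $\Syz_X(f_1,\ldots,f_n)$ to a short exact sequence. Using that $\phi$ has degree $d$, so $g\pb\Oc_X(1)\cong\Oc_Y(d)$, the middle term becomes $\bigoplus_i\Oc_Y(-d\,d_i)$ and the structure map becomes multiplication by the $\phi(f_i)$, while flatness identifies $g\pb\Ic_Z\hookrightarrow\Oc_Y$ with the ideal sheaf of $V_+(\phi(f_1),\ldots,\phi(f_n))$. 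Matching this against the presenting sequence of $\Syz_Y(\phi(f_1),\ldots,\phi(f_n))$ and tracking the twist $g\pb\Oc_X(m)\cong\Oc_Y(dm)$ gives (5). For (6), the $R_+$-primary hypothesis lets me invoke (2): the presenting sequence is then an exact sequence of locally free sheaves, hence locally split, so the Frobenius pullback $\fpb{e}$, which satisfies $\fpb{e}\Oc_X(j)\cong\Oc_X(p^e j)$ and sends the structure map to multiplication by the $f_i^{q}$ with $q=p^e$, preserves its exactness; reading off the terms identifies $\fpb{e}(\Syz_X(f_1,\ldots,f_n)(m))$ with $\Syz_X(f_1^q,\ldots,f_n^q)(mq)$.

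I expect the routine parts to be (1)--(4): a rank count, local freeness of kernels of surjections onto line bundles, and a determinant bookkeeping, all immediate once the presenting sequence is in hand. The main obstacle should be in (5), namely justifying that $g$ is flat so that $g\pb$ commutes with the kernel defining the sheaf of syzygies, and carefully matching $g\pb\Ic_Z$ with the ideal sheaf of the pulled-back subscheme together with the twist bookkeeping; for (6) the locally split trick afforded by (2) sidesteps any flatness issue for the Frobenius.
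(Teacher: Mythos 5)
Your proof is correct; the paper itself gives no argument for this proposition, deferring to \cite[Chapter 1.2.3]{drdaniel}, and your route via the presenting sequence (rank additivity, surjectivity off $V_+(f_1,\ldots,f_n)$, multiplicativity of $\Det$, flatness of the finite morphism of smooth curves for (5), and local splitness of the short exact sequence of locally free sheaves for the Frobenius pullback in (6)) is exactly the standard argument these facts rest on. The only point worth polishing is that in (6) one should note $\fpb{e}\Oc_X(j)\cong\Oc_X(p^ej)$ because $\fpb{}(\Lc)\cong\Lc^{\otimes p}$ for invertible sheaves, which the paper records as a separate lemma.
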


The next lemma shows how syzygy bundles arise in Hilbert-Kunz theory. For a proof see \cite{bredim2} or \cite{tri1}.

\begin{lem}\label{hkgeomapp}
Let $k$ be an algebraically closed field of positive characteristic $p$, let $R$ be a standard-graded $k$-domain of dimension at least two satisfying $S_2$ and 
let $Y=\Proj(R)$. Let $I=(f_1,\ldots,f_n)$ be a homogeneous $R_+$-primary ideal with $\Deg(f_i)=d_i$. Then for every $q=p^e\geq 1$ and 
$m\in\Z$ the following formula holds
\begin{align*}
\begin{split}
\Dim_k\left(R/\left(f_1^{p^e},\ldots,f_n^{p^e}\right)\right)_m =& \Th^0(X,\Oc_X(m))-\sum_{i=1}^n\Th^0(X,\Oc_X(m-p^e\cdot d_i))\\
 & +\Th^0\left(X,\Syz_X\left(f_1^{p^e},\ldots,f_n^{p^e}\right)(m)\right).
\end{split}
\end{align*}
\end{lem}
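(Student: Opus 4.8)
The plan is to produce a four-term exact sequence of graded $R$-modules, pass to its homogeneous component of degree $m$, and then convert the resulting count of vector space dimensions into global sections by invoking the saturation property supplied by the $S_2$ hypothesis. (Throughout I identify the $X$ appearing in the displayed formula with $Y=\Proj(R)$.)

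First I would record that the tautological presentation of the syzygy module gives the exact sequence of graded $R$-modules
$$0\lra\Syz_R(f_1^q,\ldots,f_n^q)\lra\bigoplus_{i=1}^n R(-qd_i)\lra R\lra R/I^{[q]}\lra 0,$$
in which the middle map sends the $i$-th basis element to $f_i^q$, so that its image is the ideal $I^{[q]}=(f_1^q,\ldots,f_n^q)$ and its cokernel is $R/I^{[q]}$. Since $I$, and hence $I^{[q]}$, is $R_+$-primary, Proposition~\ref{syzprop}(2) guarantees that after sheafifying the evaluation map $\bigoplus_{i=1}^n\Oc_Y(-qd_i)\to\Oc_Y$ is surjective, so the sheafified sequence is the presenting sequence with $\Ic_Z=\Oc_Y$ and $\Syz_Y(f_1^q,\ldots,f_n^q)$ is locally free.

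Next I would restrict the module sequence to its homogeneous component of degree $m$. As $R$ is a finitely generated graded $k$-algebra, every term is a finite-dimensional $k$-vector space, so additivity of dimension along an exact sequence (the vanishing of the alternating sum of dimensions) yields, using $R(-qd_i)_m=R_{m-qd_i}$,
$$\Dim_k\left(R/I^{[q]}\right)_m=\Dim_k R_m-\sum_{i=1}^n\Dim_k R_{m-qd_i}+\Dim_k\Syz_R(f_1^q,\ldots,f_n^q)_m.$$

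Finally comes the step I expect to be the crux, namely translating these graded dimensions into cohomology, and this is exactly where the hypotheses $\Dim R\geq 2$ and $S_2$ are needed. Under these assumptions $R$ and its syzygy modules agree with the global sections of their sheafifications in every degree; concretely, by the isomorphism $\Gamma(Y,\Syz_Y(f_1^q,\ldots,f_n^q)(m))\cong\Syz_R(f_1^q,\ldots,f_n^q)_m$ recorded before the lemma, together with the analogous saturation statement $R_m\cong\Th^0(Y,\Oc_Y(m))$ for the structure sheaf, I may substitute
$$\Dim_k R_m=\Th^0(Y,\Oc_Y(m)),\qquad \Dim_k R_{m-qd_i}=\Th^0(Y,\Oc_Y(m-qd_i)),$$
together with $\Dim_k\Syz_R(f_1^q,\ldots,f_n^q)_m=\Th^0(Y,\Syz_Y(f_1^q,\ldots,f_n^q)(m))$, into the dimension count from the previous paragraph; this gives precisely the asserted formula. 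The only genuine subtlety is that the equality between a graded module and the global sections of its sheafification can fail in low degrees for modules that are not saturated, so it is essential that both $R$ and $\Syz_R(f_1^q,\ldots,f_n^q)$ inherit the required $S_2$-type condition, which is what the hypotheses on $R$ secure.
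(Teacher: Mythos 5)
Your proof is correct and is essentially the standard argument that the paper delegates to \cite{bredim2} and \cite{tri1}: split the four-term presentation of $R/I^{[q]}$ into graded pieces and identify each graded component with global sections of the corresponding twisted sheaf. You also correctly locate the only delicate point, namely that the hypotheses $\dim R\geq 2$ and $S_2$ force depth $\geq 2$ for $R$ and for the second syzygy module $\Syz_R(f_1^q,\ldots,f_n^q)$, which is exactly the saturation statement the paper records just before the lemma.
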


If $R=k[X,Y,Z]/(F)$ is a standard-graded, two-dimensional, normal domain and $I$ is generated by powers of $X$, $Y$ and $Z$, we will use Lemma 
\ref{hkgeomapp} to provide explicit formulas for $\HKF(I,R,p^e)$ in the two cases where $\Syz_{\Proj(R)}(I)$ admits a Frobenius periodicity up to twist 
resp. where the higher Frobenius pull-backs of $\Syz_{\Proj(R)}(I)$ split as a direct sum of twisted structure sheafs.

\begin{defi}
Let $X$ be a scheme over an algebraically closed field $k$ of characteristic $p>0$. Let $\Sc$ be a vector bundle over $X$. 
Assume there are $s<t\in\N$ such that the Frobenius pull-backs $\fpb{e}(\Sc)$ of $\Sc$ are pairwise non-isomorphic 
for $0\leq e\leq t-1$ and $\fpb{t}(\Sc)\cong\fpb{s}(\Sc)$. We say that $\Sc$ admits a \textit{$(s,t)$-Frobenius periodicity.} 
Finally, the bundle $\Sc$ admits a \textit{Frobenius periodicity} if there are $s<t\in\N$ such that $\Sc$ admits a $(s,t)$-Frobenius periodicity.
\end{defi}

\begin{rem}
If $X$ is a projective scheme only bundles of degree zero might admit a Frobenius periodicity in the strong sence above. For bundles of non-zero degree 
we will use the following weaker definition of Frobenius periodicity. We say that $\Sc$ admits a $(s,t)$-Frobenius periodicity if 
$$\fpb{e}(\Sc)\ncong\fpb{e'}(\Sc)(m) \quad\text{and}\quad \fpb{t}(\Sc)\cong\fpb{s}(\Sc)(n)$$
hold for all $0\leq e'<e\leq t-1$, all $m\in\Z$ and some $n\in\Z$.
\end{rem}

\begin{lem}\label{syznontr} 
Let $R:=k[X,Y,Z]/(F)$ be a normal standard-graded domain over an algebraically closed field $k$ with $\chara(k)=p>0$. Fix the positive natural numbers 
$\alpha$, $\beta$, $\gamma$ as well as the ideal $I=(X^{\alpha},Y^{\beta},Z^{\gamma})$. Assume that for some $e\in\N$ one has
\begin{align}\label{eqsyzsyz}\Syz_R\left(I^{[p^e]}\right)\cong\Syz_R(X^a,Y^b,Z^c)(-n)\end{align}
for some $n\in\Z$ and some positive integers $a$, $b$, $c$ such that at least one of the inequalities $a<\alpha p^e$, $b<\beta p^e$, $c<\gamma p^e$ holds. 
With $D:=\Dim_k(R/(X^a,Y^b,Z^c))$ the value at $e$ of the Hilbert-Kunz function of $R$ with respect to $I$ is given by
$$\HKF(I,R,p^e)=\frac{d\cdot Q(\alpha,\beta,\gamma)\cdot p^{2e}-d\cdot Q(a,b,c)}{4}+D,$$
where $Q$ denotes the quadratic form 
$$(a,b,c)\longmapsto 2(ab+ac+bc)-a^2-b^2-c^2.$$
\end{lem}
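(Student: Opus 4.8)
The plan is to apply Lemma~\ref{hkgeomapp} to the ideal $I=(X^{\alpha},Y^{\beta},Z^{\gamma})$ and then sum the resulting dimension formula over all degrees $m\in\Z$ to recover $\HKF(I,R,p^e)=\sum_m\Dim_k(R/I^{[p^e]})_m$. Lemma~\ref{hkgeomapp} expresses each graded piece as a combination of $\Th^0$ of twists of $\Oc_X$ and of $\Th^0$ of the twisted syzygy bundle $\Syz_X(I^{[p^e]})(m)$, where $X=\Proj(R)$ is a smooth projective curve of degree $\Deg(X)$. The key geometric input will be the hypothesis~\eqref{eqsyzsyz}, which identifies the syzygy bundle for the $p^e$-th Frobenius powers with a twist of the syzygy bundle for the \emph{smaller} exponents $(a,b,c)$; this is precisely what lets me replace an unbounded computation by a bounded one.

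The main structural step is to compare the Hilbert-Kunz function at $I^{[p^e]}$ with the analogous global count for the ideal $J:=(X^a,Y^b,Z^c)$. Using~\eqref{eqsyzsyz} together with $\fpb{e}(\Syz_X(I)(m))\cong\Syz_X(I^{[p^e]})(mq)$ from Proposition~\ref{syzprop}(6), the syzygy contributions to the two counts agree up to the degree shift by $n$. After summing the Lemma~\ref{hkgeomapp} identity over all $m$, the dependence on the syzygy bundle cancels against the corresponding term for $J$, and what survives is the difference of the two \emph{Euler-characteristic-type} expressions built from $\Th^0(X,\Oc_X(\,\cdot\,))$. Here I would use that $X$ is a smooth projective curve, so for large twists $\Th^0(X,\Oc_X(m))$ is governed by Riemann-Roch and the higher cohomology vanishes; the quadratic form $Q(\alpha,\beta,\gamma)\cdot p^{2e}-Q(a,b,c)$ should emerge exactly as the difference of the leading Riemann-Roch contributions, since the degree of $\Syz$ computed in Proposition~\ref{syzprop}(4) is linear in $m$ and $\sum d_i$, and squaring the defining exponents in the summation over $m$ produces the cross terms $2(ab+ac+bc)$ and the squares $a^2+b^2+c^2$.

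Concretely, I expect $\tfrac{d\cdot Q(\alpha,\beta,\gamma)}{4}p^{2e}$ to be the ``asymptotic'' bulk term coming from the powers $(X^{\alpha p^e},Y^{\beta p^e},Z^{\gamma p^e})$, while $-\tfrac{d\cdot Q(a,b,c)}{4}$ is the correction produced by passing to the genuinely computable ideal $J$, and the additive constant $D=\Dim_k(R/J)$ accounts for the finite-length cokernel of the presenting sequence of $\Syz_X(J)$, i.e.\ for the sections $\Th^0(X,\Ic_Z(m))$ that are not captured by the pure $\Oc_X$-terms. I would verify that the $\tfrac14$ and the factor $d=\Deg(X)$ come out correctly by tracking $\Deg(Y)=\Deg(X)$ in Proposition~\ref{syzprop}(4) and in Riemann-Roch; the quadratic form $Q$ is essentially the self-intersection data of the complete intersection cut out by three forms of degrees $a$, $b$, $c$ on a surface.

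The hard part will be the bookkeeping of the global sections: I must show that the Frobenius twist $n$ and the tail behaviour of $\Th^0(X,\Oc_X(m))$ for negative and small $m$ contribute nothing beyond the stated constant $D$, rather than leaving residual $m$-dependent or $n$-dependent error terms. This requires that the correction coming from $\Ic_Z$ versus $\Oc_Z$ collapses exactly to the length $D$, which in turn uses that $R$ is a two-dimensional normal domain satisfying $S_2$ so that the relevant $\Th^0$'s recover the honest graded pieces of the syzygy module, as recorded before Proposition~\ref{manipulationlemma}. I would isolate this cancellation as the crux and treat the emergence of $Q$ as a routine, if lengthy, Riemann-Roch calculation on the smooth curve $X$.
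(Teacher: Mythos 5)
Your proposal follows essentially the same route as the paper: apply the presenting-sequence/Lemma~\ref{hkgeomapp} identity to both $I^{[p^e]}$ and $J=(X^a,Y^b,Z^c)$, use the hypothesis \eqref{eqsyzsyz} to trade the syzygy term of the former for that of the latter, sum over $m$, telescope, and finish with Riemann--Roch (the genus terms cancel and the $\Th^1$-sums vanish by Serre for large twists), with $D$ arising from the graded pieces of $R/J$. The one ingredient you leave implicit is that $n$ is pinned down as $n=\tfrac{q(\alpha+\beta+\gamma)-a-b-c}{2}$ by comparing degrees of the two syzygy bundles via Proposition~\ref{syzprop}(4); this is precisely what makes the residual telescoped sums collapse to $\tfrac{d}{4}\bigl(Q(\alpha,\beta,\gamma)p^{2e}-Q(a,b,c)\bigr)+D$ with no leftover $n$-dependence.
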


\begin{proof}
Computing the degrees of the $C:=\Proj(R)$-bundles corresponding to the $R$-modules in (\ref{eqsyzsyz}), we find
$$n=\frac{q(\alpha+\beta+\gamma)-a-b-c}{2}.$$
By assumption we have the following diagram

$$\xymatrix{
0\ar[r] & \Syz_R(I\qpot)\ar[r]\ar[d]^{\cong} & R(-\alpha\cdot q)\oplus R(-\beta\cdot q)\oplus R(-\gamma\cdot q)\ar[r] & I\qpot\ar[r] & 0 \\
0\ar[r] & \Syz_R(J)\ar[r] & R(-n-a)\oplus R(-n-b)\oplus R(-n-c)\ar[r] & J\ar[r] & 0,
}$$
with $J:=(X^a,Y^b,Z^c)$. Sheafifying the second row, twisting by $\Oc_C(m)$ and taking global dimensions yields
\begin{align}
\nonumber &\quad \Th^0(C,\Syz_C(X^a,Y^b,Z^c)(m-n))\\ 
&= \Th^0(C,\Oc_C(m-n-a))+\Th^0(C,\Oc_C(m-n-b))+\Th^0(C,\Oc_C(m-n-c)) \label{subst} \\
\nonumber &\quad -\Th^0(C,\Oc_C(m-n))+\Dim_k(R/(X^a,Y^b,Z^c))_{m-n}.
\end{align}
Doing the same with the first row and substituting the term 
$$\Th^0\left(C,\Syz_C\left(I\qpot\right)(m)\right)$$
with the right hand side of (\ref{subst}), we get
\begin{align}\label{eqglosec}
\nonumber &\quad \Dim_k(R/(X^{\alpha q},Y^{\beta q},Z^{\gamma q}))\\
\nonumber &= \lim_{x\ra\infty}\left[\sum_{m=0}^x\dimglo{\Oc_C(m)}-\sum_{m=0}^x\dimglo{\Oc_C(m-\alpha q)}-\sum_{m=0}^x\dimglo{\Oc_C(m-\beta q)}\right.\\
\nonumber &\quad -\sum_{m=0}^x\dimglo{\Oc_C(m-\gamma q)}-\sum_{m=0}^x\dimglo{\Oc_C(m-n)}+\sum_{m=0}^x\dimglo{\Oc_C(m-n-a)}\\
\nonumber &\quad \left. +\sum_{m=0}^x\dimglo{\Oc_C(m-n-b)}+\sum_{m=0}^x\dimglo{\Oc_C(m-n-c)}\right]+D\\
\nonumber &= \lim_{x\ra\infty}\left[\sum_{m=0}^x\dimglo{\Oc_C(m)}-\sum_{m=0}^{x-\alpha q}\dimglo{\Oc_C(m)}-\sum_{m=0}^{x-\beta q}\dimglo{\Oc_C(m}\right.\\
\nonumber &\quad \left. -\sum_{m=0}^{x-\gamma q}\dimglo{\Oc_C(m)}+\sum_{m=0}^{x-n-a}\dimglo{\Oc_C(m)}+\sum_{m=0}^{x-n-b}\dimglo{\Oc_C(m)}\right.\\
\nonumber &\quad \left. +\sum_{m=0}^{x-n-c}\dimglo{\Oc_C(m)}-\sum_{m=0}^{x-n}\dimglo{\Oc_C(m)}\right]+D\\
 &= \lim_{x\ra\infty}\left[\sum_{m=x-n+1}^x\dimglo{\Oc_C(m)}-\sum_{m=x-n-a+1}^{x-\alpha q}\dimglo{\Oc_C(m)}\right.\\
\nonumber &\quad \left. -\sum_{m=x-n-b+1}^{x-\beta q}\dimglo{\Oc_C(m}-\sum_{m=x-n-c+1}^{x-\gamma q}\dimglo{\Oc_C(m)}\right]+D.
\end{align}

By Riemann-Roch we have 
$$\Th^0(\Oc_C(m))=m\cdot \Deg(C)+1-g+\Th^1(\Oc_C(m)),$$
where $g$ is the genus of $C$. Substituting this in Equation (\ref{eqglosec}), the summands involving the genus cancel and the sums over $\Th^1(\Oc_C(m))$ vanish for $x\gg 0$ due to Serre. 
The surviving summands can be expressed as
$$\frac{d\cdot Q(\alpha,\beta,\gamma)\cdot q^2-d\cdot Q(a,b,c)}{4}+D.$$
\end{proof}

Similarly, one can show the following.

\begin{lem}\label{syztr}
With the notations from Lemma \ref{syznontr}, assume that there is an $e\in\N$ such that
\begin{align}\label{syzsplit1}\Syz_R\left(I^{[p^e]}\right)=R(-n)\oplus R(-l)\text{ for some }n,l\in\Z.\end{align}
Then the value at $e$ of the Hilbert-Kunz function of $R$ with respect to $I$ is given by
$$\HKF(I,R,p^e)=\frac{d\cdot Q(\alpha,\beta,\gamma)\cdot p^{2e}+d\cdot|n-l|^2}{4}.$$
Note that the assumption (\ref{syzsplit1}) means that the quotient $R/I^{[p^e]}$ has finite projective dimension and this carries over to all quotients 
$R/I^{[p^{e'}]}$ for $e'\geq e$.
\end{lem}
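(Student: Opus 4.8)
The plan is to mirror the proof of Lemma~\ref{syznontr}, the only change being that the reduction to a smaller syzygy module (and hence the correction term $D$) disappears because the bundle in question now splits into line bundles. Write $q = p^e$ and $C = \Proj(R)$, $d = \Deg(C)$. First I would note that $I^{[p^e]} = (X^{\alpha q}, Y^{\beta q}, Z^{\gamma q})$ is $R_+$-primary, so $V_+(I^{[p^e]}) = \emptyset$ and the ideal sheaf appearing in the presenting sequence is all of $\Oc_C$. Sheafifying the presenting sequence and using the hypothesis (\ref{syzsplit1}), I obtain the short exact sequence
$$0\lra\Oc_C(-n)\oplus\Oc_C(-l)\lra\Oc_C(-\alpha q)\oplus\Oc_C(-\beta q)\oplus\Oc_C(-\gamma q)\lra\Oc_C\lra 0,$$
so that $\Syz_C(I^{[p^e]})\cong\Oc_C(-n)\oplus\Oc_C(-l)$.

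Next I would pin down the constraint linking $n$, $l$ to the exponents. Comparing degrees via Proposition~\ref{syzprop}(4) (three generators of degrees $\alpha q,\beta q,\gamma q$, evaluated at $m=0$) gives $\Deg(\Syz_C(I^{[p^e]})) = -(\alpha+\beta+\gamma)q\cdot d$, whereas the split form gives $-(n+l)\cdot d$; hence
$$n+l=(\alpha+\beta+\gamma)\,p^e.$$
This single linear relation is the one genuinely new ingredient relative to Lemma~\ref{syznontr}, and it is what forces the final formula into the stated shape.

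Then I would substitute $\Syz_C(I^{[p^e]})(m)=\Oc_C(m-n)\oplus\Oc_C(m-l)$ into Lemma~\ref{hkgeomapp}, writing $\Dim_k(R/I^{[p^e]})_m$ as an alternating sum of $\Th^0$ of six twisted structure sheaves, summing over $m$, reindexing the shifted sums, and telescoping exactly as in the proof of Lemma~\ref{syznontr}. Applying Riemann--Roch $\Th^0(\Oc_C(m)) = m\,d + 1 - g + \Th^1(\Oc_C(m))$, with the $\Th^1$ terms vanishing for $x\gg 0$ by Serre, the constant $(1-g)$ contributions and the contributions linear in the cutoff cancel, both precisely because $n+l=(\alpha+\beta+\gamma)q$; this is the built-in consistency check that the limit is finite. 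The surviving quadratic part evaluates to $\tfrac{d}{2}\bigl(n^2+l^2-(\alpha^2+\beta^2+\gamma^2)q^2\bigr)$.

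Finally I would rewrite this in the claimed form: using $n+l=(\alpha+\beta+\gamma)q$ together with $2(n^2+l^2)=(n+l)^2+(n-l)^2$ to eliminate $n^2+l^2$, and recognizing $(\alpha+\beta+\gamma)^2-2(\alpha^2+\beta^2+\gamma^2)=Q(\alpha,\beta,\gamma)$, one arrives at $\tfrac{d\cdot Q(\alpha,\beta,\gamma)p^{2e}+d\cdot|n-l|^2}{4}$. For the closing remark, the splitting (\ref{syzsplit1}) exhibits a finite free resolution $0\to R(-n)\oplus R(-l)\to R(-\alpha q)\oplus R(-\beta q)\oplus R(-\gamma q)\to R\to R/I^{[p^e]}\to 0$, so $R/I^{[p^e]}$ has finite projective dimension; applying the Frobenius functor to this resolution and invoking the Peskine--Szpiro acyclicity lemma shows it remains a resolution of $R/I^{[p^{e'}]}$, whence finite projective dimension carries over to every $e'\geq e$. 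I expect no serious obstacle: the core computation is essentially that of Lemma~\ref{syznontr}, and the only genuine care is in the degree bookkeeping of the middle two steps.
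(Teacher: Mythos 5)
Your proposal is correct and is exactly the argument the paper intends: the paper gives no separate proof of Lemma \ref{syztr}, stating only that it follows ``similarly'' to Lemma \ref{syznontr}, and your adaptation (replacing the correction term $D$ by the split bundle $\Oc_C(-n)\oplus\Oc_C(-l)$, deriving $n+l=(\alpha+\beta+\gamma)p^e$ from the degree/telescoping consistency, and simplifying via $2(n^2+l^2)=(n+l)^2+(n-l)^2$) reproduces the stated formula. The closing remark on finite projective dimension propagating to all $e'\geq e$ is also handled correctly.
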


\section{Matrix factorizations and representations as first syzygy modules of ideals}\label{secmatfac}
Let $S:=k[X,Y,Z]$ and let $R:=S/(F)$ be a ring of type $ADE$. The goal is to construct isomorphisms between maximal Cohen-Macaulay $R$-modules 
and first syzygy modules of homogeneous ideals of $R$. To be more precise, given a full set $M_1,\ldots,M_n$ of representatives for the isomorphism 
classes of indecomposable, maximal Cohen-Macaulay $R$-modules, we want to find for each $i$ non-zero elements $f_1,\ldots,f_m\in R$ such that
$$M_i\cong\Syz_R(f_1,\ldots,f_m).$$

This has to be possible, since first syzygy modules of ideals are second syzygy modules of quotient rings.

We will denote the punctured spectrum of $R$ by $U$. Note that all maximal Cohen-Macaulay $R$-modules are locally free on $U$ since $\Spec(R)$ has 
an isolated singularity at the origin.

Now we describe a way to find a representation of a maximal Cohen-Macaulay module, given by a matrix factorization, as a syzygy bundle on $U$.

At first we recall the definition of a matrix factorization and some general facts. Our reference for matrix factorizations is Chapter 7 of the 
textbook \cite{yobook}. For the interaction of the graded and the local situation see \cite[Chapter 15]{yobook}.

\begin{defi}
Let $S$ be a polynomial ring. Let $\phi,\psi\in \Mat_n(S)$ with $\phi\psi=\psi\phi=F\cdot E_n$, for a non-zero element $F\in (S_+)^2$. Then the pair 
$(\phi,\psi)$ is a \textit{matrix factorization for $F$ of size $n$}. It is called \textit{reduced} if all entries are non-units.
\end{defi}

\begin{rem}
Let $(\phi,\psi)$ be a matrix factorization for some $F$ of size $n$.

\begin{enumerate}
 \item The module $\coKer(\phi)$ is a maximal Cohen-Macaulay $R:=S/(F)$-module, which is non-trivial if and only if $(\phi,\psi)$ is reduced.
 \item If $F$ is irreducible, the determinant of $\phi$ is $u\cdot F^i$, where $u\in S$ is a unit and $0\leq i\leq n$. Moreover, the rank of the $R$-module 
 $\coKer(\phi)$ equals $i$.
 \item The pair $(\phi\trans,\psi\trans)$ is again a matrix factorization for $F$ of size $n$ and 
 $$\coKer(\phi\trans)\cong\coKer(\phi)^{\vee}.$$
 \item It is possible to define direct sums as well as morphisms of matrix factorizations. Then the set of matrix factorizations for $F$ becomes an 
 additive category and Eisenbud has shown in \cite{eismat} that the equivalence classes of reduced, indecomposable matrix factorizations are in one 
 to one correspondence with the isomorphism classes of non-free, indecomposable, maximal Cohen-Macaulay $R$-modules.
 \end{enumerate}
\end{rem}

\begin{con}
For a (reduced) matrix factorization $(\phi_S,\psi_S)$ of size $n$, we have a two periodic free resolution of $M:=\coKer(\phi_S)$ as an $R$-module:
$$\ldots\stackrel{\psi}{\lra} R^n\stackrel{\phi}{\lra} R^n\stackrel{\psi}{\lra} R^n\stackrel{\phi}{\lra} R^n\lra M\lra 0,$$
where $\phi$ and $\psi$ arise from $\phi_S$ and $\psi_S$ by taking their entries modulo $F$. Note that we can identify $M$ with $\coKer(\phi)$.

Therefore we get $$M\cong R^n/\Ima(\phi)\cong R^n/\Ker(\psi)\cong \Ima(\psi).$$

For any subset $J$ of $\{1,\ldots,n\}$ we denote by $\psi^J$ the matrix obtained from $\psi$ by keeping all columns whose index belongs to $J$ (and 
deleting all other columns).

If $\coKer(\phi)\cong \Ima(\psi)$ has rank $m$ as an $R$-module, the sheaf $\Ima(\psi)^{\sim}|_U$ gives a locally free $\Oc_U$-module of rank $m$. 
Therefore a representing first syzygy module of an ideal has to be a syzygy module of an $(m+1)$-generated ideal.

Suppose that we can choose a $J\subseteq\{1,\ldots,n\}$ of cardinality $m+1$ such that $\Ima(\psi)^{\sim}|_U$ and $\Fc:=\Ima(\psi^J)^{\sim}|_U$
are isomorphic as sheaves of $\Oc_U$-modules\footnote{Note that we have no argument why this should be possible, but it works in all examples.}, where the isomorphism is the map induced by the natural inclusion 
$\Ima\left(\psi^J\right)\subseteq\Ima(\psi)$. 
A necessary condition for this is that the matrix $\psi^J$ has full rank - namely $m$ - in every non-zero point. 
Otherwise there would be at least one point $u\in U$ such that the stalk $(\Ima(\psi^J)^{\sim})_u$ has rank at most $m-1$.

Since $\Ima(\psi)^{\sim}|_U$ comes from a maximal Cohen-Macaulay $R$-module, it is locally free as $\Oc_U$-module, giving that 
$$\Oc_U^{m+1}\stackrel{\psi^J}{\lra}\Fc\lra 0$$
is a surjection of locally free $\Oc_U$-modules of ranks $m+1$ and $m$. Therefore the kernel $\Lc$ has to be locally free of rank one. This gives an 
isomorphism 
$$\Lc\otimes\Det(\Fc)\cong\Oc_U$$
of the determinants. We distinguish whether the determinant of $\Fc$ is trivial or not.

\textit{Case 1.} Assume $\Det(\Fc)\cong\Oc_U$. Then the line bundle $\Lc$ has to be trivial. Hence
\begin{equation}\label{dettrivial}
0\lra\Oc_U\stackrel{\eta}{\lra}\Oc_U^{m+1}\stackrel{\psi^J}{\lra}\Fc\lra 0
\end{equation}
is a short exact sequence and $\eta$ has to be the multiplication by a column vector $(g_1,\ldots,g_{m+1})\trans$ with $g_i\in R$. But then Sequence 
\eqref{dettrivial} is exactly the dual of the presenting sequence of $\Syz_U(g_1,\ldots,g_{m+1})$, hence there is an isomorphism
$$\Syz_U(g_1,\ldots,g_{m+1})\cong\Fc^{\vee}.$$
Moreover, the ideal generated by the $g_i$ has to be $R_+$-primary, since the determinant of the syzygy bundle is trivial.

\textit{Case 2.} Now assume that $\Lc$ is a non-trivial line bundle, hence $\Det(\Fc)\cong\Lc^{\vee}$. Dualizing the sequence 
$0\ra\Lc\ra\Oc_U^{m+1}\ra\Fc\ra 0$ yields
\begin{equation}\label{detnontrivial}0\lra\Fc^{\vee}\stackrel{(\psi^J)\trans}{\lra}\Oc_U^{m+1}\ra\Lc^{\vee}\lra 0.\end{equation}
Since $\Lc^{\vee}$ is a line bundle on $U$ there exists an embedding $\Lc^{\vee}\hookrightarrow\Oc_U$. With 
this embedding, we can extend Sequence \eqref{detnontrivial} to 
\begin{equation}\label{nontrivial}0\lra\Fc^{\vee}\stackrel{(\psi^J)\trans}{\lra}\Oc_U^{m+1}\stackrel{\mu}{\lra}\Oc_U\lra\Tc\lra 0,\end{equation}
where $\Tc$ is a non-zero torsion sheaf and the map $\mu$ is the multiplication by a row vector $(g_1,\ldots,g_{m+1})$. In this case, Sequence 
\eqref{nontrivial} is the presenting sequence of the syzygy bundle $\Syz_U(g_1,\ldots,g_{m+1})$, where the ideal $(g_1,\ldots,g_{m+1})$ is not 
$R_+$-primary, since $\Tc$ is non-zero.

In both cases the column vector $(g_1,\ldots,g_{m+1})\trans$ generates the kernel of $\psi^J$. In order to compute the $g_i$, we only need to find 
an element $(f_1,\ldots,f_{m+1})\trans\in\Ker\left(\psi^J\right)$. Then $(f_1,\ldots,f_{m+1})\trans$ is just a multiple of $(g_1,\ldots,g_{m+1})\trans$ 
and equality (up to multiplication by a unit) holds if we can prove that the $f_i$ are coprime. For example this condition is automatically fulfilled 
if the ideal generated by the $f_i$ is $R_+$-primary. Anyway, the syzygy modules of the two ideals are isomorphic.

All in all, our task is to find a vector $(f_1,\ldots,f_{m+1})\trans$ in the kernel of $\psi^J$. This gives an isomorphism 
$$\widetilde{M^{\vee}}\cong\Fc^{\vee}\cong\Syz_U(f_1,\ldots,f_{m+1}).$$
This isomorphism extends to a global isomorphism of $R$-modules, since all involved modules are reflexive (cf. \cite[Lemma 3.6]{mcmsur})
\end{con}

We will illustrate this construction by an explicit computation.

\begin{exa}
Let $F=X^2+Y^3+Z^5$ be the equation of an $E_8$-singularity. The pair of matrices
$$(\phi,\psi):=\left(\begin{pmatrix}
X & Y & Z & 0\\
-Y^2 & X & 0 & Z\\
-Z^4 & 0 & X & -Y\\
0 & -Z^4 & Y^2 & X
\end{pmatrix},
\begin{pmatrix}
X & -Y & -Z & 0\\
Y^2 & X & 0 & -Z\\
Z^4 & 0 & X & Y\\
0 & Z^4 & -Y^2 & X
\end{pmatrix}\right)$$
is a reduced (indecomposable) matrix factorization for $F$ of size 4 and the corresponding module $\coKer(\phi)$ has rank two. The following computations 
show that over $U$ the first column of $\psi$ lies in the span generated by the other three columns of $\psi$
\begin{align*}
\frac{Y^2}{X}\cdot \begin{pmatrix} -Y \\ X \\ 0 \\ Z^4 \end{pmatrix}+\frac{Z^4}{X}\cdot \begin{pmatrix} -Z \\ 0 \\ X \\ -Y^2 \end{pmatrix} = &  \begin{pmatrix} X \\ Y^2 \\ Z^4 \\ 0 \end{pmatrix},\\
-\frac{X}{Y}\cdot \begin{pmatrix} -Y \\ X \\ 0 \\ Z^4 \end{pmatrix}+\frac{Z^4}{Y}\cdot \begin{pmatrix} 0 \\ -Z \\ Y \\ X \end{pmatrix} = &  \begin{pmatrix} X \\ Y^2 \\ Z^4 \\ 0 \end{pmatrix}.
\end{align*}
Now we have to find three non-zero elements $f_1$, $f_2$ and $f_3$ such that $(f_1,f_2,f_3)\trans$ belongs to the kernel of $\psi^{\{2,3,4\}}$.

The first row $(-Y,-Z,0)$ of $\psi^{\{2,3,4\}}$ gives the relation $-Y\cdot f_1-Z\cdot f_2=0$ from which we conclude (since $Y$ and $Z$ are coprime 
in $R$) that $f_1=Z\cdot g$ and $f_2=-Y\cdot g$ for some non-zero $g\in R$.
From the second row $(X,0,-Z)$ we get the relation $Z\cdot X\cdot g-Z\cdot f_3=0$. This gives $f_3=X\cdot g$. Looking at the relations given by the 
rows three and four of $\psi^{\{2,3,4\}}$, we see that $g$ can be chosen as 1. We get the isomorphism
$$\Ima(\psi)^{\sim}|_U^{\vee}\cong\Ima(\psi^{\{2,3,4\}})^{\sim}|_U^{\vee}\cong\Syz_U(Z,-Y,X),$$
extending to the global isomorphism
$$\Ima(\psi)^{\vee}\cong\Syz_R(Z,-Y,X).$$
It is easy to see that $\Syz_R(Z,-Y,X)$ is self-dual since $\coKer(\phi)^{\vee}\cong\coKer(\phi\trans)$ and 
$(\alpha,\alpha):(\phi,\psi)\lra(\phi\trans,\psi\trans)$ with
$$\alpha:=\begin{pmatrix} 0 & 0 & 0 & -1\\ 0 & 0 & 1 & 0\\ 0 & -1 & 0 & 0\\ 1 & 0 & 0 & 0\end{pmatrix}$$
is an equivalence of matrix factorizations. Hence,
$$\coKer(\phi)\cong\Ima(\psi)\cong\Syz_R(Z,-Y,X).$$
In fact, one can delete any column from $\psi$, giving the isomorphisms
\begin{align*}
 \Ima(\psi) \cong & \Syz_R(Z,-Y,X)\\
 \cong & \Syz_R(Z,X,Y^2)\\
 \cong & \Syz_R(Y,X,-Z^4)\\
 \cong & \Syz_R(X,-Y^2,-Z^4).
\end{align*}
\end{exa}

Note that if $\coKer(\phi)$ has rank one, the isomorphism $\coKer(\phi)\cong\Ima(\psi)$ is already enough to obtain an isomorphism of the form 
$\coKer(\phi)\cong\Syz_R(f_1,f_2)$, since for a suitable choice of $f_1$, $f_2$ the columns of $\psi$ are directly the generators $\Syz_R(f_1,f_2)$.

A complete list of non-equivalent matrix factorizations representing all isomorphism classes of indecomposable, maximal Cohen-Macaulay $R$-modules can be 
found in \cite{matfacade} resp. \cite[Chapter 9, \S 4]{leuwie}. Using these lists, one obtains the following theorems whose proofs are analogue to the previous examples 
and can be found explicitly in \cite[Chapter 3]{drdaniel}. The statements of the form $\Det(\Syz_R(I)^{\sim}|_U)\cong \Syz_R(J)^{\sim}|_U$ follow 
by showing $I^{\vee\vee} = J$. We should mention that the enumeration of the modules in the subsequent theorems corresponds to the enumeration of 
the matrix factorizations as given in the above references. It corresponds also to the enumeration of the vertices in the corresponding Dynkin diagrams.

\begin{thm}\label{syzan}
Let $R=k[X,Y,Z]/(X^{n+1}-YZ)$ with $n\geq 0$. The pairwise non-isomorphic modules $M_m:=\Syz_R(X^m,Z)$ for $m=1,\ldots,n$ give a complete list of 
representatives of the isomorphism classes of indecomposable, non-free, maximal Cohen-Macaulay modules. 
Moreover, we have $M_m^{\vee}\cong M_{n+1-m}$ and $M_m\cong (X^m,Y)$.
\end{thm}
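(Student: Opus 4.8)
The plan is to follow the matrix-factorization strategy of Section~\ref{secmatfac}: realize each $M_m$ as the cokernel of one of the standard size-two matrix factorizations of $F:=X^{n+1}-YZ$, transport the completeness statement through Eisenbud's correspondence, and then read off the ideal description and the duality by elementary linear algebra over $R$. Concretely, for $j=1,\dots,n$ I would set
\[
\phi_j:=\begin{pmatrix} X^j & Y\\ Z & X^{n+1-j}\end{pmatrix},\qquad \psi_j:=\begin{pmatrix} X^{n+1-j} & -Y\\ -Z & X^{j}\end{pmatrix},
\]
so that $\phi_j\psi_j=\psi_j\phi_j=F\cdot E_2$. Since all entries are non-units and $\Det(\phi_j)=X^{n+1}-YZ=F$, each $(\phi_j,\psi_j)$ is a reduced matrix factorization whose cokernel $\coKer(\phi_j)$ is a non-free maximal Cohen-Macaulay $R$-module of rank one (the rank equals the exponent in $\Det(\phi_j)=F^{1}$). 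These are precisely the factorizations tabulated in the references cited before the theorem, so by Eisenbud's correspondence (item~(4) of the remark following the definition of a matrix factorization) the modules $\coKer(\phi_1),\dots,\coKer(\phi_n)$ form a complete list of pairwise non-isomorphic representatives of the indecomposable, non-free, maximal Cohen-Macaulay modules.

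The key step is to identify $M_m=\Syz_R(X^m,Z)$ with $\coKer(\phi_{n+1-m})$, for which I would invoke the rank-one case of the Construction: since $\coKer(\phi_{n+1-m})\cong\Ima(\psi_{n+1-m})$, the two columns of $\psi_{n+1-m}$ already generate a syzygy module. Indeed the identity $(Z,X^m)\cdot\psi_{n+1-m}=(0,0)$ — which is just the two relations $X^{m}Z-ZX^{m}=0$ and $X^{n+1}-YZ=0$ rewritten — shows that both columns of $\psi_{n+1-m}$ lie in $\Syz_R(Z,X^m)$, giving an inclusion $\Ima(\psi_{n+1-m})\hookrightarrow\Syz_R(Z,X^m)$ of rank-one reflexive modules. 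This inclusion is an isomorphism on the punctured spectrum $U$, since both sheaves are locally free of rank one there by Proposition~\ref{syzprop}, and reflexivity upgrades it to a global isomorphism $\Ima(\psi_{n+1-m})\cong\Syz_R(Z,X^m)\cong\Syz_R(X^m,Z)=M_m$, the last step merely reordering the two generators. Because $m\mapsto n+1-m$ permutes $\{1,\dots,n\}$, the modules $M_1,\dots,M_n$ inherit completeness and pairwise non-isomorphism from the $\coKer(\phi_j)$.

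For the ideal description I would exhibit an explicit homomorphism. A map $\coKer(\phi_{n+1-m})\to R$ is a row vector annihilating the columns of $\phi_{n+1-m}$, and $(X^m,-Y)$ works, since $X^{m}X^{n+1-m}-YZ=F=0$ and $X^{m}Y-YX^{m}=0$; its image is the ideal $(X^m,Y)$, and the map is injective because $\coKer(\phi_{n+1-m})$ is torsion-free over the domain $R$, whence $M_m\cong(X^m,Y)$. For the duality I would then use $\coKer(\phi)^{\vee}\cong\coKer(\phi\trans)$ (item~(3) of the same remark): the analogous left-kernel vector $(Y,-X^{n+1-m})$ annihilates the columns of $\phi_{n+1-m}\trans$ and identifies $\coKer(\phi_{n+1-m}\trans)\cong(X^{n+1-m},Y)$, which is $M_{n+1-m}$ by the ideal description already established, so $M_m^{\vee}\cong M_{n+1-m}$.

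I expect the only genuinely delicate point to be the passage from the column-generated submodule $\Ima(\psi_{n+1-m})$ to the full syzygy module $\Syz_R(Z,X^m)$: one must know that the inclusion is an \emph{equality}, which rests on both modules being reflexive and agreeing in codimension one, so that the torsion cokernel — supported only at the origin — must vanish. Tracking the index reversal $m\leftrightarrow n+1-m$ consistently through the three assertions is the remaining bookkeeping hazard, but it poses no real difficulty once the identification $M_m\cong\coKer(\phi_{n+1-m})$ is in place.
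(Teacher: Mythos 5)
Your route is the paper's own: take the standard size-two matrix factorizations of $X^{n+1}-YZ$ from the cited lists, get completeness and pairwise non-isomorphism from Eisenbud's correspondence, and convert each cokernel into a syzygy module of a two-generated ideal via the Construction of Section~\ref{secmatfac} (the paper only sketches this step, remarking that in the rank-one case the columns of $\psi$ ``are directly the generators'' of $\Syz_R(f_1,f_2)$, and defers the details to \cite[Chapter 3]{drdaniel}). Your explicit matrices, the computation $(Z,X^m)\cdot\psi_{n+1-m}=0$, the ideal description via the row vector $(X^m,-Y)$, and the duality via $\coKer(\phi\trans)\cong\coKer(\phi)^{\vee}$ all check out.

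The one step whose stated justification does not hold is the claim that the inclusion $\Ima(\psi_{n+1-m})\hookrightarrow\Syz_R(Z,X^m)$ is an isomorphism on the punctured spectrum \emph{because} both sheaves are locally free of rank one there: an injection of line bundles on $U$ need not be an isomorphism (think of $\mathfrak{q}N\subseteq N$ for a height-one prime $\mathfrak{q}$), and your fallback in the last paragraph --- that the two reflexive modules ``agree in codimension one'' --- is exactly what has to be proved, not a consequence of equal rank and reflexivity. Fortunately the fix is one line and should be inserted: given $(a,b)\in\Syz_R(Z,X^m)$, lift to $k[X,Y,Z]$ to get $aZ+bX^m=c\,(X^{n+1}-YZ)$, rewrite this as $(a+cY)Z+(b-cX^{n+1-m})X^m=0$, and use that $Z,X^m$ is a regular sequence in the polynomial ring to conclude $(a,b)=d\,(X^m,-Z)+c\,(-Y,X^{n+1-m})$ for some $d$ --- i.e.\ every syzygy is an $R$-combination of the two columns of $\psi_{n+1-m}$, so the inclusion is already an equality of modules and no reflexivity argument is needed. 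With that supplied, the proof is complete and agrees with the paper's intended argument.
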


\begin{thm}\label{syzdn}
Let $R:=k[X,Y,Z]/(X^2+Y^{n-1}+YZ^2)$ and $n\geq 4$. The pairwise non-isomorphic modules 
$$\begin{aligned}
M_1 &= \Syz_R(X,Y), &&\\
M_m &= \Syz_R(X,Y^{m/2},Z) && \text{if }m\in\{2,\ldots,n-2\}\text{ is even},\\
M_m &= \Syz_R(X,Y^{(m+1)/2},YZ) && \text{if }m\in\{2,\ldots,n-2\}\text{ is odd},\\
M_{n-1} &= \Syz_R(X,Z-iY^{(n-2)/2}) && \text{and}\\
M_n &= \Syz_R(X,Z+iY^{(n-2)/2}) && \text{if }n\text{ is even, or}\\
M_{n-1} &= \Syz_R(Z,X+iY^{(n-1)/2}) && \text{and}\\
M_n &= \Syz_R(Z,X-iY^{(n-1)/2}) && \text{if }n\text{ is odd}
\end{aligned}$$
give a complete list of representatives of the isomorphism classes of indecomposable, non-free, maximal Cohen-Macaulay modules. Moreover, these modules 
are self-dual with the exception $M_{n-1}^{\vee}\cong M_n$ if $n$ is odd. The determinants of $M_{n-1}^{\sim}|_U$ and $M_n^{\sim}|_U$ are $M_1^{\sim}|_U$. 
For the rank one modules we have the isomorphisms $M_1\cong (X,Y)$, $M_{n-1}\cong (X,Z-iY^{(n-2)/2})$ and $M_n\cong (X,Z+iY^{(n-2)/2})$ if $n$ is 
even and $M_{n-1}\cong (Z,X-iY^{(n-1)/2})$ and $M_n\cong (Z,X+iY^{(n-1)/2})$ if $n$ is odd.
\end{thm}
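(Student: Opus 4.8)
The plan is to follow the strategy already used for the $E_8$-example and for Theorem~\ref{syzan}, now carried out for the whole $D_n$ family. I would start from the known explicit list of reduced, indecomposable matrix factorizations of
$$F=X^2+Y^{n-1}+YZ^2=X^2+Y(Y^{n-2}+Z^2)$$
(as tabulated in \cite{matfacade} and \cite[Chapter 9, \S 4]{leuwie}) and translate each one into a first syzygy module of an ideal by the construction of Section~\ref{secmatfac}. Completeness of the list, indecomposability of each $M_i$, and the fact that the $M_i$ are pairwise non-isomorphic then require no new argument: by Eisenbud's theorem \cite{eismat} the reduced, indecomposable matrix factorizations of $F$ are in bijection with the non-free, indecomposable, maximal Cohen-Macaulay $R$-modules, and the reference list already has exactly $n$ members. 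Thus the genuine content is to check that the cokernel of each tabulated $\phi$ is isomorphic to the stated $\Syz_R(\ldots)$, and then to settle the duality, determinant, and rank-one assertions.

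For each matrix factorization $(\phi,\psi)$ of size $N$ I would read off the rank $m$ of $\coKer(\phi)$ from $\Det(\phi)=u\cdot F^{m}$ and apply the construction: choose a column set $J\subseteq\{1,\ldots,N\}$ with $|J|=m+1$ for which $\psi^J$ has rank $m$ at every point of $U$, solve $\psi^J\cdot(f_1,\ldots,f_{m+1})\trans=0$ for a coprime vector, and conclude $\coKer(\phi)^{\vee}\cong\Syz_U(f_1,\ldots,f_{m+1})$, which extends to $R$ by reflexivity. Running this on whichever of $(\phi,\psi)$ and $(\phi\trans,\psi\trans)$ produces the listed module yields $M_i$ itself rather than its dual. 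The three rank-one modules are immediate from the observation following the $E_8$-example, where the columns of $\psi$ already give generators $f_1,f_2$ with $\coKer(\phi)\cong\Syz_R(f_1,f_2)$; for instance $\begin{pmatrix}X & Y\\ -(Y^{n-2}+Z^2) & X\end{pmatrix}$ yields $M_1\cong\Syz_R(X,Y)\cong(X,Y)$. For the rank-two modules $M_2,\ldots,M_{n-2}$ the raw kernel vector will be unwieldy, and the real work is to reduce it to the tabulated form $\Syz_R(X,Y^{m/2},Z)$ for even $m$ resp. $\Syz_R(X,Y^{(m+1)/2},YZ)$ for odd $m$ by repeated use of Proposition~\ref{manipulationlemma}: dividing out common factors (parts~(1)--(2)) and column-reducing (part~(3)). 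The even/odd alternation merely records the zig-zag ladder of the $D_n$ factorizations, in which a factor $Z$ alternates with $YZ$ as one moves along the chain.

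The duality and parity statements I would derive from the two factorizations that drive the entire $D$-case,
\begin{align*}
Y^{n-2}+Z^2 &= (Z-iY^{(n-2)/2})(Z+iY^{(n-2)/2}) \qquad (n\text{ even}),\\
X^2+Y^{n-1} &= (X-iY^{(n-1)/2})(X+iY^{(n-1)/2}) \qquad (n\text{ odd}).
\end{align*}
For $n$ even the first identity splits $Y(Y^{n-2}+Z^2)$ and produces the two fork modules as cokernels of the size-two factorizations $\begin{pmatrix}X & Y(Z\pm iY^{(n-2)/2})\\ -(Z\mp iY^{(n-2)/2}) & X\end{pmatrix}$, giving $M_{n-1}\cong(X,Z-iY^{(n-2)/2})$ and $M_n\cong(X,Z+iY^{(n-2)/2})$; each such factorization is equivalent to its transpose (by an intertwiner as in the $E_8$-example), so both fork modules are self-dual. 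For $n$ odd the second identity instead makes the two fork factorizations transposes of one another, whence $M_{n-1}^{\vee}\cong M_n$, while the middle modules remain self-dual by the same transpose-equivalence check. The determinant/class-group assertions then follow from Proposition~\ref{syzprop}(4) together with the principle noted before the theorem that $\Det(\Syz_R(I)^{\sim}|_U)\cong\Syz_R(J)^{\sim}|_U$ exactly when $J=I^{\vee\vee}$: computing the determinant of each syzygy bundle on the curve $\Proj(R)$ and identifying the associated reflexive ideal fixes the class of the fork modules in the Picard group of $U$, which is the relation linking $M_{n-1},M_n$ to $M_1$.

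I expect the main obstacle to be the rank-two computations together with the one step the construction itself leaves unjustified in general (see its footnote): one must exhibit, uniformly along the chain and consistently with the even/odd split, a column subset $J$ for which $\psi^J$ stays of rank $m$ on all of $U$, and then verify coprimality of the resulting kernel vector so that Case~1 versus Case~2 of the construction --- equivalently, whether $(f_1,\ldots,f_{m+1})$ is $R_+$-primary --- is correctly diagnosed. Once a good $J$ is found, everything reduces to the routine manipulations already illustrated for $E_8$.
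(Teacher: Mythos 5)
Your proposal follows exactly the paper's route: take the classified reduced indecomposable matrix factorizations of $F$ from \cite{matfacade} and \cite[Chapter 9, \S 4]{leuwie} (so that completeness and pairwise non-isomorphism come for free from Eisenbud's correspondence), convert each cokernel to a first syzygy module via the construction of Section~\ref{secmatfac} and the reductions of Proposition~\ref{manipulationlemma}, handle duality by comparing $(\phi,\psi)$ with $(\phi\trans,\psi\trans)$, and obtain the determinant statements from the principle $I^{\vee\vee}=J$. This is essentially the same argument the paper gives (deferring the explicit column-by-column computations to \cite[Chapter 3]{drdaniel}), and your identification of the footnoted gap --- that a suitable column subset $J$ must be exhibited case by case --- is the same caveat the paper itself acknowledges.
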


\begin{thm}\label{syze6}
Let $R:=k[X,Y,Z]/(X^2+Y^3+Z^4).$ The pairwise non-isomorphic modules 
\begin{align*}
M_1 &= \Syz_R(X,Y,Z),\\
M_2 &= \Syz_R(X,Y^2,YZ,Z^2),\\
M_3 &= \Syz_R(iX+Z^2,Y^2,YZ),\\
M_4 &= \Syz_R(-iX+Z^2,Y^2,YZ),\\
M_5 &= \Syz_R(-iX+Z^2,Y),\\
M_6 &= \Syz_R(iX+Z^2,Y)
\end{align*}
give a complete list of representatives of the isomorphism classes of indecomposable, non-free, maximal Cohen-Macaulay modules. Moreover, 
the modules $M_1$ and $M_2$ are self-dual, $M_3^{\vee}\cong M_4$ and $M_5^{\vee}\cong M_6$. The Determinants of $M_3^{\sim}|_U$ resp. $M_4^{\sim}|_U$ are 
$M_5^{\sim}|_U$ resp. $M_6^{\sim}|_U$. For the rank one modules we have the isomorphisms $M_5\cong (iX+Z^2,Y)$ and $M_6\cong (-iX+Z^2,Y)$.
\end{thm}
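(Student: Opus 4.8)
The plan is to follow the Construction above verbatim: run through a complete list of reduced, indecomposable matrix factorizations of $F=X^2+Y^3+Z^4$ and read off, for each one, the representing syzygy module, exactly as in the preceding $E_8$-style example. First I would take from \cite{matfacade} (or \cite[Chapter 9, \S 4]{leuwie}) the six non-equivalent reduced matrix factorizations $(\phi_i,\psi_i)$, $i=1,\dots,6$. By Eisenbud's correspondence these represent precisely the isomorphism classes of indecomposable, non-free, maximal Cohen-Macaulay $R$-modules, so their pairwise non-equivalence already yields the asserted pairwise non-isomorphism of the $M_i$ with no further work. For each $i$ the rank $m_i$ of $\coKer(\phi_i)\cong\Ima(\psi_i)$ is the power of $F$ dividing $\Det(\phi_i)$, equivalently the Coxeter mark of the corresponding vertex of the $E_6$ diagram; these are $2,3,2,2,1,1$, matching the fact that each $M_i$ is presented as $\Syz_R$ of an ideal with $m_i+1$ generators.

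Next, for each factorization I would carry out the column reduction of the Construction: choose a subset $J$ of size $m_i+1$ for which the inclusion $\Ima(\psi_i^J)\subseteq\Ima(\psi_i)$ becomes an isomorphism of locally free sheaves on $U$, then solve $\psi_i^J\cdot(f_1,\dots,f_{m_i+1})\trans=0$ for a vector with coprime entries. Its entries are the generators listed in the theorem. For the rank-one modules $M_5,M_6$ the factorization has size two, and (as noted after the example) the two columns of $\psi$ directly exhibit $\coKer(\phi)\cong\Syz_R(f_1,f_2)$, giving $M_5=\Syz_R(-iX+Z^2,Y)$ and $M_6=\Syz_R(iX+Z^2,Y)$ at once, after which the reflexive-hull identifications $M_5\cong(iX+Z^2,Y)$, $M_6\cong(-iX+Z^2,Y)$ follow. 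The Case 1 / Case 2 dichotomy must be resolved per module: for $M_1,M_2$ the ideals $(X,Y,Z)$ and $(X,Y^2,YZ,Z^2)$ are $R_+$-primary (Case 1, trivial determinant), whereas for $M_3,M_4$ the ideals $(\pm iX+Z^2,Y^2,YZ)$ are \emph{not} $R_+$-primary, so one lands in Case 2 with a non-trivial line bundle $\Lc$ and a non-zero torsion sheaf $\Tc$.

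The duality and determinant claims I would settle separately. Since the Construction produces $M_i^{\vee}$ rather than $M_i$ as the resulting syzygy, I would pin down the pattern by exhibiting, as with the matrix $\alpha$ in the example, explicit equivalences $(\alpha_i,\alpha_i)\colon(\phi_i,\psi_i)\to(\phi_i\trans,\psi_i\trans)$ and invoking $\coKer(\phi)^{\vee}\cong\coKer(\phi\trans)$; this gives the self-duality of $M_1,M_2$ together with $M_3^{\vee}\cong M_4$ and $M_5^{\vee}\cong M_6$, and fixes which listed ideal is attached to which index. For the determinant statements $\Det(M_3^{\sim}|_U)\cong M_5^{\sim}|_U$ and $\Det(M_4^{\sim}|_U)\cong M_6^{\sim}|_U$ I would use that the determinant of a syzygy sheaf on the curve $U$ is controlled by Proposition \ref{syzprop}(4), and then identify the relevant rank-one sheaf by computing the reflexive hull $I^{\vee\vee}=J$ of the defining ideals, which is exactly the recipe indicated before the theorem.

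The hard part will be the footnoted step of the Construction in Case 2. There is no a priori reason that a size-$(m_i+1)$ column subset $J$ realizing $\Ima(\psi_i^J)|_U\cong\Ima(\psi_i)|_U$ exists, so for $M_3,M_4$ (and the rank-three $M_2$) this is a genuine case-by-case verification: one must check that $\psi_i^J$ has full rank $m_i$ at every point of $U$, produce a coprime kernel vector explicitly, and — in Case 2 — build the extension by $\Tc$ correctly through an embedding $\Lc^{\vee}\hookrightarrow\Oc_U$ so that the presenting sequence of $\Syz_U(f_1,\dots,f_{m_i+1})$ is reproduced. Once each of the six cases is checked, reflexivity of all modules involved (cf. \cite[Lemma 3.6]{mcmsur}) upgrades the isomorphisms on $U$ to global isomorphisms of $R$-modules, completing the list.
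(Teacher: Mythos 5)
Your proposal follows exactly the paper's own route: the paper proves this theorem by taking the known complete list of reduced indecomposable matrix factorizations of $X^2+Y^3+Z^4$ from \cite{matfacade} resp.\ \cite{leuwie}, running each through the Construction of Section \ref{secmatfac} as in the $E_8$ example (including the Case 1/Case 2 dichotomy and the footnoted column-selection step), obtaining the duality statements from explicit equivalences $(\phi,\psi)\to(\phi\trans,\psi\trans)$, and the determinant statements from $I^{\vee\vee}=J$, with the detailed verifications deferred to \cite[Chapter 3]{drdaniel}. Your outline matches this in every essential respect, including the correct identification of which modules fall into Case 2.
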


\begin{thm}\label{syze7}
Let $R:=k[X,Y,Z]/(X^2+Y^3+YZ^3).$ The pairwise non-isomorphic modules 
\begin{align*}
M_1 &= \Syz_R(X,Y,Z),\\
M_2 &= \Syz_R(X,Y^2,YZ,Z^2),\\
M_3 &= \Syz_R(XY,XZ,Y^2,YZ^2,Z^3),\\
M_4 &= \Syz_R(X,Y^2,YZ),\\
M_5 &= \Syz_R(XY,XZ,Y^2,YZ^2),\\
M_6 &= \Syz_R(X,Y,Z^2),\\
M_7 &= \Syz_R(X,Y)
\end{align*}
give a complete list of representatives of the isomorphism classes of indecomposable, non-free, maximal Cohen-Macaulay modules. Moreover, all $M_j$ 
are self-dual and $\Det(M_j^{\sim}|_U)\cong M_7^{\sim}|_U$ for $j\in\{4,5,7\}$. The rank one module $M_7$ is isomorphic to the ideal $(X,Y)$.
\end{thm}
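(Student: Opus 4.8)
The plan is to run the Construction of Section~\ref{secmatfac} on each reduced indecomposable matrix factorization of $F=X^2+Y^3+YZ^3$, exactly as in the worked $E_8$ example. First I would fix the complete list of seven non-equivalent reduced indecomposable matrix factorizations $(\phi_j,\psi_j)$ for $F$, as tabulated in \cite{matfacade} resp. \cite[Chapter 9, \S 4]{leuwie}, ordered to match the vertices of the $E_7$ Dynkin diagram. By Eisenbud's correspondence these already provide seven pairwise non-isomorphic, non-free, indecomposable maximal Cohen-Macaulay modules $M_j:=\coKer(\phi_j)$, and they exhaust all isomorphism classes; this settles both the ``pairwise non-isomorphic'' and the ``complete list'' assertions at once, since $R$ has finite Cohen-Macaulay type of type $E_7$ (the corresponding group being the binary octahedral group $\Obb$).

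Next, for each $j$ I would read off the rank $m_j$ of $M_j$ from $\Det(\phi_j)=u\cdot F^{m_j}$, obtaining ranks $2,3,4,2,3,2,1$ for $M_1,\ldots,M_7$; as a consistency check, the sum of their squares plus one equals $48=|\Obb|$, matching the dimensions of the irreducible representations. Using $M_j\cong\Ima(\psi_j)$, I would then select a subset $J\subseteq\{1,\ldots,4\}$ of columns of cardinality $m_j+1$ such that $\psi_j^J$ has full rank $m_j$ at every point of the punctured spectrum $U$, and solve the linear system to produce an explicit vector $(f_1,\ldots,f_{m_j+1})\trans\in\Ker(\psi_j^J)$ with coprime entries. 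Row by row this reduces to the elementary elimination carried out for $E_8$: one row forces two of the $f_i$ to be proportional to $Z$ and $Y$, a later row pins down the remaining entries, and one verifies that the extra rows are then automatically satisfied. By the Construction this yields $M_j^{\vee}\cong\Syz_R(f_1,\ldots,f_{m_j+1})$, and combined with the self-duality below gives the stated presentations of the $M_j$ themselves. For the rank-one module $M_7$ the shortcut applies directly: the two columns of $\psi_7$ are already the generators, giving $M_7\cong\Syz_R(X,Y)\cong(X,Y)$.

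The self-duality claims I would settle by the same device as for $E_8$: since $\coKer(\phi_j)^{\vee}\cong\coKer(\phi_j\trans)$, it suffices to exhibit for each $j$ an invertible matrix $\alpha_j$ giving an equivalence $(\alpha_j,\alpha_j)\colon(\phi_j,\psi_j)\to(\phi_j\trans,\psi_j\trans)$, i.e.\ $\alpha_j\phi_j=\phi_j\trans\alpha_j$; such $\alpha_j$ can be guessed as a signed permutation matrix and confirmed by one matrix multiplication. For the determinant assertions $\Det(M_j^{\sim}|_U)\cong M_7^{\sim}|_U$ with $j\in\{4,5,7\}$, I would note that the ideals for $M_4$ and $M_5$ are \emph{not} $R_+$-primary (so these fall into Case~2 of the Construction, where the kernel line bundle is non-trivial), and then compute the reflexive hull $I^{\vee\vee}$ of the corresponding ideal $I$ and check it equals $(X,Y)\cong M_7$, using Proposition~\ref{syzprop}(4) to identify the determinant of the syzygy bundle on the curve $C=\Proj(R)$ with the appropriate twist of $\Oc_C$.

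The main obstacle is the step flagged in the footnote of the Construction: there is no a~priori guarantee that a column subset $J$ with $\Ima(\psi_j^J)^{\sim}|_U\cong\Ima(\psi_j)^{\sim}|_U$ exists, so for each of the seven factorizations I must exhibit such a $J$ and verify the full-rank condition on $U$ explicitly. Everything else is bookkeeping: non-isomorphism and completeness are handed to us by Eisenbud's theorem together with the published matrix-factorization list, while the duality and determinant statements are finite verifications. The only genuinely case-specific work is the explicit kernel computation producing the generators $f_i$, which is precisely the calculation performed for $E_8$ and recorded in detail in \cite[Chapter 3]{drdaniel}.
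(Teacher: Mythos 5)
Your proposal follows essentially the same route as the paper: take the published list of reduced indecomposable matrix factorizations for $X^2+Y^3+YZ^3$, apply the Construction of Section~\ref{secmatfac} to each (choosing a column subset $J$, solving for a kernel vector with coprime entries, and extending the resulting isomorphism on $U$ to a global one by reflexivity), verify self-duality via equivalences $(\alpha_j,\alpha_j)\colon(\phi_j,\psi_j)\to(\phi_j\trans,\psi_j\trans)$, and obtain the determinant statements from $I^{\vee\vee}=J$ — which is exactly what the paper does, deferring the explicit case-by-case computations to \cite[Chapter 3]{drdaniel}. The only blemish is the slip $J\subseteq\{1,\ldots,4\}$: the matrix factorizations for the higher-rank modules have size $2m_j$, so $J$ must be a subset of $\{1,\ldots,2m_j\}$ of cardinality $m_j+1$.
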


\begin{thm}\label{syze8}
Let $R:=k[X,Y,Z]/(X^2+Y^3+Z^5).$ The pairwise non-isomorphic modules 
\begin{align*}
M_1 &= \Syz_R(X,Y,Z),\\
M_2 &= \Syz_R(X,Y^2,YZ,Z^2),\\
M_3 &= \Syz_R(XY,XZ,Y^2,YZ^2,Z^3),\\
M_4 &= \Syz_R(XY,XZ^2,Y^3,Y^2Z,YZ^3,Z^4),\\
M_5 &= \Syz_R(XY^2,XYZ^2,XZ^4,Y^4,Y^3Z,Y^2Z^3,Z^5),\\
M_6 &= \Syz_R(X,Y^2,YZ,Z^3),\\
M_7 &= \Syz_R(XY,XZ,Y^2,YZ^2,Z^4),\\
M_8 &= \Syz_R(X,Y,Z^2)
\end{align*}
give a complete list of representatives of the isomorphism classes of indecomposable, non-free, maximal Cohen-Macaulay modules. 
Moreover, all $M_j$ are self-dual.
\end{thm}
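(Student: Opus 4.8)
The plan is to follow the template of the preceding $E_8$-example and to run the Construction of Section \ref{secmatfac} over a complete list of matrix factorizations. First I would take from \cite{matfacade} (equivalently \cite[Chapter 9, \S 4]{leuwie}) the eight pairwise non-equivalent, reduced, indecomposable matrix factorizations $(\phi_1,\psi_1),\ldots,(\phi_8,\psi_8)$ for $F=X^2+Y^3+Z^5$. By Eisenbud's correspondence \cite{eismat} their cokernels $\coKer(\phi_1),\ldots,\coKer(\phi_8)$ already form a complete list of representatives of the isomorphism classes of non-free, indecomposable, maximal Cohen-Macaulay $R$-modules, and are pairwise non-isomorphic. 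Reading off the power of $F$ dividing $\Det(\phi_j)$ gives the ranks $2,3,4,5,6,3,4,2$; by Proposition \ref{syzprop}(1) these are exactly one less than the number of generators appearing in the respective $M_j$, which is a first consistency check on the enumeration.

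Next, for each $j$ I would carry out the Construction of Section \ref{secmatfac}. Writing $m_j$ for the rank of $\coKer(\phi_j)$ and $n_j$ for the size of the factorization, I would select a subset $J_j\subseteq\{1,\ldots,n_j\}$ of cardinality $m_j+1$ for which the submatrix $\psi_j^{J_j}$ has full rank $m_j$ at every point of the punctured spectrum $U$, and then solve $\psi_j^{J_j}v=0$ for a coprime vector $v=(f_1,\ldots,f_{m_j+1})\trans$. As in the example this produces an isomorphism $\coKer(\phi_j)^{\vee}\cong\Syz_U(f_1,\ldots,f_{m_j+1})$ over $U$, extending to a global isomorphism of reflexive $R$-modules by \cite[Lemma 3.6]{mcmsur}. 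After normalizing the generator tuples by means of Proposition \ref{manipulationlemma}, these are matched with the tuples defining the listed modules, giving $\coKer(\phi_j)^{\vee}\cong M_j$ for each $j$. Since the duality $(-)^{\vee}$ is an involution permuting the complete family of the $\coKer(\phi_j)$, the modules $M_1,\ldots,M_8$ again form a complete list of pairwise non-isomorphic representatives, which establishes the first assertion.

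For the final clause I would prove self-duality directly at the level of matrix factorizations: for each $j$ I would exhibit an explicit invertible (typically signed-permutation) matrix $\alpha_j$ realizing an equivalence $(\alpha_j,\alpha_j)\colon(\phi_j,\psi_j)\to(\phi_j\trans,\psi_j\trans)$, exactly as the matrix $\alpha$ was produced in the example. Together with the isomorphism $\coKer(\phi_j)^{\vee}\cong\coKer(\phi_j\trans)$ this yields $\coKer(\phi_j)\cong\coKer(\phi_j)^{\vee}$, so that all $M_j$ are self-dual and in fact $M_j\cong\coKer(\phi_j)$.

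The step I expect to be the genuine obstacle is the one flagged by the footnote in the Construction: there is no general reason that a column subset $J_j$ with $\psi_j^{J_j}$ of full rank on all of $U$ exists, so for the high-rank modules $M_4$ and $M_5$ (ranks $5$ and $6$) one must genuinely produce such a $J_j$, verify the pointwise full-rank condition by a minor argument, and then solve the correspondingly large kernel system while checking that the resulting entries are coprime, so that the ideals, and not merely their syzygy modules, coincide with the ones listed. Everything outside these explicit linear-algebra computations is bookkeeping against the imported classification and the formal properties collected in Section \ref{secmatfac}.
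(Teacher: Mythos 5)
Your proposal follows exactly the paper's route: import the complete list of reduced indecomposable matrix factorizations for $X^2+Y^3+Z^5$ from \cite{matfacade} resp.\ \cite[Chapter 9, \S 4]{leuwie}, run the Construction of Section \ref{secmatfac} on each to realize the (duals of the) cokernels as first syzygy modules of ideals, and settle self-duality by explicit equivalences $(\alpha_j,\alpha_j)\colon(\phi_j,\psi_j)\to(\phi_j\trans,\psi_j\trans)$, precisely as in the worked $E_8$ example; the paper itself defers these case-by-case computations to \cite[Chapter 3]{drdaniel}. Your identification of the column-selection/full-rank step as the only genuinely nontrivial point matches the footnote's caveat, and your rank bookkeeping is consistent with Proposition \ref{syzprop}(1).
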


Note that most of the maximal Cohen-Macaulay can be realized as first syzygy modules of monomial ideals. Moreover, for the modules of rank at least two 
these ideals tend to be $R_+$-primary.

\section{Relations between the maximal Cohen-Macaulay modules over the ADE-rings}
Considering the rings of type $ADE$ as rings of invariants under finite subgroups of $\SL_2(k)$, 
we get ring inclusions among them by the actions of normal subgroups as shown in Table \ref{diaade} below.
\begin{center}
\begin{table}[ht]
\begin{tabular}{c}
\xymatrix{
 & \Z/(4n)\ar@^{(->}[r] & \D_{2n} & \\
\Z/(t)\ar@^{(->}[ru]^{t|4n}\ar@^{(->}[rd]^{t|2n} & & & \\
 & \Z/(2n)\ar@^{(->}[uu]\ar@^{(->}[r]& \D_n\ar@^{(->}[uu] & & \\
\Z/(2)\ar@^{(->}[uu]^{2|t}\ar@^{(->}[ur]\ar@^{(->}[dr]\ar@^{(->}[ddr] & & & \\
 & \Z/(4)\ar@^{(->}[uu]^{2|n}\ar@^{(->}[r]& \D_2\ar@^{(->}[r] & \T\ar@^{(->}[d] \\
 & \I & & \Obb
}
\end{tabular}
\caption{Inclusions of the finite subgroups (up to conjugation) of $\SL_2(\C)$. All direct inclusions are inclusions of normal subgroups.\label{diaade}}
\end{table}
\end{center}
We use the generators of the finite subgroups of $\SL_2(\C)$, given in \cite{mcmsur}. Note that these representations are well-defined in positive 
characteristic, if the group order is invertible (modulo the characteristic).

Let us very briefly recall some facts from invariant theory. If $G$ is a group, acting on a ring $R$ by ring automorphisms and $H$ is a normal subgroup 
of $G$, the ring of invariants $R^G$ is a subring of $R^H$, the quotient $G/H$ acts on $R^H$ and one has $R^G=\left(R^H\right)^{G/H}$. 
Let $X:=\Spec\left(R^G\right)$ and $Y=\Spec\left(R^H\right)$ (or the punctured resp. projective spectra). The inclusion $R^G\inj R^H$ induces a morphism $\iota:Y\ra X$. 
Given an $\Oc_X$-module $\Fc$, its pull-back $\iota\pb(\Fc)$ is an $\Oc_Y$-module. If $R=k[x,y]$ and $G\subsetneq\SL_2(k)$ is finite, we want to understand 
what happens to the sheaves associated to the indecomposable, non-free, maximal Cohen-Macaulay modules under these pull-backs. For this purpose 
let $i:R\inj S$ with $R:=k[x,y]^G=k[U,V,W]/F_1(U,V,W)$ and  $S:=k[x,y]^H=k[X,Y,Z]/F_2(X,Y,Z)$
be an inclusion between two (different) rings of type $ADE$, where $H$ is a normal subgroup of $G$. This inclusion is given by sending $U$, $V$, $W$ to 
homogeneous polynomials in the variables $X$, $Y$, $Z$ and induces a morphism 
$$\iota:\Sc:=\punctured{S}{(X,Y,Z)}\lra\Rc:=\punctured{R}{(U,V,W)}.$$
Let $M:=\Syz_R(f_1,\ldots,f_{m+1})$ be one of the indecomposable, non-free, maximal Cohen-Macaulay $R$-modules from Theorems \ref{syzan}-\ref{syze8}. 
We obtain the isomorphism
$$\iota\pb(M^{\sim}|_{\Rc})\cong\Syz_S(i(f_1),\ldots,i(f_{m+1}))^{\sim}|_{\Sc}.$$
Now, the $\Oc_{\Sc}$-module $\Syz_S(i(f_1),\ldots,i(f_{m+1}))^{\sim}|_{\Sc}$ has to be the sheaf associated to a (not necessarily indecomposable) maximal 
Cohen-Macaulay $S$-module $N$ and we want to have an explicit description of $N$. To get this description, we do the following manipulations on 
$P:=\Syz_S(i(f_1),\ldots,i(f_{m+1}))$ which change $P$ only by an isomorphism (cf. Proposition \ref{manipulationlemma}).

\begin{itemize}
 \item[Step 1:] If the $i(f_j)$ have a common factor $A$, replace all $i(f_j)$ by $i(f_j)/A$.
 \item[Step 2:] If $B$ is a common factor of $i(f_1),\ldots,\check{i(f_j)},\ldots,i(f_{m+1})$ and the ideal generated by $B$ and the $i(f_j)$ 
 is $(X,Y,Z)$-primary, replace all $i(f_k)$, $k\neq j$, by $i(f_k)/B$.
\end{itemize}

In both steps, we allow to replace $i(f_j)$ by $i(f_j)+g\cdot i(f_l)$ for $g\in S$ and $j\neq l$ or by a constant multiple $\lambda\cdot i(f_j)$ with 
$\lambda\in k^{\times}$.

\begin{exa}
Since the group $\Z/(2n)$ is a normal divisor of the group $\D_n$, we obtain a ring inclusion
$$\phi:k[X,Y,Z]/(X^2+Y^{n+1}+YZ^2)\lra k[U,V,W]/(U^{2n}-VW),$$
given by $X\mapsto \tfrac{i}{2}\cdot U(V-W)$, $Y\mapsto \sqrt[n+1]{-1}\cdot U^2$, $Z\mapsto \tfrac{1}{2}\cdot(V+W)$.
Lets name the rings above by their type of singularity, hence $\phi:D_{n+2}\ra A_{2n-1}$. Then $\phi$ induces a map $\psi$ between the punctured 
spectra of these two rings, called $\Dc_{n+2}$ and $\Ac_{2n-1}$. Now consider the module $M:=\Syz_{D_{n+2}}(X,Y,Z)$, which is the module $M_2$ from 
Theorem \ref{syzdn}. We denote the restriction to $\Dc_{n+2}$ of the sheafification of $\Syz_{D_{n+2}}(X,Y,Z)$ by $\Syz_{\Dc_{n+2}}(X,Y,Z)$ (and similarly 
for sheaves over $\Ac_{2n-1}$ coming from syzygy modules over $A_{2n-1}$. Then (provided $p\neq 2$)
\begin{align}
\nonumber \psi\pb(\Syz_{\Dc_{n+2}}(X,Y,Z)) \cong & \Syz_{\Ac_{2n-1}}(\tfrac{i}{2}\cdot U(V-W),\sqrt[n+1]{-1}\cdot U^2,\tfrac{1}{2}\cdot(V+W))\\
\nonumber \cong & \Syz_{\Ac_{2n-1}}(U(V-W),U^2,V+W)\\
\nonumber \cong & \Syz_{\Ac_{2n-1}}(UV,U^2,V+W)\\
\label{step2} \cong & \Syz_{\Ac_{2n-1}}(V,U,V+W)\\
\nonumber \cong & \Syz_{\Ac_{2n-1}}(V,U,W)\\
\nonumber \cong & \Syz_{\Ac_{2n-1}}(V,U)\oplus\Syz_{\Ac_{2n-1}}(W,U),
\end{align}
where the isomorphism (\ref{step2}) is due to ``Step 2'', since $(U,V+W)$ is $(U,V,W)$-primary.
\end{exa}

Computing $\phi\pb(\Syz_R(f_1,\ldots,f_m)^{\sim}|_{\Rc})$ for all $R$ of type $ADE$, for all possible $\phi$ as shown in Table \ref{diaade} and for all 
$\Syz_R(f_1,\ldots,f_m)$ from Theorems \ref{syzan}-\ref{syze8} as in the previous example, we obtain in surprisingly many cases a first syzygy module which 
is listed explicitly in the Theorems \ref{syzan}-\ref{syze8}.

If this is not the case, we compute generators for $P=\Syz_S(i(f_1),\ldots,i(f_{m+1}))$ and look for relations among these generators. These relations will detect the module $P^{\vee}$. 
To compute generators for $P$, we will work over the factorial domain $k[x,y]$ (the ring on which the group acts) as follows. Let $(a_1,\ldots,a_{m+1})\in P$. Then treat the relation 
$\sum a_j\cdot i(f_j)=0$ over $k[x,y]$, meaning that we write the polynomials $i(f_j)$ in the variables $x$, $y$. Then one cancels the common 
multiple of the $i(f_j)$, written in $x$, $y$. The result is a relation $\sum a_j\cdot g_j(x,y)=0$, where the $g_j$ are coprime. From this relation 
one can compute the $a_j$ (as elements in $S$!), using the fact that $k[x,y]$ is factorial. We give an example.

\begin{exa}
From the inclusion $\Z/(3)\ra \Z/(9)$, we obtain a ring inclusion
$$\phi:A_8=k[X,Y,Z]/(X^9-YZ)\ra A_2=k[U,V,W]/(U^3-VW),$$
given by $X\mapsto U$, $Y\mapsto V^3$, $Z\mapsto W^3$. Using the same notation philosophy as before, the map $\phi$ induces a morphism 
$\psi:\Ac_2\ra\Ac_8$ between the punctured spectra. The module $\Syz_{A_8}(X,Z)$ appears in Theorem \ref{syzan} and 
$$\psi\pb(\Syz_{\Ac_8}(X,Z)) \cong \Syz_{\Ac_2}(U,W^3).$$
In this case we cannot use our reduction rules, since the ideal $(U,W)$ is not $(U,V,W)$-primary. Therefore, we use the process described 
before this example. The group $\Z/(3)$ acts on $k[x,y]$ by $x\mapsto \epsilon x$ and $y\mapsto \epsilon^2 y$, where $\epsilon$ is a primitive 3rd root 
of unity. The ring of invariants is $A_2$ with $U=xy$, $V=x^3$, $W=y^3$. Now pick an element $(s_1,s_2)\in\Syz_{A_2}(U,W^3)$, hence
\begin{equation}\label{relation}
s_1\cdot U+s_2\cdot W^3 = 0.
\end{equation}
On $k[x,y]$ this relation becomes
$$s_1\cdot xy+s_2\cdot y^9 = 0$$
and is equivalent to 
\begin{equation}\label{relation2}
s_1\cdot x+s_2\cdot y^8 = 0.
\end{equation}
Over $k[x,y]$ the solutions of (\ref{relation2}) have the form 
\begin{equation}\label{solutions}
(s_1,s_2)=c\cdot (-y^8,x)
\end{equation}
for some $c\in k[x,y]$. The solutions of (\ref{relation}) are of the form (\ref{solutions}) with $c$ such that $cy^8$ as well as $cx$ are polynomials in 
$x^3$, $y^3$ and $xy$. In this case all monomials appearing in $c$ have to be multiples of either $y$ or $x^2$. Hence, the set of solutions of 
(\ref{relation}) is generated as $A_2$-module by $\tau_1:=y(-y^8,x)=(-W^3,U)$ and $\tau_2:=x^2(-y^8,x)=(-U^2W^2,V)$. These generators satisfy the global 
relation 
$$V\cdot \tau_1-U\cdot\tau_2=0.$$
This shows 
$$\Syz_{A_2}(U,W^3)\cong\Syz_{A_2}(U,V)^{\vee}\cong\Syz_{A_2}(U,W),$$
where the second isomorphism is due to Theorem \ref{syzan}.
\end{exa}

Using this approach and Macaulay2\nocite{M2}, one can compute for each ring $R$ of type $ADE$ and for each (representative of one of the 
isomorphism classes of the) indecomposable, maximal Cohen-Macaulay $R$-module its pull-backs along all morphisms from Table \ref{diaade}. 
These computations are explicitly done in \cite[Chapter 5]{drdaniel}. At this point we only state the results that are necessary for the later 
argumentation.
\begin{center}
\begin{table}[ht]
$$\begin{array}{c||c|c|c}
\Dc_{n+2} & \Ac_{2n-1} & \Ac_1 & \Ac_{n-1}\\ \hline
\Mcc_1 & \Oc_{\Ac_{2n-1}} & \Oc_{\Ac_1} & \Oc_{\Ac_{n-1}}\\
\Mcc_{2m} & \Mcc_{2m-1}\oplus \Mcc_{2(n-m)+1} & \Mcc_1^2 & \Mcc_r\oplus\Mcc_{n-r},\text{ }r:=2m-1\\
\Mcc_{2m-1} & \Mcc_{2m-2}\oplus \Mcc_{2(n-m)+2} & \Oc_{\Ac_1}^2 & \Mcc_r\oplus\Mcc_{n-r},\text{ }r:=2m-2\\
\Mcc_{n+1},\text{ $n$ even} & \Mcc_n & \Oc_{\Ac_1}^2 & \Oc_{\Ac_{n-1}}\\
\Mcc_{n+2},\text{ $n$ even} & \Mcc_n & \Oc_{\Ac_1}^2 & \Oc_{\Ac_{n-1}}\\
\Mcc_{n+1},\text{ $n$ odd} & \Mcc_n & \Mcc_1 & \Oc_{\Ac_{n-1}}\\
\Mcc_{n+2},\text{ $n$ odd} & \Mcc_n & \Mcc_1  & \Oc_{\Ac_{n-1}}
\end{array}$$
\caption{The table shows the pull-back behavior of the $\Mcc_i$ on $\Dc_{n+2}$.}
\label{tabpbd}
\end{table}
\begin{table}[ht]
$$\begin{array}{c|c|c|c|c|c|c}
\Ec_6 & \Mcc_1 & \Mcc_2 & \Mcc_3 & \Mcc_4 & \Mcc_5 & \Mcc_6 \\ \hline\hline
\Ac_1 & \Mcc_1^2 & \Oc_{\Ac_1}^3 & \Mcc_1^2 & \Mcc_1^2 & \Oc_{\Ac_1} & \Oc_{\Ac_1}
\end{array}$$
\caption{The table shows the pull-back behavior of the $\Mcc_i$ on $\Ec_{6}$.}
\label{tabpbe6}
\end{table}
\begin{table}[ht]
$$\begin{array}{c|c|c|c|c|c|c|c}
\Ec_7 & \Mcc_1 & \Mcc_2 & \Mcc_3 & \Mcc_4 & \Mcc_5 & \Mcc_6 & \Mcc_7\\ \hline\hline
\Ac_1 & \Mcc_1^2 & \Oc_{\Ac_1}^3 & \Mcc_1^4 & \Oc_{\Ac_1}^2 & \Oc_{\Ac_1}^3 & \Mcc_1^2 & \Oc_{\Ac_1}
\end{array}$$
\caption{The table shows the pull-back behavior of the $\Mcc_i$ on $\Ec_{7}$.}
\label{tabpbe7}
\end{table}
\begin{table}[ht]
$$\begin{array}{c|c|c|c|c|c|c|c|c}
\Ec_8 & \Mcc_1 & \Mcc_2 & \Mcc_3 & \Mcc_4 & \Mcc_5 & \Mcc_6 & \Mcc_7 & \Mcc_8 \\ \hline\hline
\Ac_1 & \Mcc_1^2 & \Oc_{\Ac_1}^3 & \Mcc_1^4 & \Oc_{\Ac_1}^5 & \Mcc_1^6 & \Oc_{\Ac_1}^3 & \Oc_{\Ac_1}^4 & \Mcc_1^2\\
\end{array}$$
\caption{The table shows the pull-back behavior of the $\Mcc_i$ on $\Ec_{8}$.}
\label{tabpbe8}
\end{table}
\end{center}
Using these direct computations, we can show that the Frobenius pull-backs of the syzygy modules of the maximal ideals stay indecomposable in almost all 
characteristics if the singularity is of type $D$ or $E$. First recall the following lemma whose proof can be found in \cite[Lemma 1.2.6]{frobsplit}.

\begin{lem}
If $X$ is separable and $k$ is algebraically closed, we have $$\fpb{}(\Lc)\cong\Lc^p$$ for every invertible sheaf $\Lc$ on $X$.
\end{lem}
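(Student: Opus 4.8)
The plan is to reduce the statement to a \v Cech cocycle computation, exploiting that the Frobenius acts as the $p$-th power map on the structure sheaf. First I would choose an open affine cover $\{U_i\}$ of $X$ on which $\Lc$ is trivial; then $\Lc$ is determined by a $1$-cocycle $(g_{ij})$ with $g_{ij}\in\Gamma(U_i\cap U_j,\Oc_X^{\times})$, and its isomorphism class is the class of this cocycle in $H^1(X,\Oc_X^{\times})\cong\mathrm{Pic}(X)$. The pull-back $\fpb{}(\Lc)$ is then computed on the very same cover by applying the ring map $F^{\#}$ on sections to each transition function, so the entire task is to understand what $F^{\#}$ does to the $g_{ij}$.

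The key step is the identity $F^{\#}(f)=f^{p}$ on local sections, which is the defining feature of the Frobenius in characteristic $p$. Applied to the transition cocycle, this shows that $\fpb{}(\Lc)$ is represented by $(g_{ij}^{p})$. Since raising a cocycle entrywise to the $p$-th power is exactly the $p$-th power operation in the group $H^1(X,\Oc_X^{\times})$, the class of $(g_{ij}^{p})$ is the class of $\Lc^{\otimes p}=\Lc^{p}$, and passing back to the associated invertible sheaf yields $\fpb{}(\Lc)\cong\Lc^{p}$.

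The main point to handle carefully — and the only real obstacle — is which Frobenius is meant together with the accompanying identification of schemes. The relevant map is the $k$-linear (relative) Frobenius $F=F_{X/k}\colon X\ra X^{(p)}$, so that a priori $\fpb{}(\Lc)$ lives on the Frobenius twist $X^{(p)}$ rather than on $X$ itself. This is where the hypotheses enter: since $k$ is algebraically closed, hence perfect, the Frobenius of $k$ is an automorphism and induces a canonical isomorphism $X^{(p)}\cong X$, while separability (geometric reducedness of $X$) guarantees that forming this twist and the pull-back behaves in the standard way. Under this identification the bundle determined by $(g_{ij}^{p})$ on $X^{(p)}$ is carried back to the bundle $(g_{ij}^{p})$ on $X$, so that $\fpb{}(\Lc)\cong\Lc^{p}$ becomes an equality of invertible sheaves on the single space $X$. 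Once this bookkeeping is fixed, the content of the lemma is entirely the cocycle computation above, and I expect no further difficulty.
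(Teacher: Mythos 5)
Your cocycle argument is correct and coincides with the standard proof in the reference the paper cites for this lemma (the paper itself offers no proof, only the pointer to \cite[Lemma 1.2.6]{frobsplit}): the Frobenius acts as the $p$-th power map on sections, so it sends the transition cocycle $(g_{ij})$ of $\Lc$ to $(g_{ij}^p)$, which represents $\Lc^{\otimes p}$. Your bookkeeping with the relative Frobenius and the twist $X^{(p)}$ is careful but not needed here, since the intended map is the absolute Frobenius $F\colon X\ra X$ (the identity on the underlying space), for which the computation takes place on a single copy of $X$ with no identification required.
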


\begin{lem}\label{fpbindec}
Let $U$ be the punctured spectrum of a two-dimensional ring $R$ of type $D$ or $E$ and let $\TF:U\ra U$ be the (restricted) Frobenius morphism. Assume that the 
order of the group corresponding to the singularity is invertible in the ground field. Then $\fpb{e}(\Syz_U(\mm))$ is indecomposable for all $e\in\N$.
\end{lem}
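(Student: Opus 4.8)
The plan is to exploit the ring inclusion $\iota:R\ra A_n$ (for suitable $A_n$) coming from the normal subgroup structure displayed in Table \ref{diaade}, and to use the fact that on $\Ac_n$ every vector bundle arising from a maximal Cohen-Macaulay module splits as a direct sum of line bundles (since the $\Mcc_m$ on $A_n$ are rank-one). More precisely, I would argue by contradiction: suppose $\fpb{e}(\Syz_U(\mm))$ decomposes nontrivially on $U=\Rc$. The sheaf $\Syz_U(\mm)$ is the sheafification of $M_1=\Syz_R(X,Y,Z)$ (the module $\Mcc_1$), and by Proposition \ref{syzprop}(6) its Frobenius pull-backs are again syzygy bundles of the form $\Syz_U(X^q,Y^q,Z^q)(\ast)$, hence correspond to rank-two maximal Cohen-Macaulay modules. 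A nontrivial decomposition would therefore be a splitting into two line bundles on $\Rc$.

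First I would pull everything back along the map $\psi:\Ac_n\ra\Rc$ induced by $\iota$. The key compatibility is that the Frobenius commutes with $\psi\pb$, i.e.\ $\psi\pb\circ\fpb{e}\cong\fpb{e}\circ\psi\pb$, because $\iota$ is defined over the prime field and $\psi$ is a morphism of schemes in characteristic $p$. Combining this with the explicit pull-back computation $\psi\pb(\Mcc_1)\cong\Oc_{\Ac_n}^{\,2}$ recorded in Tables \ref{tabpbd}--\ref{tabpbe8} (the entry for $\Mcc_1$ is always a sum of \emph{trivial} bundles, possibly up to twist), I obtain
\begin{equation*}
\psi\pb\bigl(\fpb{e}(\Syz_U(\mm))\bigr)\cong\fpb{e}\bigl(\psi\pb(\Syz_U(\mm))\bigr)\cong\fpb{e}\bigl(\Oc_{\Ac_n}(a)\oplus\Oc_{\Ac_n}(b)\bigr)\cong\Oc_{\Ac_n}(p^e a)\oplus\Oc_{\Ac_n}(p^e b)
\end{equation*}
for suitable twists $a,b$, where the last isomorphism uses the preceding lemma $\fpb{}(\Lc)\cong\Lc^{p}$ on the separable curve $\Ac_n$. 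The upshot is that the pull-back to $\Ac_n$ is a sum of two line bundles whose twists I can compute exactly.

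The heart of the argument is then a rigidity statement: if $\fpb{e}(\Syz_U(\mm))$ split on $\Rc$ as $\Lc_1\oplus\Lc_2$ with $\Lc_i$ line bundles, then applying $\psi\pb$ would give a splitting $\psi\pb(\Lc_1)\oplus\psi\pb(\Lc_2)$ of the sum above into line bundles on $\Ac_n$. I would compare this with the known splitting into the two structure-sheaf twists and use degree bookkeeping, via Proposition \ref{syzprop}(4)--(5) together with $\Deg(\psi)=$ the degree of the map, to pin down the possible twists of $\psi\pb(\Lc_i)$. The indecomposability will follow because a genuine splitting on $\Rc$ would force $\Syz_U(\mm)$ itself (the $e=0$ case) to be decomposable after pull-back in a way incompatible with the fact that $\Mcc_1$ is indecomposable of rank two on $\Rc$ yet pulls back to a \emph{balanced} sum $\Oc^2$ on $\Ac_n$; the Frobenius merely scales the twists by $p^e$, so no new splitting can appear at higher $e$ that was not already visible, and one checks that the candidate sub-line-bundles on $\Rc$ cannot exist by a Hom-vanishing or degree argument on the curve.

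I expect the main obstacle to be making the descent from $\Ac_n$ back to $\Rc$ rigorous: pulling back \emph{creates} splittings, so the nontrivial direction is showing that a splitting upstairs on $\Ac_n$ is only possible when it already reflects a splitting downstairs, and ruling out spurious decompositions. The clean way to handle this is to observe that $\psi\pb$ is faithful on the relevant endomorphism algebras---$\mathrm{End}_{\Oc_{\Rc}}(\fpb{e}\Syz_U(\mm))$ injects into $\mathrm{End}_{\Oc_{\Ac_n}}$ of the pull-back because $\psi$ is a finite surjective (in fact quotient) morphism of degree prime to $p$, so the trace/averaging map over the group $G/H$ splits the inclusion. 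Indecomposability is then equivalent to the endomorphism ring being local, and the faithful, group-equivariant embedding into $\mathrm{End}_{\Oc_{\Ac_n}}(\Oc(p^e a)\oplus\Oc(p^e b))\cong\Mat_2(k)$ (when $a=b$) forces the downstairs endomorphism ring to be the $G/H$-invariants, which one computes to be local exactly when $|G|$ is invertible. This equivariant-descent step, rather than the degree computations, is where the real content lies.
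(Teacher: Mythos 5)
Your overall framework---pull back along the quotient morphisms of Table \ref{diaade}, use that Frobenius commutes with these pull-backs, and exploit that everything over $\Ac_n$ decomposes into line bundles---is exactly the paper's, but the decisive computational input is inverted, and this breaks the argument. According to Tables \ref{tabpbd}--\ref{tabpbe8}, the pull-back of $\Syz_U(\mm)$ to $\Ac_1$ is $\Mcc_1^2$, two copies of the \emph{non-trivial} $2$-torsion element of $\mathrm{Pic}(\Ac_1)\cong\Z/2$, not ``a sum of trivial bundles, possibly up to twist'' $\Oc_{\Ac_1}(a)\oplus\Oc_{\Ac_1}(b)$. The paper's contradiction rests precisely on this non-triviality: a splitting of the rank-two bundle $\fpb{e}(\Syz_U(\mm))$ would be a sum of two line bundles on $U$, i.e.\ of the trivial bundle and the rank-one $\Mcc_i$, and (in type $E$, and in type $D_{n+2}$ with $n$ even) all of these pull back to the \emph{trivial} bundle on $\Ac_1$; on the other hand $\fpb{e}(\Mcc_1^2)\cong(\Mcc_1^{p^e})^2\cong\Mcc_1^2$ stays non-trivial because $p$ is odd. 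Your proposed ``degree bookkeeping'' cannot replace this: on the punctured spectrum of a two-dimensional local ring there is no degree function, the two hypothetical summands would in any case have to be numerically balanced, and the only invariant separating the two sides is the torsion class in the local Picard group---exactly what your $\Oc(a)\oplus\Oc(b)$ formulation discards. Your fallback via endomorphism rings is likewise not an argument yet: $\mathrm{End}_{\Oc_{\Ac_n}}$ of the pull-back is $\Mat_2(A_n)$, not $\Mat_2(k)$ (global sections of $\Oc$ on the punctured spectrum of a normal two-dimensional ring recover the whole ring), and the claim that the $G/H$-invariants of this ring are local ``exactly when $|G|$ is invertible'' is the statement to be proved, not a computation.

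You also miss the one case where the target $\Ac_1$ does not suffice. For $R$ of type $D_{n+2}$ with $n$ odd, the rank-one modules $\Mcc_{n+1}$ and $\Mcc_{n+2}$ do \emph{not} pull back trivially to $\Ac_1$---both go to $\Mcc_1$---so a hypothetical splitting $\fpb{e}(\Syz_U(\mm))\cong\Mcc_{n+1}\oplus\Mcc_{n+2}$ would be invisible there. The paper handles this by pulling back to $\Ac_{n-1}$ instead, where every line bundle from $\Dc_{n+2}$ becomes trivial while $\Syz_U(\mm)$ pulls back to $\Mcc_1\oplus\Mcc_{n-1}$, whose Frobenius pull-backs remain non-trivial because $n$ is invertible modulo $p$. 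Your ``for suitable $A_n$'' does not engage with which target works for which type, and for type $D$ this choice is where the case distinction lives.
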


\begin{proof}
Recall from Section \ref{secmatfac} that $\Syz_U(\mm)$ is isomorphic to $\Mcc_2$ if $R$ is of type $D$ and $\Mcc_1$ if $R$ is of type $E$. Assume first that $R$ is of type 
$D_{2n+2}$ or $E$. If $\fpb{e}(\Syz_U(\mm))$ splits for some $e\geq 1$, its pull-back to $\Ac_1$ (along the morphism $\psi$, induced by the inclusion into $A_1$) gets trivial, since all line bundles pull back to the structure 
sheaf by Tables \ref{tabpbd}-\ref{tabpbe8}. But the pull-back of $\Syz_U(\mm)$ along $\psi$ is non-trivial (in fact, it is always $\Mcc_1^2$ by the direct computations) and its Frobenius pull-back stays non-trivial in odd 
characteristics. This gives a contradiction, since the two types of pull-backs commute.

Assume that $R$ is of type $D_{n+2}$ with $n$ odd. Since $\fpb{e}(\Syz_U(\mm))$ might split as $\Mcc_{n+1}\oplus\Mcc_{n+2}$ and 
both pull back to $\Mcc_1$ on $\Ac_1$, the argument above gives no contradiction. In this case, we have to compare the pull-backs to $\Ac_{n-1}$. Then 
all line bundles from $\Dc_{n+2}$ pull back to the structure sheaf of $\Ac_{n-1}$, but $\psi\pb(\Mcc_2)\cong\Mcc_1\oplus\Mcc_{n-1}$. The Frobenius pull-backs 
on the right hand side do not get trivial, since $n$ is invertible modulo $p$.

Moreover, if $R$ is of type $D$, the Frobenius pull-backs of $\Mcc_2$ are again of the form $\Mcc_{2m}$, since the $\Mcc_{2m-1}$ get trivial after pulling them 
back to $\Ac_1$.
\end{proof}

By the previous lemma all Frobenius pull-backs of $\Syz_U(\mm)$ are again indecomposable of rank two. Since in all cases 
$\Det(\fpb{e}(\Syz_U(\mm)))\cong\Oc_U$, we obtain directly $\fpb{e}(\Syz_U(\mm))\cong\Syz_U(\mm)$ if $R$ is of type $E_6$, since in this case there is 
only one isomorphism class of indecomposable rank two modules, whose representation is a syzygy module of an $\mm$-primary ideal. 
In the $E_7$ case $\fpb{e}(\Syz_U(\mm))$ has to be isomorphic to either $\Mcc_1$ or $\Mcc_6$ and in the $E_8$ case $\fpb{e}(\Syz_U(\mm))$ has to be isomorphic 
to either $\Mcc_1$ or $\Mcc_8$. In both cases this depends on the characteristic as we will see in the next section.

\section{Detecting isomorphism classes by the Hilbert-series}
Now we need an invariant of the isomorphism classes of the indecomposable, maximal Cohen-Macaulay $R$-modules. It turns out that the Hilbert-series does 
quite a good job in this case.

For example if $R$ is of type $E_8$, there are two isomorphism classes of indecomposable, maximal Cohen-Macaulay $R$-modules of rank two, namely 
$$M_1 = \Syz_R(X,Y,Z) \qquad \text{and} \qquad M_8 = \Syz_R(X,Y,Z^2).$$

Their Hilbert-series are given by
$$\lK_{M_1}(t) = \frac{t^{16}+t^{21}+t^{25}+t^{30}}{(1-t^{10})(1-t^6)} \qquad\text{and}\qquad \lK_{M_8}(t) = \frac{t^{22}+t^{25}+t^{27}+t^{30}}{(1-t^{10})(1-t^6)}.$$

Since by Lemma \ref{fpbindec} for all primes $p\geq 7$, the modules $M^e:=\Syz_R\left(X^{p^e},Y^{p^e},Z^{p^e}\right)$ are indecomposable, maximal 
Cohen-Macaulay modules, their Hilbert-series have to be, up to a factor $t^l$, the Hilbert-series of either $M_1$ or $M_8$. To compute the Hilbert-series 
of $M^e$ in dependence on the two parameters $p$ and $e$, we will make use of the following statements.

\begin{lem}\label{ses}
Let $k$ be a field and $$R:=k[X,Y,Z]/(X^d-F(Y,Z))$$ with $\Deg(X)=\alpha$, $\Deg(Y)=\beta$, $\Deg(Z)=\gamma$ and $F\in k[Y,Z]$ homogeneous of degree 
$d\alpha$ ($d$, $\alpha$, $\beta$, $\gamma\in\N_{\geq 1}$). Let $a\in\N_{\geq 1}$ and write $a=d\cdot q +r$ with $0\leq r\leq d-1$ and $q\in\N$.
For all $s\in\Z$ we have a short exact sequence
\begin{align*}
0 & \lra \Syz_R(X^{a+d-2r},Y^b,Z^c)(s-\alpha\cdot r)\\
 & \stackrel{\psi}{\lra} \Syz_R(X^{a-r},Y^b,Z^c)(s-\alpha\cdot r)\dirsum\Syz_R(X^{a+d-r},Y^b,Z^c)(s)\\
 & \stackrel{\phi}{\lra} \Syz_R(X^a,Y^b,Z^c)(s)\lra 0,
\end{align*}
where the maps are defined via 
\begin{align*}
\psi(h_1,h_2,h_3):= & ((X^{d-r}\cdot h_1,h_2,h_3),(-h_1,-X^r\cdot h_2,-X^r\cdot h_3))\\
\phi((f_1,f_2,f_3),(g_1,g_2,g_3)):= &(f_1+X^{d-r}\cdot g_1,X^r\cdot f_2+g_2,X^r\cdot f_3+g_3). 
\end{align*}
\end{lem}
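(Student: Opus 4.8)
The plan is to verify directly that $\psi$ and $\phi$ are well-defined homogeneous $R$-module homomorphisms and then to check exactness at each spot, the only non-formal point being the surjectivity of $\phi$. For well-definedness one checks that $\psi(h_1,h_2,h_3)$ satisfies the two defining syzygy relations: the relation $X^{a+d-2r}h_1+h_2Y^b+h_3Z^c=0$ yields both $X^{d-r}h_1\cdot X^{a-r}+h_2Y^b+h_3Z^c=0$ and its $X^r$-multiple, so the two components land in the correct syzygy modules; similarly $\phi((f_1,f_2,f_3),(g_1,g_2,g_3))$ lies in $\Syz_R(X^a,Y^b,Z^c)$ since the image paired against $X^a,Y^b,Z^c$ equals $X^r\cdot(\text{first relation})+(\text{second relation})=0$. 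A routine degree count confirms that with the stated twists both maps are homogeneous of degree zero. Injectivity of $\psi$ is immediate, because the second component of $\psi(h)$ already forces $h_1=0$ while the first forces $h_2=h_3=0$, and $\phi\circ\psi=0$ follows by a one-line cancellation.

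For exactness in the middle I would take $(f,g)\in\Ker\phi$ with $f=(f_1,f_2,f_3)$ and $g=(g_1,g_2,g_3)$; the three scalar equations $\phi(f,g)=0$ read $f_1=-X^{d-r}g_1$, $g_2=-X^rf_2$, $g_3=-X^rf_3$. I then set $h:=(-g_1,f_2,f_3)$ and check by substitution that $\psi(h)=(f,g)$. The only point to verify is that $h\in\Syz_R(X^{a+d-2r},Y^b,Z^c)$, and this is precisely the relation $f\in\Syz_R(X^{a-r},Y^b,Z^c)$ rewritten through $f_1=-X^{d-r}g_1$ together with $X^{d-r}\cdot X^{a-r}=X^{a+d-2r}$. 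This yields $\Ker\phi\subseteq\Ima\psi$, the reverse inclusion being $\phi\circ\psi=0$.

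The heart of the proof, and the step I expect to be the main obstacle, is the surjectivity of $\phi$. Given a homogeneous $(w_1,w_2,w_3)\in\Syz_R(X^a,Y^b,Z^c)$, I would use that $R$ is free over $P:=k[Y,Z]$ on $1,X,\dots,X^{d-1}$ (as $X^d-F$ is monic in $X$) to write $w_1=\sum_{j=0}^{d-1}w_{1,j}X^j$ with homogeneous $w_{1,j}\in P$, and split it as $w_1=f_1+X^{d-r}g_1$ with $f_1:=\sum_{j=0}^{d-r-1}w_{1,j}X^j$ and $g_1:=\sum_{i=0}^{r-1}w_{1,i+d-r}X^i$. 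The one genuine claim is that this $f_1$ satisfies $f_1X^{a-r}\in(Y^b,Z^c)$. To prove it I would reduce modulo $(Y^b,Z^c)$, working in $\bar R:=R/(Y^b,Z^c)$, which is free over $P':=k[Y,Z]/(Y^b,Z^c)$ on $1,\dots,X^{d-1}$. Since $a-r=dq$ we have $X^{a-r}=F^q$ and $X^a=F^qX^r$ in $R$, so the syzygy relation $w_1X^a\in(Y^b,Z^c)$ becomes $\bar F^{\,q}X^r\bar w_1=0$ in $\bar R$. Expanding $\bar F^{\,q}X^r\bar w_1=\sum_j\bar F^{\,q}c_jX^{j+r}$ (with $c_j$ the image of $w_{1,j}$) and reducing the powers $X^{j+r}$ via $X^d=\bar F$, the terms with $0\le j\le d-1-r$ contribute to the basis vectors $X^r,\dots,X^{d-1}$ with coefficients $\bar F^{\,q}c_j$, while the terms with $d-r\le j\le d-1$ contribute to $X^0,\dots,X^{r-1}$ with coefficients $\bar F^{\,q+1}c_j$; as these occupy disjoint basis vectors, $P'$-linear independence forces $\bar F^{\,q}c_j=0$ for all $0\le j\le d-1-r$. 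These are exactly the coefficients appearing in $f_1$, whence $\bar F^{\,q}\bar f_1=0$, i.e. $f_1X^{a-r}\in(Y^b,Z^c)$.

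With this in hand I would finish by choosing homogeneous $f_2,f_3\in R$ with $f_2Y^b+f_3Z^c=-f_1X^{a-r}$ (possible since the right-hand side is a homogeneous element of the homogeneous ideal $(Y^b,Z^c)$), setting $f:=(f_1,f_2,f_3)$, $g_2:=w_2-X^rf_2$, $g_3:=w_3-X^rf_3$, $g:=(g_1,g_2,g_3)$. By construction $f\in\Syz_R(X^{a-r},Y^b,Z^c)$ and $\phi(f,g)=(w_1,w_2,w_3)$, and the remaining relation $g\in\Syz_R(X^{a+d-r},Y^b,Z^c)$ follows from $g_2Y^b+g_3Z^c=(w_2Y^b+w_3Z^c)-X^r(f_2Y^b+f_3Z^c)=-w_1X^a+f_1X^a=-X^{d-r}g_1X^a=-g_1X^{a+d-r}$, using the original syzygy relation and $w_1-f_1=X^{d-r}g_1$. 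This proves $\phi$ surjective and completes the exactness. I expect the first two steps to be entirely mechanical; the only real content is the divisibility claim $f_1X^{a-r}\in(Y^b,Z^c)$, which is precisely where the hypothesis $a-r=dq$ (so that $X^{a-r}=F^q$) and the freeness of $R$ over $k[Y,Z]$ are essential.
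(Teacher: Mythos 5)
Your proof is correct and follows essentially the same route as the paper, which declares injectivity and middle exactness straightforward and refers the surjectivity of $\phi$ to Miyaoka's Lemma 1.1; your decomposition of $w_1$ along the $k[Y,Z]$-basis $1,X,\dots,X^{d-1}$ and the reduction modulo $(Y^b,Z^c)$ is exactly the Miyaoka-type argument the paper invokes. Nothing to add.
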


\begin{proof}
The injectivity of $\psi$ is clear and the exactness at the middle spot is straightforward. The proof that $\phi$ is surjective is essentially the 
same as in \cite[Lemma 1.1]{miyaoka}. See also \cite[Chapter 4]{drdaniel} for a detailed proof and a slight generalization.
\end{proof}

\begin{rem}
Note that the two first syzygy modules $\Syz_R(X^{a-r},Y^b,Z^c)$ and $\Syz_R(X^{a+d-r},Y^b,Z^c)$ appearing in the middle spot are already defined over $k[Y,Z]$, 
hence they split by Hilberts Syzygy Theorem as a direct sum of two (degree shifted) copies of $R$.
\end{rem}

\begin{thm}\label{HilbSer} The notations are the same as in Lemma \ref{ses}. For $l\in\N$ we use the abbreviation $\Sc(l):=\Syz_R(X^l,Y^b,Z^c)$. 
Then the Hilbert-series of $\Sc(a)=\Syz_R(X^a,Y^b,Z^c)$ is given by
\begin{equation*}
\lK_{\Sc(a)}(t) = \frac{(t^{\alpha\cdot r}-t^{\alpha\cdot d})\cdot \lK_{\Sc(a-r)}(t)+(1-t^{\alpha\cdot r})\lK_{\Sc(a+d-r)}(t)}{1-t^{\alpha\cdot d}}.
\end{equation*}
\end{thm}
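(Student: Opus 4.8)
The plan is to obtain the recursion directly from the short exact sequence of Lemma \ref{ses} combined with the additivity of the Hilbert-series on exact sequences. Throughout I use that a degree shift acts by $\lK_{M(n)}(t)=t^{-n}\lK_M(t)$, so in particular $\lK_{M(-\alpha r)}(t)=t^{\alpha r}\lK_M(t)$, and that for a short exact sequence $0\to L\to M\to N\to 0$ of finitely generated graded modules the alternating sum of Hilbert-series vanishes, i.e. $\lK_N=\lK_M-\lK_L$. Since the maps $\psi,\phi$ in Lemma \ref{ses} are homogeneous of degree $0$ with the indicated shifts, this additivity applies verbatim.

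First I would instantiate Lemma \ref{ses} with $s=0$ for the given $a=dq+r$. Taking Hilbert-series of
\[
0\lra\Sc(a+d-2r)(-\alpha r)\lra\Sc(a-r)(-\alpha r)\oplus\Sc(a+d-r)\lra\Sc(a)\lra 0
\]
yields
\begin{equation*}
\lK_{\Sc(a)}(t)=t^{\alpha r}\lK_{\Sc(a-r)}(t)+\lK_{\Sc(a+d-r)}(t)-t^{\alpha r}\lK_{\Sc(a+d-2r)}(t).\tag{$\ast$}
\end{equation*}
This already isolates $\lK_{\Sc(a)}$, but it contains the unwanted term $\lK_{\Sc(a+d-2r)}$, which does not occur on the right-hand side of the theorem and must therefore be eliminated.

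The key step is to apply Lemma \ref{ses} a second time, now to the exponent $a'':=a+d-2r$. Writing $a''=d(q+1)-r=dq+(d-r)$ shows that for $1\le r\le d-1$ its residue modulo $d$ is $r'':=d-r\in\{1,\dots,d-1\}$, and $a''\ge dq+1\ge 1$, so the lemma applies. The decisive observation is that the construction loops back onto itself: one computes $a''-r''=a-r$, $a''+d-r''=a+d-r$ and, crucially, $a''+d-2r''=a$. Hence the second exact sequence reads
\[
0\lra\Sc(a)(-\alpha(d-r))\lra\Sc(a-r)(-\alpha(d-r))\oplus\Sc(a+d-r)\lra\Sc(a+d-2r)\lra 0,
\]
and taking Hilbert-series gives
\begin{equation*}
\lK_{\Sc(a+d-2r)}(t)=t^{\alpha(d-r)}\lK_{\Sc(a-r)}(t)+\lK_{\Sc(a+d-r)}(t)-t^{\alpha(d-r)}\lK_{\Sc(a)}(t).\tag{$\ast\ast$}
\end{equation*}

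Finally I would substitute $(\ast\ast)$ into $(\ast)$. Using $t^{\alpha r}\cdot t^{\alpha(d-r)}=t^{\alpha d}$ and collecting the two occurrences of $\lK_{\Sc(a)}$ on the left produces
\[
(1-t^{\alpha d})\,\lK_{\Sc(a)}(t)=(t^{\alpha r}-t^{\alpha d})\,\lK_{\Sc(a-r)}(t)+(1-t^{\alpha r})\,\lK_{\Sc(a+d-r)}(t),
\]
which is the claimed formula after dividing by $1-t^{\alpha d}$. The case $r=0$ is a tautology, since then both sides collapse to $\lK_{\Sc(a)}(t)$, so it needs no separate treatment. The only genuinely non-routine point is the second application of the lemma together with the residue bookkeeping that makes it close up on $\Sc(a)$; once this symmetry is spotted, everything else is elementary manipulation of formal power series, with the homogeneity of the maps in Lemma \ref{ses} underwriting the additivity of the Hilbert-series at each step.
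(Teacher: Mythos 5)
Your proof is correct and follows exactly the paper's argument: apply Lemma \ref{ses} once to $a$ (with remainder $r$) and once to $a'=a+d-2r$ (with remainder $d-r$), observe that the second sequence closes back onto $\Sc(a)$, then substitute and solve. The paper's proof is a condensed version of the same two-equation elimination, so there is nothing to add.
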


\begin{proof}
 Let $a':=a+d-2r=dq+d-r$ and $r':=d-r$. Since $a'+d-2r'=a$, Theorem \ref{ses} yields
\begin{align*}
 \lK_{\Sc(a)}(t) & = t^{\alpha\cdot r}\lK_{\Sc(a-r)}(t)+\lK_{\Sc(a+d-r)}(t)-t^{\alpha\cdot r}\lK_{\Sc(a')}(t)\quad\text{and}\\
 \lK_{\Sc(a')}(t) & = t^{\alpha\cdot r'}\lK_{\Sc(a-r)}(t)+\lK_{\Sc(a+d-r)}(t)-t^{\alpha\cdot r'}\lK_{\Sc(a)}(t).
\end{align*}
Substituting $\lK_{\Sc(a')}(t)$ in the first formula and solving for $\lK_{\Sc(a)}(t)$ gives the result.
\end{proof}

Turning back to the question of computing the Hilbert-series of 
$$M^e=\Syz_R\left(X^{p^e},Y^{p^e},Z^{p^e}\right)$$
if $R$ is type $E_8$, we obtain via Theorem \ref{HilbSer}
$$\lK_{M^e}(t) = \frac{(t^{15}-t^{30})\cdot \lK_{\Syz_R\left(X^{p^e-1},Y^{p^e},Z^{p^e}\right)}(t)+(1-t^{15})\lK_{\Syz_R\left(X^{p^e+1},Y^{p^e},Z^{p^e}\right)}(t)}{1-t^{30}}.$$

This shows that it is enough to compute the Hilbert-series of the $k[Y,Z]$-modules
\begin{align*}
\Syz_{k[Y,Z]}\left(X^{p^e-1},Y^{p^e},Z^{p^e}\right)\cong & \Syz_{k[Y,Z]}\left((-Y^3-Z^5)^{\frac{p^e-1}{2}},Y^{p^e},Z^{p^e}\right),\\
\Syz_{k[Y,Z]}\left(X^{p^e+1},Y^{p^e},Z^{p^e}\right)\cong & \Syz_{k[Y,Z]}\left((-Y^3-Z^5)^{\frac{p^e+1}{2}},Y^{p^e},Z^{p^e}\right).
\end{align*}

By Hilbert's Syzygy Theorem, modules of the form 
\begin{equation}\label{syzsplit}
\Syz_{k[Y,Z]}\left((Y^3+Z^5)^{\frac{p^e\mp 1}{2}},Y^{p^e},Z^{p^e}\right)
\end{equation}
split as $k[Y,Z](m)\oplus k[Y,Z](n)$ for suitable $m$, $n\in\Z$. 

The question now is how to compute the degree shifts $m$ and $n$. If the quotients were of the form $k[Y,Z]/((Y+Z)^{d_1},Y^{d_2},Z^{d_3})$, we could make use of 
Han's $\delta$-function, which computes the difference of the degree shifts in this situation. It has a unique continuous extension to $[0,\infty)^3$ 
satisfying $\delta(pt)=p\cdot\delta(t)$ for $\chara(k)=p>0$, $t\in[0,\infty)^3$ and can be computed with Han's Theorem \ref{hansthm}. 
This is useful for us since one can reduce the non-standard-graded situation to a standard-graded case as we will see in a moment.

\begin{defi}
Let $$L_{\text{odd}}:=\left\{u\in\N^3|u_1+u_2+u_3\text{ is odd}\right\}$$ be the odd lattice.
\end{defi}

\begin{thm}[Han]\label{hansthm}
Let $t=(t_1,t_2,t_3)\in[0,\infty)^3.$ If the $t_i$ do not satisfy the strict triangle inequality (w.l.o.g. $t_1\geq t_2+t_3$), we have $\delta(t)=t_1-t_2-t_3.$
If the $t_i$ satisfy the strict triangle inequality and there are $s\in \Z$ and $u\in L_{odd}$ with $\left\Vert p^st-u\right\Vert_1<1$, then there is such a pair $(s,u)$ with minimal $s$ and with this pair $(s,u)$ we get
$$\delta(t)=\frac{1}{p^s}\cdot\left(1-\left\Vert p^st-u\right\Vert_1\right).$$ Otherwise, we have $\delta(t)=0$.
\end{thm}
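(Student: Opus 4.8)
The plan is to read $\delta(t)$, for integer triples $t\in\N^3$, as the \emph{syzygy gap} of the ideal $(\ell_1^{t_1},\ell_2^{t_2},\ell_3^{t_3})$ generated by three pairwise coprime linear forms $\ell_1,\ell_2,\ell_3\in k[x,y]$ (three distinct points of $\Proj(k[x,y])$, the configuration being unique up to $\mathrm{PGL}_2$): since the module $\Syz_{k[x,y]}(\ell_1^{t_1},\ell_2^{t_2},\ell_3^{t_3})$ is free of rank two, say $k[x,y](-a_1)\oplus k[x,y](-a_2)$, one sets $\delta(t):=|a_1-a_2|$, and Proposition \ref{syzprop}(4) gives $a_1+a_2=t_1+t_2+t_3$. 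First I would record the elementary structure on $\N^3$. The boundary case is immediate: if $t_1\geq t_2+t_3$ then $\ell_1^{t_1}\in(\ell_2^{t_2},\ell_3^{t_3})$, so the two generators of the syzygy module are the Koszul relation of $\ell_2^{t_2},\ell_3^{t_3}$ (degree $t_2+t_3$) and the relation rewriting $\ell_1^{t_1}$ (degree $t_1$), whence $\delta(t)=t_1-t_2-t_3$; and the parity $\delta(t)\equiv t_1+t_2+t_3\pmod 2$ is read off from $a_1+a_2=t_1+t_2+t_3$.

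Next I would establish the two properties that drive the whole argument: Frobenius scaling and a Lipschitz estimate. Scaling $\delta(pt)=p\,\delta(t)$ follows from $\ell_i^{pt_i}=(\ell_i^{t_i})^p$ together with the fact (Proposition \ref{syzprop}(6)) that passing to $p$-th powers is the Frobenius pull-back on $\Proj(k[x,y])$, which sends $\Oc(-a_1)\oplus\Oc(-a_2)$ to $\Oc(-pa_1)\oplus\Oc(-pa_2)$ and hence multiplies the gap by $p$. For continuity I would use a short exact sequence of the type of Lemma \ref{ses}, obtained by bumping a single exponent by one, to show $|\delta(t+e_i)-\delta(t)|\leq 1$, so that this difference equals $1$ by parity and $\delta$ is $1$-Lipschitz for $\left\Vert\cdot\right\Vert_1$. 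Combined with the scaling relation, this produces a unique continuous, positively $p$-homogeneous extension of $\delta$ to $[0,\infty)^3$, so it suffices to pin this extension down.

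With these in hand, the formula is obtained by localizing at the odd lattice. The key local statement is a tent formula: if $u\in L_{\text{odd}}$ satisfies the strict triangle inequality and $\left\Vert t-u\right\Vert_1<1$, then $\delta(t)=1-\left\Vert t-u\right\Vert_1$, and in particular $\delta(u)=1$. The lower bound $\delta(t)\geq 1-\left\Vert t-u\right\Vert_1$ is cheap: it is forced by $\delta(u)\geq 1$ (parity plus positivity) and the $1$-Lipschitz property. Granting the tent formula, the general statement follows by scaling: for $t$ in the triangle region such that some $p^st$ lands within $\ell^1$-distance $<1$ of an odd lattice point, I take the minimal such $s$ with its point $u$, apply the tent formula to get $\delta(p^st)=1-\left\Vert p^st-u\right\Vert_1$, and recover $\delta(t)=p^{-s}\delta(p^st)$ as claimed. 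If no $p^st$ ever enters such a ball, the same homogeneity forces $\delta(t)$ below $p^{-s}$ for every $s$, hence $\delta(t)=0$; geometrically this is the strongly semistable case, where every Frobenius pull-back of the syzygy bundle stays balanced.

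The hard part will be the sharp upper bound $\delta(t)\leq 1-\left\Vert t-u\right\Vert_1$ inside a unit ball about an odd lattice point, because the $1$-Lipschitz property alone yields only the matching lower bound together with the far-too-weak upper bound $1+\left\Vert t-u\right\Vert_1$. Here genuine control over how the splitting type of the syzygy bundle jumps is unavoidable: I would exhibit, near $u$, two independent syzygies whose degrees differ by exactly $1-\left\Vert t-u\right\Vert_1$ (equivalently, invoke Monsky's syzygy-gap inequality), using the recursion coming from the short exact sequences of Lemma \ref{ses} to track the gap as the exponents move off $u$ and to exclude any larger gap. Checking that this recursion telescopes to the clean tent function, and that distinct odd lattice points (which are $\ell^1$-separated by at least $2$) cannot interfere inside a single unit ball, is the main technical obstacle; the remainder is bookkeeping with the scaling relation.
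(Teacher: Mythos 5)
First, note that the paper itself does not prove this theorem: it is stated as Han's theorem and the proof is explicitly delegated to the cited references (\cite{handiss}, \cite{mason}), so there is no in-paper argument to compare yours against. Judged on its own terms, your outline reproduces the standard architecture of Han's proof correctly: the reading of $\delta$ as the syzygy gap of $(\ell_1^{t_1},\ell_2^{t_2},\ell_3^{t_3})$, the boundary case $t_1\ge t_2+t_3$, the parity constraint $\delta(t)\equiv t_1+t_2+t_3 \pmod 2$ coming from $a_1+a_2=t_1+t_2+t_3$, the Frobenius scaling $\delta(pt)=p\,\delta(t)$, the $1$-Lipschitz estimate from bumping a single exponent, the resulting unique continuous $p$-homogeneous extension, and the reduction of the whole statement to a ``tent formula'' around odd lattice points are all correct and are exactly the ingredients Han uses.

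There is, however, a genuine gap precisely where you flag it, and it is not a minor one: the sharp upper bound $\delta(u)\le 1$ for $u\in L_{\text{odd}}$ in the strict triangle region (equivalently $\delta(t)\le 1-\left\Vert t-u\right\Vert_1$ on the unit $\ell^1$-ball, and more generally $\delta\le 1$ throughout the triangle region, which your $\delta(t)=0$ case also silently uses when you bound $\delta(t)=p^{-s}\delta(p^st)\le p^{-s}$). Parity and the Lipschitz property only give $\delta(u)\ge 1$ and cannot exclude $\delta(u)=3$, say; this upper bound is the actual content of the theorem and occupies the bulk of Han's thesis, requiring a genuinely characteristic-$p$ induction that interleaves the exact-sequence recursion with the Frobenius scaling. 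Deferring it to ``exhibit two independent syzygies of the right degrees'' or to ``Monsky's syzygy-gap inequality'' is, respectively, a restatement of what must be proved and an appeal to a result that is itself established by the same kind of analysis. So your proposal is a correct road map with the central lemma asserted rather than proved; since the paper simply cites the literature for this theorem, that is arguably the same amount of proof the paper offers, but it is not a self-contained argument.
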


The proof and more properties can be found in \cite{handiss} as well as in \cite{mason}.

To make use of this result in a non-standard-graded setting, lets fix some notations.

\begin{setup}
Let $k$ be a field of characteristic $p>0$, let $R:=k[U,V]$ equipped with the positive grading $\Deg(U)=\alpha$, $\Deg(V)=\beta$. Fix $a$, $b\in\N_{\geq 1}$ 
such that $a\cdot \alpha=b\cdot \beta$. For $a_1$, $a_2$, $a_3\in\N_{\geq 1}$ the $R$-module 
$$\Syz_R((U^a+V^b)^{a_1},U^{a_2},V^{a_3})$$
splits by Hilbert's Syzygy Theorem as $R(m)\oplus R(n)$ for some $m$, $n\in\Z$. Let
$$\tau(a_1,a_2,a_3):=|m-n|.$$
This function $\tau$ has a unique continuous extension to $[0,\infty)^3$, again with the property $\tau(pt)=p\cdot\tau(t)$. 
See \cite[Chapter 2]{drdaniel} for the details.
\end{setup}

\begin{lem}\label{deltaquasi} With the notations from above and $t\in[0,\infty)^3$ we have 
$$\tau(t_1,t_2,t_3)=a\cdot\alpha\cdot\delta\left(t_1,\frac{t_2}{a},\frac{t_3}{b}\right).$$
\end{lem}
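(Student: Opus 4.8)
The plan is to realize the non-standard graded situation as a flat base change of the standard-graded one along the ring map sending the two variables to $U^a$ and $V^b$, and then to keep careful track of how the degree shifts transform. Throughout write $D:=a\alpha=b\beta$ for the common degree of $U^a$ and $V^b$.

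First I would reduce to a dense set of points. Both $\tau$ and $\delta$ are continuous on $[0,\infty)^3$ and satisfy $f(pt)=p\cdot f(t)$, and one checks that the right-hand side $a\alpha\cdot\delta(t_1,t_2/a,t_3/b)$ obeys the same homogeneity, since $a\alpha\cdot\delta(pt_1,pt_2/a,pt_3/b)=p\cdot a\alpha\cdot\delta(t_1,t_2/a,t_3/b)$. Hence it suffices to prove the identity on integer triples $(a_1,a_2,a_3)$ with $a\mid a_2$ and $b\mid a_3$: applying $t\mapsto t/p^s$ and using homogeneity spreads the identity over the $p$-adically fine grid $\{(a_1,a_2,a_3)/p^s\}$, which is dense in $[0,\infty)^3$, and continuity of both sides extends it everywhere. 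So fix $a_2=a\cdot c_2$ and $a_3=b\cdot c_3$ with $c_2,c_3\in\N$.

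Next I set up the base change. Let $P:=k[Y,Z]$, graded by $\Deg Y=\Deg Z=D$, and consider the finite, free (hence flat) graded ring map
$$\bar\phi:P\lra R=k[U,V],\quad Y\mapsto U^a,\ Z\mapsto V^b,$$
which is degree-preserving and for which the monomials $U^iV^j$ with $0\le i<a$, $0\le j<b$ form a $P$-basis of $R$. Under $\bar\phi$ the generators transform as $(Y+Z)^{a_1}\mapsto(U^a+V^b)^{a_1}$, $Y^{c_2}\mapsto U^{a_2}$ and $Z^{c_3}\mapsto V^{a_3}$, which are exactly the elements defining $\tau(a_1,a_2,a_3)$. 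Since $\bar\phi$ is flat, base change commutes with taking the kernel of $P^3\to P$, so
$$\Syz_R\big((U^a+V^b)^{a_1},U^{a_2},V^{a_3}\big)\cong\Syz_P\big((Y+Z)^{a_1},Y^{c_2},Z^{c_3}\big)\otimes_P R$$
as graded $R$-modules. (By Hilbert's Syzygy Theorem the factor $\Syz_P$ of three elements in the two-variable ring is free of rank two, so both sides really are two-term splittings.) Now I track the shifts: in the standard grading $\Deg Y=\Deg Z=1$ one has $\Syz_P((Y+Z)^{a_1},Y^{c_2},Z^{c_3})\cong P(m')\oplus P(n')$ with $\delta(a_1,c_2,c_3)=|m'-n'|$. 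Rescaling the grading of $P$ to $\Deg Y=\Deg Z=D$ multiplies every internal degree by $D$, turning a generator of $P(m')$ sitting in degree $-m'$ into one sitting in degree $-Dm'$, so the same module becomes $P(Dm')\oplus P(Dn')$. As $\bar\phi$ is degree-preserving with $R$ graded-free over $P$, we have $P(c)\otimes_P R\cong R(c)$, whence the displayed isomorphism reads $R(Dm')\oplus R(Dn')$, and therefore
$$\tau(a_1,a_2,a_3)=|Dm'-Dn'|=D\cdot|m'-n'|=a\alpha\cdot\delta(a_1,c_2,c_3)=a\alpha\cdot\delta\Big(a_1,\tfrac{a_2}{a},\tfrac{a_3}{b}\Big).$$

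The flatness of $\bar\phi$, the base-change formula, and the density argument are routine. The step requiring the most care is the grading bookkeeping: one must verify precisely that rescaling the grading of $P$ by the factor $D=a\alpha$ sends the standard-graded shift $P(m')$ to $P(Dm')$, so that the invariant measured by $\delta$ is magnified by exactly $a\alpha$ and not by some other multiple. Getting this factor right is the whole point of the lemma, and it is the only place where the hypothesis $a\alpha=b\beta$ (ensuring $\bar\phi$ is degree-preserving) is genuinely used.
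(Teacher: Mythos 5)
Your proof is correct and follows essentially the same route as the paper: reduce by continuity, $p$-homogeneity and density to triples $(a_1,ac_2,bc_3)$, and then transport the syzygy module along the map $Y\mapsto U^a$, $Z\mapsto V^b$, which multiplies all twists by $a\alpha$. The only difference is cosmetic: the paper phrases the comparison as a pull-back of syzygy sheaves along the induced morphism of projective spectra, while you phrase it as a flat (graded-free) base change of modules with the same bookkeeping of degree shifts.
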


\begin{proof}
For $t\in\N^3$ both sides multiply by $p$ when $t$ is replaced by $pt$. Since rationals of the form $\tfrac{na}{p^j}$ resp. $\tfrac{nb}{p^j}$ are dense in $[0,\infty)$, it suffices 
to prove the statement for $t=(t_1,at_2,bt_3)$ with $t_i\in\N$. Let $S$ be the standard-graded polynomial ring $k[X,Y]$. Consider the map 
$\phi:S\rightarrow R$, $X\mapsto U^a$, $Y\mapsto V^b$, which is homogeneous of degree $a\cdot \alpha$. Denote by $\psi$ the induced map on the projective spectra. 
We obtain 
$$\xymatrix{
\psi\pb(\Syz_{\Proj(S)}((X+Y)^{t_1},X^{t_2},Y^{t_3}))\ar[r]^{\cong}\ar[d]^{\cong} & \Syz_{\Proj(R)}((U^a+V^b)^{t_1},U^{at_2},V^{bt_3})\\
\psi\pb(\Oc_{\Proj(S)}(l)\oplus\Oc_{\Proj(S)}(l+\delta))\ar[r]^{\cong} & \Oc_{\Proj(R)}(a\alpha l)\oplus\Oc_{\Proj(S)}(a\alpha (l+\delta))
}$$
for some $l\in\Z$ and $\delta=\delta(t_1,t_2,t_3)$. Now the claim follows, since we have the isomorphism
$$\Syz_{\Proj(R)}((U^a+V^b)^{t_1},U^{at_2},V^{bt_3})\cong\Oc_{\Proj(R)}(m)\oplus\Oc_{\Proj(R)}(m+\tau)$$
for some $m\in\Z$ and $\tau=\tau(t_1,at_2,bt_3)$.
\end{proof}

This enables to prove the following.

\begin{lem}\label{syzpbe8}
\begin{align*}
 & \Syz_R(X^{p^e},Y^{p^e},Z^{p^e})\\
\cong & \left\{\begin{aligned} & \Syz_R(X,Y,Z)\left(-\frac{31}{2}(p^e-1)\right) && \text{if } p^e\text{ mod }30\in \{\pm 1,\pm 11\},\\ & \Syz_R(X,Y,Z^2)\left(-\frac{31}{2}(p^e-1)+3\right) && \text{if } p^e\text{ mod }30\in \{\pm 7,\pm 13\}.\end{aligned}\right.
\end{align*}
\end{lem}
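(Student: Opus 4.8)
The plan is to use that, by Lemma \ref{fpbindec} and the discussion following it, for $p\geq 7$ the module $M^e:=\Syz_R(X^{p^e},Y^{p^e},Z^{p^e})$ is indecomposable, maximal Cohen-Macaulay of rank two, hence isomorphic to a degree shift of either $M_1=\Syz_R(X,Y,Z)$ or $M_8=\Syz_R(X,Y,Z^2)$. So it suffices to compute the Hilbert series $\lK_{M^e}(t)$ and match it, up to a factor $t^l$, against $\lK_{M_1}(t)$ or $\lK_{M_8}(t)$; since the two target numerators have different ``spreads'', this comparison decides the isomorphism type and the shift $l$ at the same time.

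First I would feed the recursion of Theorem \ref{HilbSer} (with $\alpha=15$, $d=2$, $r=1$) to get
\begin{equation*}
\lK_{M^e}(t)=\frac{(t^{15}-t^{30})\lK_{\Sc(p^e-1)}(t)+(1-t^{15})\lK_{\Sc(p^e+1)}(t)}{1-t^{30}},
\end{equation*}
where $\Sc(p^e\mp1)=\Syz_R(X^{p^e\mp1},Y^{p^e},Z^{p^e})$ has all its generators in $k[Y,Z]$ because $X^{p^e\mp1}=(-Y^3-Z^5)^{(p^e\mp1)/2}$. Over $k[Y,Z]$ such a module is free of rank two, say $k[Y,Z](-A)\oplus k[Y,Z](-B)$, and I would pin down $A+B$ and $|A-B|$ separately. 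For the sum I take determinants on the punctured spectrum $\punctured{k[Y,Z]}{0}$, where the ideal $I=(X^{p^e\mp1},Y^{p^e},Z^{p^e})$ is trivial, so the presenting sequence gives $\Det(\Sc(p^e\mp1))\cong k[Y,Z](-\sum_i d_i)$ with $d_1=15(p^e\mp1)$, $d_2=10p^e$, $d_3=6p^e$, i.e. $A+B=\sum_i d_i=31p^e\mp15$. For the difference, Lemma \ref{deltaquasi} yields $|A-B|=\tau((p^e\mp1)/2,p^e,p^e)=30\,\delta\bigl((p^e\mp1)/2,\tfrac{p^e}{3},\tfrac{p^e}{5}\bigr)=:D_\mp$. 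Substituting these two series into the recursion, the factors $1\pm t^{15}$ cancel and $\lK_{M^e}(t)$ reduces to $\frac{t^{e_1}+t^{e_2}+t^{e_3}+t^{e_4}}{(1-t^{10})(1-t^6)}$, whose four exponents are symmetric about $c:=(31p^e+15)/2$ with radii $\tfrac12 D_-$ and $\tfrac12 D_+$.

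Comparing with the numerators of $\lK_{M_1}$ and $\lK_{M_8}$, the exponent sum $62p^e+30$ forces $l=\tfrac{31}{2}(p^e-1)$ in the $M_1$-case and $l=\tfrac{31}{2}(p^e-1)-3$ in the $M_8$-case, which are exactly the claimed shifts; moreover the radii are $\{7,2\}$ for $M_1$ and $\{4,1\}$ for $M_8$. Thus everything reduces to showing $\{D_-,D_+\}=\{14,4\}$ when $p^e\bmod 30\in\{\pm1,\pm11\}$ and $\{D_-,D_+\}=\{8,2\}$ when $p^e\bmod 30\in\{\pm7,\pm13\}$.

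The main work, and the main obstacle, is this last evaluation of $\delta((p^e\mp1)/2,p^e/3,p^e/5)$ via Han's Theorem \ref{hansthm}. Here $(p^e\mp1)/2$ is already an integer, so in $\Vert p^st-u\Vert_1$ the first coordinate contributes $0$ with $u_1=p^s(p^e\mp1)/2$ forced, and one only needs to approximate $p^e/3$ and $p^e/5$ by integers subject to the odd-lattice parity of $u_1+u_2+u_3$. Taking $s=0$ and the nearest integers already gives $\Vert t-u\Vert_1\le\tfrac{8}{15}<1$ up to the parity constraint, which then pins $\delta=1-\Vert t-u\Vert_1$ to one of $\tfrac{7}{15},\tfrac{2}{15},\tfrac{4}{15},\tfrac{1}{15}$ according to $p^e\bmod 3$ and $p^e\bmod 5$. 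I expect the delicate point to be the parity of $u_1=(p^e\mp1)/2$: it depends on $p^e\bmod 4$ and merely interchanges $D_-$ and $D_+$, but since only the unordered pair $\{D_-,D_+\}$ enters the symmetric exponent pattern, the final answer depends on $p^e\bmod 30$ alone. I would also separately verify the strict triangle inequality, which can fail for the smallest value (e.g. $p^e=11$ in the $+$-case, where the first clause $\delta=t_1-t_2-t_3$ of Theorem \ref{hansthm} applies and still gives $D_+=4$), and check those finitely many small $p^e$ directly, so that the residue computation is valid for every $e$.
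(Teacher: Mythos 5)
Your proposal is correct, and its core is the same as the paper's: Lemma \ref{fpbindec} forces $M^e$ to be a shift of $M_1$ or $M_8$, and the identification is made by computing $\lK_{M^e}$ via Theorem \ref{HilbSer}, Lemma \ref{deltaquasi} and Han's Theorem \ref{hansthm}; your bookkeeping of the four numerator exponents (common centre $(31p^e+15)/2$, radii $\tfrac12 D_\mp$) reproduces exactly the shifts $\tfrac{31}{2}(p^e-1)$ and $\tfrac{31}{2}(p^e-1)-3$. The one genuinely different step is how you pass from $e=1$ to general $e$. The paper evaluates $\delta$ only for $e=1$ (running through $p$ modulo $60$), and in the residues $\pm7,\pm13$ it additionally computes $\lK_{\Syz_R(X^p,Y^p,Z^{2p})}$ to see that Frobenius swaps $M_1$ and $M_8$ up to twist; the general case then follows by composing Frobenius pull-backs, using that $\{\pm1,\pm11\}$ is the subgroup of squares in $(\Z/30)^{\times}$. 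You instead evaluate $\delta\bigl((p^e\mp1)/2,p^e/3,p^e/5\bigr)$ for arbitrary $e$ in one pass, noting that the unordered pair $\{D_-,D_+\}$ depends only on the fractional parts of $p^e/3$ and $p^e/5$ because the parities of the consecutive integers $(p^e-1)/2$ and $(p^e+1)/2$ always realize both parity constraints on $u_2+u_3$. This is more uniform and avoids the auxiliary $Z^{2p}$ computation, at the cost of not exhibiting the $(0,2)$-Frobenius periodicity explicitly (which the paper reads off and reuses). Your separate check of the degenerate triangle inequality for $p^e\le 15$ is a point the paper silently skips. Both treatments leave the same small elision: the minimality of $s=0$ in Han's Theorem (ruling out negative $s$) is asserted rather than proved, and for $p\in\{7,11,13\}$ it is only the parity constraint on $L_{\text{odd}}$ that rescues it, so this deserves at least the one sentence the paper gives it.
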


\begin{proof}
Going back to Equation (\ref{syzsplit}), we obtain by Lemma \ref{deltaquasi} the equality 
$$|m-n|=30\cdot\delta\left(\frac{p^e\mp 1}{2},\frac{p^e}{3},\frac{p^e}{5}\right).$$
First we assume $e=1$ and $p=60\cdot l+1$ for some $l\in\N$. We want to compute 
$$\delta\left(\frac{p-1}{2},\frac{p}{3},\frac{p}{5}\right)=\delta\left(30\cdot l,10\cdot l+\frac{1}{3},6\cdot l+\frac{1}{5}\right).$$
It is easy to see that the $s$ from Han's Theorem has to be non-negative. Since the taxi-cap distance of 
$\left(30\cdot l,10\cdot l+\tfrac{1}{3},6\cdot l+\tfrac{1}{5}\right)$ to the odd lattice is given by 
$$\left\Vert\begin{pmatrix}30\cdot l \\ 10\cdot l+\frac{1}{3} \\ 6\cdot l+\frac{1}{5}\end{pmatrix}-\begin{pmatrix} 30\cdot l \\ 10\cdot l+1\\ 6\cdot l\end{pmatrix}\right\Vert_1=\frac{2}{3}+\frac{1}{5}=\frac{13}{15},$$
we obtain $\delta\left(\tfrac{p-1}{2},\tfrac{p}{3},\tfrac{p}{5}\right)=\tfrac{2}{15}$ and $|m-n|=4$. Treating the other possibilities for $p$ modulo $60$ and 
computing the values of $\delta\left(\tfrac{p-1}{2},\tfrac{p}{3},\tfrac{p}{5}\right)$ one obtains 
$$\begin{aligned}
& \lK_{\Syz_R(X^p,Y^p,Z^p)}(t)\\
= & \left\{\begin{aligned} & t^{31\cdot\frac{p-1}{2}}\cdot \lK_{\Syz_R(X,Y,Z)}(t) && \text{if } p\text{ mod }30\in \{\pm 1,\pm 11\},\\ & t^{31\cdot\frac{p-1}{2}-3}\cdot \lK_{\Syz_R(X,Y,Z^2)}(t) && \text{if } p\text{ mod }30\in \{\pm 7,\pm 13\},\end{aligned}\right.
\end{aligned}$$
This already shows 
$$\Syz_R(X^{p^e},Y^{p^e},Z^{p^e})\cong \Syz_R(X,Y,Z)(-\tfrac{31}{2}(p^e-1))$$
if $p\text{ mod }30\in \{\pm 1,\pm 11\}$. Computing the 
Hilbert-series of $\Syz_R(X^p,Y^p,Z^{2p})$ one finds
$$\lK_{\Syz_R(X^p,Y^p,Z^{2p})}(t) = \left\{\begin{aligned} & t^{37\cdot\frac{p-1}{2}}\cdot \lK_{\Syz_R(X,Y,Z^2)}(t) && \text{if } p\text{ mod }30\in \{\pm 1,\pm 11\},\\ & t^{37\cdot\frac{p-1}{2}+3}\cdot \lK_{\Syz_R(X,Y,Z)}(t) && \text{if } p\text{ mod }30\in \{\pm 7,\pm 13\}.\end{aligned}\right.$$
\end{proof}

The previous lemma shows that we have a $(0,1)$-Frobenius periodicity (on the punctured spectrum) if $p\text{ mod }30\in\{\pm 1,\pm 11\}$ and a 
$(0,2)$-Frobenius periodicity if $p\text{ mod }30\in\{\pm 7,\pm 13\}$.

Similarly, one obtains Frobenius periodicities of $\Syz(X,Y,Z)$ on the punctured spectrum for all $ADE$-rings.

\section{The Hilbert-Kunz functions of the ADE-rings}
In this section we put everything together to compute the Hlbert-Kunz functions of the $ADE$-rings.

\begin{thm}\label{hkfe8}
The Hilbert-Kunz function of $R=k[X,Y,Z]/(X^2+Y^3+Z^5)$ is given by
$$e\longmapsto\left\{\begin{aligned}
 & 2\cdot p^{2e} && \text{if } e\geq 1\text{ and }p\in\{2,3,5\},\\
 & \left(2-\frac{1}{120}\right)p^{2e}-\frac{191}{120} && \text{if }p\text{ mod }30\in\{\pm 7,\pm 13\}\text{ and }e\text{ is odd},\\
 & \left(2-\frac{1}{120}\right)p^{2e}-\frac{119}{120} && \text{otherwise.}
\end{aligned}\right.$$
\end{thm}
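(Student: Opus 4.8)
The plan is to pass to the standard-graded Fermat ring, transport the Frobenius periodicity of Lemma~\ref{syzpbe8} across the flat map $\bar{\phi}$, and then feed the result into Lemma~\ref{syznontr}. Give $R$ its $ADE$-grading $\Deg(X)=15$, $\Deg(Y)=10$, $\Deg(Z)=6$, so that $F$ is homogeneous of degree $30$, and consider the degree-preserving flat map $\bar{\phi}\colon R\ra S:=k[U,V,W]/(U^{30}+V^{30}+W^{30})$, $X\mapsto U^{15}$, $Y\mapsto V^{10}$, $Z\mapsto W^6$. The base-change identity from the introduction gives
$$\HKF((X,Y,Z),R,p^e)=\frac{1}{900}\,\Dim_k\bigl(S/(U^{15p^e},V^{10p^e},W^{6p^e})\bigr)=\frac{1}{900}\,\HKF(I,S,p^e),$$
with $I:=(U^{15},V^{10},W^6)$ and $900=\Deg(X)\Deg(Y)\Deg(Z)$. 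For $p\geq 7$ the curve $C:=\Proj(S)$ is the smooth Fermat curve of degree $d:=\Deg(C)=30$, so $S$ is a normal standard-graded domain and Lemma~\ref{syznontr} applies over $S$ with exponents $\alpha=15$, $\beta=10$, $\gamma=6$.

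Next I would transport Lemma~\ref{syzpbe8} to $S$. Since $\bar{\phi}$ is flat and degree-preserving, tensoring the presenting sequence of $\Syz_R(X^{p^e},Y^{p^e},Z^{p^e})$ with $S$ yields $\Syz_S(I^{[p^e]})\cong\Syz_S(U^{15},V^{10},W^6)(-n)$ with $n=31\cdot\tfrac{p^e-1}{2}$ when $p^e\bmod 30\in\{\pm1,\pm11\}$, and $\Syz_S(I^{[p^e]})\cong\Syz_S(U^{15},V^{10},W^{12})(-n)$ with $n=\tfrac{31p^e-37}{2}$ when $p^e\bmod 30\in\{\pm7,\pm13\}$; the shift is unchanged because $\bar{\phi}$ preserves degrees, and in each case $n$ agrees with the value $\tfrac{p^e(\alpha+\beta+\gamma)-a-b-c}{2}$ forced in the proof of Lemma~\ref{syznontr}, which is a useful consistency check. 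Since $p\bmod 30\in\{\pm7,\pm13\}$ forces $p^2\equiv-11$, the second case occurs precisely for $e$ odd, whereas the combinations making up the word ``otherwise'' (any $e$ if $p\bmod 30\in\{\pm1,\pm11\}$, and even $e$ if $p\bmod 30\in\{\pm7,\pm13\}$) all give $p^e\bmod 30\in\{\pm1,\pm11\}$.

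I would then run Lemma~\ref{syznontr} in both cases, using $Q(15,10,6)=239$. In the first case $(a,b,c)=(15,10,6)$ gives $Q(a,b,c)=239$ and $D=\Dim_k(S/(U^{15},V^{10},W^6))=900$, the Fermat relation being redundant once $U^{15}=V^{10}=W^6=0$; hence $\HKF(I,S,p^e)=\tfrac{30\cdot239\,(p^{2e}-1)}{4}+900$, and dividing by $900$ yields $(2-\tfrac{1}{120})p^{2e}-\tfrac{119}{120}$. In the second case $(a,b,c)=(15,10,12)$ gives $Q(a,b,c)=431$ and $D=\Dim_k(S/(U^{15},V^{10},W^{12}))=1800$, and the same division yields $(2-\tfrac{1}{120})p^{2e}-\tfrac{191}{120}$. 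This disposes of every prime for which $120$ is invertible, and the main bookkeeping risk here is simply tracking the twist $n$ and the factor $d=30$ correctly through $\bar{\phi}$.

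The genuine obstacle is the remaining finite set of primes $p\in\{2,3,5\}$. For these $120$ is not invertible, the Fermat curve $C$ degenerates, and the invariant-theoretic machinery—hence Lemmas~\ref{fpbindec} and~\ref{syzpbe8}, and the applicability of Lemma~\ref{syznontr} over $S$—is unavailable; moreover the Hilbert-Kunz multiplicity jumps to the maximal value $2$ rather than $2-\tfrac{1}{120}$, so the generic argument cannot even give the right leading term. For these finitely many characteristics I would instead compute $\HKF((X,Y,Z),R,p^e)=2p^{2e}$ by a direct length count (for instance via the Han--Monsky algorithm for $E_8$ noted in the introduction, applied once the characteristic is fixed) and check that the correction term vanishes identically in $e$.
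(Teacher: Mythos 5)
Your treatment of the generic characteristics is correct and is essentially the paper's own argument: the paper likewise feeds Lemma~\ref{syzpbe8} into Lemma~\ref{syznontr} with $d=30$, $Q(15,10,6)=239$, $Q(15,10,12)=431$, $D=900$ resp.\ $1800$, and the division by $900=\Deg(X)\Deg(Y)\Deg(Z)$ folded into the denominator $30\cdot 120$. Your parity bookkeeping ($p^2\equiv -11 \bmod 30$ when $p\bmod 30\in\{\pm 7,\pm 13\}$, so the second case occurs exactly for odd $e$) and the consistency check on the twist $n$ are fine.

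The gap is the case $p\in\{2,3,5\}$. You correctly observe that the generic machinery breaks down there, but ``compute $\HKF=2p^{2e}$ by a direct length count and check that the correction term vanishes'' is not an argument: a length count for each $e$ is an infinite task, and you never establish the structural reason why the answer is exactly $2p^{2e}$ with no correction term. (Invoking Han--Monsky for the diagonal hypersurface $X^2+Y^3+Z^5$ at a fixed small prime would in principle produce a closed form, but you would still have to run it for $p=2,3,5$ and you do not.) The paper's argument here is one line and much more elementary than you anticipate: for $p=2$ one has $X^2=-(Y^3+Z^5)$ in $R$, so $\Syz_R(X^2,Y^2,Z^2)=\Syz_R(Y^3+Z^5,Y^2,Z^2)$ is the extension to $R$ of a syzygy module defined over $k[Y,Z]$, hence splits as $R(-30)\oplus R(-32)$ by Hilbert's Syzygy Theorem; Frobenius pull-back preserves this splitting, so $\Syz_R(X^{2^e},Y^{2^e},Z^{2^e})\cong R(-15\cdot 2^e)\oplus R(-16\cdot 2^e)$, and Lemma~\ref{syztr} (equivalently, a direct Hilbert-series computation from the finite free resolution of $R/I^{[2^e]}$) gives $2\cdot 2^{2e}$ exactly. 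The primes $3$ and $5$ are handled identically using $Y^3=-(X^2+Z^5)$ and $Z^5=-(X^2+Y^3)$. Without some such splitting argument your proof of the first line of the theorem is incomplete.
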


\begin{proof}
By Lemma \ref{syzpbe8} we know the isomorphism classes of the syzygy modules $\Syz_R(X^{p^e},Y^{p^e},Z^{p^e})$. 
This allows us to use Lemma \ref{syznontr} and we obtain
\begin{align*}
\HKF(R,p^e) &= \frac{30\cdot Q(15,10,6)\cdot p^{2e}-30\cdot Q(15,10,6)+30\cdot 120}{30\cdot 120}\\
&= \frac{239\cdot p^{2e}-239+120}{120}\\
&= \left(2-\frac{1}{120}\right)p^{2e}-\frac{119}{120},
\end{align*}
if $p^e\text{ mod }30\in\{\pm 1,\pm 11\}$ and
\begin{align*}
\HKF(R,p^e) &= \frac{30\cdot Q(15,10,6)\cdot p^{2e}-30\cdot Q(15,10,12)+30\cdot 240}{30\cdot 120}\\
&= \frac{239\cdot p^{2e}-431+240}{120}\\
&= \left(2-\frac{1}{24}\right)p^{2e}-\frac{191}{120},
\end{align*}
if $p^e\text{ mod }30\in\{\pm 7,\pm 13\}$.

If $p\in\{2,3,5\}$ we see that already $\Syz_R(X^p,Y^p,Z^p)$ splits, since in each case $X^p$, $Y^p$ or $Z^p$ is a polynomial in the other two variables. 
For example if $p=2$, we have
\begin{align*}
\Syz_R(X^2,Y^2,Z^2) &= \Syz_R(Y^3+Z^5,Y^2,Z^2)\\
&\cong R(-30)\oplus R(-32),
\end{align*}

hence $\Syz_R(X^{2^e},Y^{2^e},Z^{2^e})\cong R(-15\cdot 2^e)\oplus R(-16\cdot 2^e)$. Using Lemma \ref{syztr} one obtains 
$\HKF_R(R,p^e)=2\cdot p^{2e}$ for $e\geq 1$.
\end{proof}

Similarly, one obtains the Hilbert-Kunz functions of the rings of type $E_6$ and $E_7$. In the case of an $E_6$ singularity we already know from 
Lemma \ref{fpbindec} that
$$\Syz_R\left(X^{p^e},Y^{p^e},Z^{p^e}\right)\cong\Syz_R(X,Y,Z)\left(-\frac{13}{2}(p^e-1)\right)$$
holds for all $e\geq 0$ and all $p\geq 5$ (since there is no other possibility). 
Using Lemmata \ref{syznontr} and \ref{syztr} one obtains the following Hilbert-Kunz function.

\begin{thm}\label{hkfe6}
The Hilbert-Kunz function of $R=k[X,Y,Z]/(X^2+Y^3+Z^4)$ is given by
$$e\longmapsto\left\{\begin{aligned}
 & 2\cdot p^{2e} && \text{if } e\geq 1\text{ and }p\in\{2,3\},\\
 & \left(2-\frac{1}{24}\right)p^{2e}-\frac{23}{24} && \text{otherwise.}
\end{aligned}\right.$$
\end{thm}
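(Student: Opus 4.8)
The plan is to follow the template of the proof of Theorem \ref{hkfe8} and split the argument according to whether the group order $|G|=24$ is invertible in $k$, i.e. whether $p\notin\{2,3\}$ or $p\in\{2,3\}$. As in the $E_8$ case I would run the computation through the standard-graded model $S:=k[U,V,W]/(U^{12}+V^{12}+W^{12})$ attached to $R$ by the flat map $\bar\phi$, together with the ideal $I:=(U^6,V^4,W^3)$ and the smooth plane curve $C:=\Proj(S)$ of degree $d=12$. Here $\Deg(X)=6$, $\Deg(Y)=4$, $\Deg(Z)=3$, so that $Q(6,4,3)=47$ and $\Dim_k(S/(U^6,V^4,W^3))=72$, and these constants are all I need to feed into Lemmata \ref{syznontr} and \ref{syztr}.

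For the generic primes $p\notin\{2,3\}$ I would use the periodicity recorded before the theorem, namely $\Syz_R(X^{p^e},Y^{p^e},Z^{p^e})\cong\Syz_R(X,Y,Z)(-\tfrac{13}{2}(p^e-1))$, which follows from Lemma \ref{fpbindec} because $E_6$ admits a single isomorphism class of indecomposable rank-two module realised as the syzygy module of an $\mm$-primary ideal. Translating this isomorphism to $S$ places us exactly in the setting of Lemma \ref{syznontr} with $(a,b,c)=(\alpha,\beta,\gamma)=(6,4,3)$, so that $Q(a,b,c)=Q(\alpha,\beta,\gamma)$ and $D=72$. Substituting then yields
$$\HKF(R,p^e)=\frac{12\cdot Q(6,4,3)\,p^{2e}-12\cdot Q(6,4,3)+12\cdot 24}{12\cdot 24}=\frac{47\,p^{2e}-47+24}{24}=\left(2-\tfrac{1}{24}\right)p^{2e}-\tfrac{23}{24},$$
which is the second branch of the asserted formula.

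For the special primes $p\in\{2,3\}$ I expect $\Syz_R(X^p,Y^p,Z^p)$ to split already into two twisted copies of $R$, because the defining relation lets one of $X^p,Y^p,Z^p$ be absorbed by the ideal generated by the other two. Concretely, for $p=2$ one has $X^2=-(Y^3+Z^4)$ with $Y^3=Y\cdot Y^2$ and $Z^4=Z^2\cdot Z^2$, so the manipulations of Proposition \ref{manipulationlemma} turn the first generator into $0$ and give $\Syz_R(X^2,Y^2,Z^2)\cong R(-12)\oplus\Syz_R(Y^2,Z^2)\cong R(-12)\oplus R(-14)$; for $p=3$ the relation $Y^3=-(X^2+Z^4)$ together with $Z^4=Z\cdot Z^3$ and $X^3=X\cdot X^2$ reduces matters to $\Syz_R(X^2,Z^3)$ and yields $\Syz_R(X^3,Y^3,Z^3)\cong R(-18)\oplus R(-21)$. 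Pulling these splittings up with the Frobenius by Proposition \ref{syzprop}(6) gives $\Syz_R(X^{p^e},Y^{p^e},Z^{p^e})\cong R(-6p^e)\oplus R(-7p^e)$ for every $e\geq 1$, so that the degree difference is $|n-l|=p^e$, and feeding this into Lemma \ref{syztr} produces $\HKF(R,p^e)=2\,p^{2e}$, the first branch.

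The routine part is the generic regime, which is a direct substitution once the periodicity from Lemma \ref{fpbindec} is in hand. The main obstacle is the special-prime analysis: I would have to verify carefully that the sequence of admissible manipulations from Proposition \ref{manipulationlemma} really reduces $\Syz_R(X^p,Y^p,Z^p)$ to a sum of free modules with the exact degree shifts claimed, that the regular-sequence computation of $\Syz_R(Y^2,Z^2)$ resp. $\Syz_R(X^2,Z^3)$ is valid, and that the splitting persists under all higher Frobenius pull-backs (equivalently, that $R/I^{[p^e]}$ has finite projective dimension for all $e\geq 1$, as noted in Lemma \ref{syztr}).
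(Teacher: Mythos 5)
Your proposal is correct and follows essentially the same route as the paper: for $p\geq 5$ it invokes the periodicity $\Syz_R(X^{p^e},Y^{p^e},Z^{p^e})\cong\Syz_R(X,Y,Z)(-\tfrac{13}{2}(p^e-1))$ coming from Lemma \ref{fpbindec} and the uniqueness of the indecomposable rank-two module with trivial determinant arising as the syzygy module of an $\mm$-primary ideal, then feeds the constants $d=12$, $Q(6,4,3)=47$, $D=72$ into Lemma \ref{syznontr}; for $p\in\{2,3\}$ it shows $\Syz_R(X^p,Y^p,Z^p)$ already splits as $R(-6p)\oplus R(-7p)$ via the manipulations of Proposition \ref{manipulationlemma} and applies Lemma \ref{syztr}. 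All the numerical values (twists, degree shifts, and the resulting coefficients $2-\tfrac{1}{24}$ and $-\tfrac{23}{24}$) agree with the paper.
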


If $R$ is of type $E_7$ and $p\geq 5$, we have similarly to the $E_8$ case the two possibilities 
\begin{align*}
\Syz_R(X^p,Y^p,Z^p) & \cong\Syz_R(X,Y,Z)\left(-\frac{19}{2}\cdot (p-1)\right) \text{ or}\\ 
\Syz_R(X^p,Y^p,Z^p) & \cong\Syz_R(X,Y,Z^2)\left(-\frac{19}{2}\cdot (p-1)+2\right).
\end{align*}
Computing the Hilbert-series, one sees that the first case appears if 
$p\text{ mod }24\in\{\pm 1,\pm 7\}$ yielding a $(0,1)$-Frobenius periodicity of $\Syz(X,Y,Z)$ on the punctured spectrum. If $p\text{ mod }24\in\{\pm 5,\pm 11\}$ 
the second case holds and one obtains a $(0,2)$-Frobenius periodicity on the punctured spectrum.

\begin{thm}\label{hkfe7}
 The Hilbert-Kunz function of $E_7$ is given by
$$e\longmapsto\left\{\begin{aligned}
 & 2\cdot p^{2e} && \text{if } e\geq 1\text{ and }p\in\{2,3\},\\
 & \left(2-\frac{1}{48}\right)p^{2e}-\frac{71}{48} && \text{if }p\text{ mod }24\in\{\pm 5,\pm 11\}\text{ and }e\text{ is odd},\\
 & \left(2-\frac{1}{48}\right)p^{2e}-\frac{47}{48} && \text{otherwise.}
\end{aligned}\right.$$
\end{thm}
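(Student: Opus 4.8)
The plan is to mirror the structure of the proof of Theorem \ref{hkfe8} exactly, since the $E_7$ case is completely parallel to the $E_8$ case. The key inputs are the Frobenius periodicity statement for $\Syz_R(X,Y,Z)$ just recorded before the theorem, together with Lemma \ref{syznontr} for the generic (non-free) case and Lemma \ref{syztr} for the small primes. First I would fix $R=k[X,Y,Z]/(X^2+Y^3+YZ^3)$ and record the grading data: the standard homogenization (via the map $\bar\phi$ from the introduction) sends the problem to a weighted setting where $\Deg(X)$, $\Deg(Y)$, $\Deg(Z)$ are determined by $X^2=Y^3=YZ^3$, giving weights proportional to $(9,6,4)$, so that the relevant triple for the quadratic form $Q$ is $(\alpha,\beta,\gamma)=(9,6,4)$ and $|G|=48=4\cdot 12$. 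I would also record $\Deg(R)=\Deg(C)=d$, the degree of the projective curve $C=\Proj(R)$, which enters the formula of Lemma \ref{syznontr}.

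The main computation splits according to the two periodicity cases already established. \textbf{Case 1: $p\bmod 24\in\{\pm1,\pm7\}$.} Here $\Syz_R(X^{p^e},Y^{p^e},Z^{p^e})\cong\Syz_R(X,Y,Z)(-\tfrac{19}{2}(p^e-1))$ for all $e$, so I apply Lemma \ref{syznontr} with $(a,b,c)=(9,6,4)$ (the degrees of $X,Y,Z$) and compute $D:=\Dim_k(R/(X,Y,Z))$, which is just the vector-space dimension of $R$ modulo its homogeneous maximal ideal. Plugging $Q(\alpha,\beta,\gamma)$ and $Q(a,b,c)$ into the formula and simplifying should produce $\bigl(2-\tfrac{1}{48}\bigr)p^{2e}-\tfrac{47}{48}$. \textbf{Case 2: $p\bmod 24\in\{\pm5,\pm11\}$.} Here the periodicity alternates: for $e$ odd one lands in $\Syz_R(X,Y,Z^2)(-\tfrac{19}{2}(p^e-1)+2)$, for $e$ even one returns to the $\Syz_R(X,Y,Z)$ pattern. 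I would apply Lemma \ref{syznontr} with the appropriate triple of generator-degrees for each parity — for the odd case using the generators $X,Y,Z^2$ — and verify that the leading coefficient stays $2-\tfrac{1}{48}$ (since $Q$ of the ambient degrees is unchanged) while the constant term changes to $-\tfrac{71}{48}$ in the odd subcase. The even subcase of Case 2 should reproduce the ``otherwise'' value $-\tfrac{47}{48}$.

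For the excluded small primes $p\in\{2,3\}$ I would argue exactly as in Theorem \ref{hkfe8}: in each such characteristic one of $X^p,Y^p,Z^p$ becomes expressible via the relation $X^2+Y^3+YZ^3=0$ (for $p=2$, $X^2=-(Y^3+YZ^3)$ lies in the ideal generated by the other two powers; for $p=3$ one checks $Y^3$ similarly), so that $\Syz_R(X^p,Y^p,Z^p)$ already splits as $R(-n)\oplus R(-l)$, and this splitting persists for all $e\geq1$ by the remark in Lemma \ref{syztr} on finite projective dimension. Then Lemma \ref{syztr} gives $\HKF(I,R,p^e)=\tfrac{d\cdot Q(\alpha,\beta,\gamma)\cdot p^{2e}+d\cdot|n-l|^2}{4}$, and I would check the degree shifts $n,l$ of the splitting to confirm this equals $2\cdot p^{2e}$, matching the Hilbert-Kunz multiplicity $2-\tfrac{1}{|G|}$ collapsing to $2$ when the group order is not invertible.

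The step I expect to be the main obstacle is the bookkeeping of the weighted-to-standard normalization: the factor $\Deg(X)\Deg(Y)\Deg(Z)$ from the introduction's formula, the curve degree $d$, and the constants in Lemma \ref{syznontr} must be reconciled so that the arithmetic of $Q$ lands precisely on $-\tfrac{47}{48}$ versus $-\tfrac{71}{48}$. In particular I would double-check which triple of generator-degrees to feed into $Q(a,b,c)$ in the two periodicity cases and recompute $D$ honestly for the ideal $(X,Y,Z^2)$ in the odd subcase, since an error in $D$ or in the degree shift from Lemma \ref{syzpbe8}'s $E_7$-analogue propagates directly into the constant term. Everything else is a direct substitution into the two lemmas already proved.
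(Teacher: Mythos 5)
Your proposal follows exactly the route the paper intends for $E_7$: the paper only records the two periodicity alternatives for $p\bmod 24\in\{\pm 1,\pm 7\}$ versus $\{\pm 5,\pm 11\}$ and declares the rest "similar" to Theorem \ref{hkfe8}, and your substitutions — weights $(9,6,4)$, hence $d=18$, $Q(9,6,4)=95$ with $D=216$ in the first case and $Q(9,6,8)=167$ with $D=432$ in the $\Syz_R(X,Y,Z^2)$ case, all divided by $4\cdot 216$ — do land on $\bigl(2-\tfrac{1}{48}\bigr)p^{2e}-\tfrac{47}{48}$ and $-\tfrac{71}{48}$ respectively, and Lemma \ref{syztr} gives $2p^{2e}$ for $p\in\{2,3\}$. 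The one slip is your parenthetical for $p=3$: it is $X^3$, not $Y^3$, that becomes redundant, since $X^2=-Y(Y^2+Z^3)\in(Y^3,Z^3)$ yields $\Syz_R(X^3,Y^3,Z^3)\cong R(-27)\oplus\Syz_R(Y^3,Z^3)\cong R(-27)\oplus R(-30)$, whereas $Y^3=-X^2-YZ^3\notin(X^3,Z^3)$.
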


It remains to treat the family of $D_n$ singularities. In this case we failed to compute the Hilbert-series of $\Syz_R(X^{p^e},Y^{p^e},Z^{p^e})$ 
in general. If one fixes a concrete value for $n$, it is possible to compute the Hilbert-series and everything works as in the $E$ situations. 
In the general case we change the approach to identify the isomorphism class of $\Syz_R(X^{p^e},Y^{p^e},Z^{p^e})$. 

Let $R:=k[X,Y,Z]/(X^2+Y^{n+1}+YZ^2)$ with $\Deg(X)=n+1$, $\Deg(Y)=2$, $\Deg(Z)=n$ and $\chara(k)=p\nmid 4n$. Then $R$ has an $D_{n+2}$ singularity and 
injects into a two-dimensional ring with an $A_{2n-1}$ singularity. The inclusion is given by
$$\phi:R\lra k[U,V,W]/(U^{2n}-VW)$$ 
and the pull-back behavior of the maximal Cohen-Macaulay modules can be read off Table \ref{tabpbd}. We will again use curly letters for punctured spectra 
and modules over them. By Lemma \ref{fpbindec} we have 
\begin{equation}
\fpb{e}(\Syz_{\Rc}(\mm))=\fpb{e}(\Mcc_2)\cong \Mcc_{2m}  
\end{equation}
for all $e\in\N$ and $m=m(q)\in\{1,\ldots,\tfrac{n}{2}\}$. The parameter $m=m(q)$ can be computed by considering the pull-back via $\phi$: 
$\Mcc_2$ pulls back to $\Nc_1\oplus\Nc_{2n-1}$ on $\Ac_{2n-1}$. The $e$-th Frobenius pull-back of this module is $\Nc_r\oplus\Nc_{2n-r}$ with 
$r\equiv p^e$ mod $2n$ (and $1\leq r\leq 2n-1$). Note that $r$ is odd by the assumption $p\nmid 4n$. If $r\leq n-1$, only $\Mcc_{r+1}$ pulls back to $\Nc_r\oplus\Nc_{2n-r}$ via $\phi$. 
If $r\geq n$, only $\Mcc_{2n-r-1}$ pulls back to $\Nc_r\oplus\Nc_{2n-r}$ via $\phi$. But $\Mcc_{r+1}$ and $\Mcc_{2n-r-1}$ are isomorphic on $\Rc$ as one sees 
by computing the first syzygy modules from the matrix factorizations by using different columns (cf. \cite[Section 3.5]{drdaniel}). We find 
$$\Syz_{\Rc}\left(U^{p^e},V^{p^e},W^{p^e}\right)\cong\Syz_{\Rc}(U,V^{(r+1)/2},W),$$ 
where $r\equiv p^e$ mod $2n$ with $1\leq r\leq 2n-1$. This isomorphism induces a global isomorphism
$$\Syz_R\left(U^{p^e},V^{p^e},W^{p^e}\right)\cong\Syz_R(U,V^{(r+1)/2},W)(m),$$
for some $m\in\Z$. The Hilbert-Kunz function can be computed via Lemma \ref{syznontr}.

Note that we can define all appearing syzygy modules in the case $p|4n$ as well. 
Since the computations made to obtain Table \ref{tabpbd} are correct in all odd characteristics, the above computation of the Hilbert-Kunz function 
remains correct in the case $p|4n$ with $p$ odd, if we would know that our list of maximal Cohen-Macaulay modules is complete.

By a theorem of Herzog (cf. \cite[Theorem 6.3]{leuwie}) we only need that $R$ is a direct summand of the polynomial ring $k[u,v]$, on which the 
group $\D_n$ acts, to have an one to one correspondence between indecomposable, maximal Cohen-Macaulay $R$-modules and direct summands of $k[u,v]$ as 
an $R$-module.

Leuschke and Wiegand point out in \cite[Remark 6.22]{leuwie} that $R$ is a direct summand of $k[u,v]$ and that the direct summands of $k[u,v]$ 
as an $R$-module are the same as in the non-modular case (meaning that the algebraic definition is the same, although the modules in the modular 
case might not come from representations of the group $\D_n$). Therefore, the formula above is a full description of the Hilbert-Kunz function 
for odd characteristics.

To compute the Hilbert-Kunz function for $p=2$, we can use Lemma \ref{syztr}. All in all, one obtains

\begin{thm}\label{hkfdn}
 The Hilbert-Kunz function of $D_{n+2}$ is given by (with $r\equiv p^e$ mod $2n$ for $0\leq r\leq 2n-1$)
$$e\longmapsto\left\{\begin{aligned}
 & 2\cdot p^{2e} && \text{if } e\geq 1,p=2,\\
 & \left(2-\frac{1}{4n}\right)p^{2e}-\frac{r+1}{2}+\frac{r^2}{4n} && \text{otherwise.}
          \end{aligned}\right.$$
\end{thm}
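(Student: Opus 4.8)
The plan is to turn the periodicity isomorphism
$$\Syz_R(X^{p^e},Y^{p^e},Z^{p^e})\cong\Syz_R(X,Y^{(r+1)/2},Z)(m)$$
obtained just before the theorem (valid for every odd $p$, with $r\equiv p^e \bmod 2n$, $1\le r\le 2n-1$, $r$ odd) into the explicit formula of Lemma \ref{syznontr}. Since that lemma lives in the standard-graded world, I would first push everything through the flat map $\bar\phi\colon R\to S:=k[U,V,W]/(\phi(F))$ with $\phi(F)=U^{2n+2}+V^{2n+2}+V^2W^{2n}$, under which the weights $\Deg(X)=n+1$, $\Deg(Y)=2$, $\Deg(Z)=n$ send $\mm$ to the $S_+$-primary ideal $I=(U^{n+1},V^2,W^n)$ and the right-hand side above to $\Syz_S(U^{n+1},V^{r+1},W^n)$ up to a twist. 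The curve $C:=\Proj(S)$ is integral of degree $d=2n+2$; it has one singular point at $[0:0:1]$, but this does not obstruct Lemma \ref{syznontr}, because $I^{[p^e]}$ and $(U^{n+1},V^{r+1},W^n)$ are $S_+$-primary, so by Proposition \ref{syzprop}(2),(4) their syzygy sheaves are locally free on $C$ with a well-defined degree, and the proof of the lemma uses only Riemann-Roch and Serre vanishing on an integral projective curve.

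With the setup in place the computation is bookkeeping. A direct expansion of the quadratic form gives $Q(n+1,2,n)=8n-1$ and $Q(n+1,r+1,n)=4n(r+1)-r^2$. For the constant $D=\Dim_k\bigl(S/(U^{n+1},V^{r+1},W^n)\bigr)$ I would note that $\phi(F)$ lies in that ideal, since each of $U^{2n+2},V^{2n+2},V^2W^{2n}$ is a multiple of $U^{n+1}$, of $V^{r+1}$ (using $r+1\le 2n$), or of $W^n$; hence the quotient is the monomial Artinian ring $k[U,V,W]/(U^{n+1},V^{r+1},W^n)$ of dimension $D=n(n+1)(r+1)$. Feeding $d=2n+2$ together with these values into Lemma \ref{syznontr} and then dividing by $\Deg(X)\Deg(Y)\Deg(Z)=2n(n+1)$ (the flat-map comparison of Hilbert-Kunz functions), I expect the common factor $2(n+1)$ to cancel and the answer to reduce to
$$\Bigl(2-\tfrac{1}{4n}\Bigr)p^{2e}-\tfrac{r+1}{2}+\tfrac{r^2}{4n},$$
the claimed value; the case $e=0$ (where $r=1$) returns $1$, a reassuring check.

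The prime $p=2$ must be treated separately, via Lemma \ref{syztr}. In characteristic two the defining relation gives $X^2=Y^{n+1}+YZ^2\in(Y^2,Z^2)$, and applying the Frobenius yields $X^{2^e}=Y^{(n-1)2^{e-1}}\cdot Y^{2^e}+Y^{2^{e-1}}\cdot Z^{2^e}\in(Y^{2^e},Z^{2^e})$. Combining the resulting syzygy with the Koszul relation on the regular sequence $Y^{2^e},Z^{2^e}$ exhibits
$$\Syz_R(X^{2^e},Y^{2^e},Z^{2^e})\cong R(-(n+1)2^e)\oplus R(-(n+2)2^e),$$
a free module of rank two. Plugging $|n_1-n_2|=2^e$ into Lemma \ref{syztr} over $S$ (where the flat map preserves degrees) gives $\HKF(I,S,2^e)=\tfrac{1}{4}\bigl((2n+2)(8n-1)+(2n+2)\bigr)2^{2e}=4n(n+1)2^{2e}$, so after dividing by $2n(n+1)$ one obtains $2\cdot2^{2e}=2p^{2e}$. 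Here the determinant contribution $d\,|n_1-n_2|^2/4$ is exactly what lifts the leading coefficient from $2-\tfrac{1}{4n}$ to $2$.

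The genuine difficulty is not in this arithmetic but in the hypothesis it consumes: for the odd primes dividing $n$ (the modular case $p\mid 4n$, $p$ odd) one must still know that $\Syz_R(X^{p^e},Y^{p^e},Z^{p^e})$ is one of the listed maximal Cohen-Macaulay modules $M_m$, so that the pull-back bookkeeping of Table \ref{tabpbd} and hence the periodicity isomorphism remain legitimate. As sketched before the theorem, this rests on Herzog's correspondence together with the Leuschke-Wiegand observation that $R$ is a direct summand of $k[u,v]$ whose indecomposable $R$-summands agree with those of the non-modular case; granting it, the computation above runs unchanged for all odd $p$. The only other point demanding care is the one already flagged in the first paragraph, namely that Lemma \ref{syznontr} may be invoked on the singular curve $C$.
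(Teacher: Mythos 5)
Your proposal is correct and follows essentially the same route as the paper: it takes the periodicity isomorphism $\Syz_R(X^{p^e},Y^{p^e},Z^{p^e})\cong\Syz_R(X,Y^{(r+1)/2},Z)(m)$ established before the theorem, feeds it into Lemma \ref{syznontr} via the standard-graded avatar, and handles $p=2$ by the explicit free splitting and Lemma \ref{syztr}, exactly as the paper does (only more explicitly, since the paper compresses the arithmetic to one sentence). Your computed values $Q(n+1,2,n)=8n-1$, $Q(n+1,r+1,n)=4n(r+1)-r^2$ and $D=n(n+1)(r+1)$ all check out and reproduce the stated formula, and your flagging of the singular point of $\Proj(S)$ and of the modular case $p\mid 4n$ matches the caveats the paper itself relies on.
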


Note that our explicit knowledge of the Hilbert-Kunz functions enables us to compute the $F$-signature functions by using \cite[Theorem 11]{2thmsmcm}. 
If $R$ is of type $D$ or $E$, one has
$$\FS(R,p^e)=2p^{2e}-\HKF(R,p^e).$$

\bibliographystyle{alpha} 
\bibliography{bibfile}

\end{document}